\documentclass[12pt]{amsart}
\usepackage{amscd,amssymb,verbatim,bbm,comment}
\newtheorem{thm}{Theorem}[section]
\newtheorem{cor}[thm]{Corollary}
\newtheorem{lem}[thm]{Lemma}
\newtheorem{prop}[thm]{Proposition}

\newtheorem{rem}[thm]{Remark}
\newtheorem*{example*}{Example}

\numberwithin{equation}{section}

\textwidth = 440pt
\textheight = 670pt
\hoffset = -47pt
\voffset = -16pt

\newcommand{\ep}{\varepsilon}
\newcommand{\al}{\alpha}

\newcommand{\Om}{\Omega}
\newcommand{\Sig}{\Sigma}

\newcommand{\abs}[1]{\lvert#1\rvert}

\newcommand{\bigabs}[1]{\bigl\lvert#1\bigr\rvert}

\newcommand{\Bigabs}[1]{\Bigl\lvert#1\Bigr\rvert}

\newcommand{\bs}{\backslash}

\newcommand{\ol}{\overline}

\newcommand{\spn}{\operatorname{span}}

\newcommand{\co}{\operatorname{co}}
\newcommand{\so}{\operatorname{so}}

\newcommand{\sgn}{{\operatorname{Sign}}\,}
\newcommand{\var}{{\operatorname{var\,}}}

\DeclareMathOperator{\Span}{Span}

\newcommand{\N}{{\mathbb N}}
\newcommand{\R}{{\mathbb R}}

\newcommand{\Q}{{\mathbb Q}}

\newcommand{\E}{{\mathbb E}}
\newcommand{\m}{{\mathrm m}}
\newcommand{\ZL}{{\mathbb{L}^0}}
\newcommand{\OL}{{\mathbb{L}^1}}
\newcommand{\bP}{{\mathbb P}}

\newcommand{\cB}{{\mathcal B}}

\newcommand{\cL}{{\mathcal L}}
\newcommand{\cS}{{\mathcal S}}
\newcommand{\cA}{{\mathcal A}}
\newcommand{\cX}{{\mathcal X}}

\newcommand{\cU}{{\mathcal U}}
\newcommand{\cV}{{\mathcal V}}
\newcommand{\cW}{{\mathcal W}}

\newcommand{\cK}{{\mathcal K}}
\newcommand{\cC}{{\mathcal C}}

\newcommand{\one}{\mathbbm{1}}

\title{On local convexity in $\mathbb{L}^0$ and switching probability measures}

\author[N.~Gao]{Niushan Gao}
\address{Department of Mathematics, Ryerson University, 350 Victoria Street, Toronto, Canada M5B2K3}
\email{niushan@ryerson.ca}

\author[D.~Leung]{Denny H.~Leung}
\address{Department of Mathematics, National University of Singapore, Singapore 117543}
\email{matlhh@nus.edu.sg}

\author[F.~Xanthos]{Foivos Xanthos}
\address{Department of Mathematics, Ryerson University, 350 Victoria Street, Toronto, Canada M5B2K3}
\email{foivos@ryerson.ca}

\thanks{The first and third authors acknowledge support of NSERC Discovery Grants. The second author is partially supported by AcRF grant R-146-000-242-114.}

\keywords{Equivalent probability measures, uniformly integrable, locally convex, uniformly locally convex-solid, convergence in probability, measure-free}

\subjclass[2010]{46A55, 46E30, 46A16, 	 60A10,  46N30}

\date{\today}

\begin{document}

\maketitle
\begin{abstract}In the paper, we investigate the following fundamental question. For a set $\cK$ in $\ZL(\bP)$, when does there exist an equivalent probability measure $\Q$ such that $\cK$ is uniformly integrable in $\OL(\Q)$. Specifically, let $\cK$ be a convex bounded positive set in $\OL(\bP)$. Kardaras \cite{K} asked the following two questions: (1) If the relative $\ZL(\bP)$-topology is locally convex on $\cK$, does there exist $\Q\sim \bP$ such that the $\ZL(\Q)$- and $\OL(\Q)$-topologies agree on ${\cK}$? (2) If $\cK$ is closed in the $\ZL(\bP)$-topology and there exists $\Q\sim \bP$ such that the $\ZL(\Q)$- and $\OL(\Q)$-topologies agree on $\cK$, does there exist $\Q'\sim \bP$ such that $\cK$ is $\Q'$-uniformly integrable? In the paper, we show that, no matter $\cK$ is positive or not, the first question has a negative answer in general and  the second one has a positive answer. In addition to answering these questions, we establish probabilistic and
topological characterizations of existence of $\Q\sim\bP$ satisfying these desired properties. We also investigate the peculiar effects of $\cK$ being positive.
\end{abstract}

\section{Introduction}\label{intro}
The Fundamental Theorem of Asset Pricing establishes the prominent importance of working under an equivalent probability measure $\Q$ relative to the original physical probability measure $\bP$.
It is henceforth of great interest to study  how certain analytical and probabilistic properties of a set behave  when the underlying probability measure is switched from one to another.
This line of research can be traced back to the remarkable work Brannath and Schachermayer \cite{BS} and is significantly expanded in  two recent papers  Kardaras and \v{Z}itkovi\'{c} \cite{KZ} and   Kardaras \cite{K}. It turns that local convexity of the topology of convergence in probability plays an important role.

Throughout the paper, let $(\Om,\Sig,\bP)$ stand for a nonatomic probability space. Let $\ZL(\bP):=\ZL(\Om,\Sig,\bP)$ be the space of all random variables modulo a.s.-equality. By the $\ZL(\bP)$-topology, we refer to the topology of convergence in the probability measure $\bP$.  A probability measure $\Q$ on $(\Om,\Sig)$ is {\em equivalent to} $\bP$ if $\Q$ and $\bP$ are mutually absolutely continuous with respect to one another. In this case, we write $\Q \sim \bP$. It is well-known that if $\Q\sim \bP$, then $\ZL(\Q)=\ZL(\bP)$ and the $\ZL(\Q)$- and $\ZL(\bP)$-topologies coincide.

Given a sequence $(X_n)$ in $\ZL(\bP)$, a {\em forward convex combination (FCC)} of $(X_n)$ is a sequence $(Y_k)$ such that $Y_k \in \co(X_n)^\infty_{n=k}$ for each $k\in \N$. Here $\co(\cA)$ is the convex hull of a set $\cA$.  For a convex set $\cK$ in $\ZL(\bP)$ and $X\in \cK$, we say that the (relative) $\ZL(\bP)$-topology on $\cK$ is \emph{locally convex at $X$} if for any $\ZL(\bP)$-neighborhood $\mathcal{U}$ of $0$, there exists a convex neighborhood $\cW$ of $X$ in the relative  $\ZL(\bP)$-topology on $\cK$ such that $\cW\subset (X+\cU)\cap \cK$,
or equivalently, there exists a convex subset $\cW' $ of $\cU$ containing $0$ such that $(X+\cW')\cap \cK $ is a neighborhood of $X$ in the relative  $\ZL(\bP)$-topology on $\cK$ (e.g., taking $\cW'=\cW-X$, and $\cW=(X+\cW')\cap \cK$, conversely). It is  easily seen to be equivalent to that if $(X_n)$ is a sequence in $\cK$ that converges to $X$ in probability, then every FCC of $(X_n)$ also converges to $X$ in probability.
We say that the $\ZL(\bP)$-topology is \emph{locally convex on $\cK$} if it is locally convex at every point of $\cK$.

The following theorem is part of the main result  in \cite{KZ}.

\begin{thm}[\cite{KZ}]\label{t1}
Let $(X_n)$ be a sequence in $\mathbb{L}^0_+(\bP)$ that converges in probability to a random variable $X\in \mathbb{L}^0_+(\bP)$.
The following are equivalent.
\begin{enumerate}
\item\label{t11} Every FCC of $(X_n)$ converges to $X$ in probability.
\item\label{t12} The $\ZL(\bP)$-topology is locally convex on the set $\cK = \co\big((X_n)_{n=1}^\infty\cup \{X\}\big)$.
\item\label{t13} The $\ZL(\bP)$-topology is locally convex on the set $\ol{\cK}$, where the closure is  taken in $\ZL(\bP)$ with respect to the $\ZL(\bP)$-topology.
\item\label{t14} There exists $\Q\sim \bP$ such that the $\ZL(\Q)$- and $\OL(\Q)$-topologies agree on $\ol{\cK}$.
\end{enumerate}
\end{thm}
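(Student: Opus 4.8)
The plan is to prove the cycle $(4)\Rightarrow(3)\Rightarrow(2)\Rightarrow(1)\Rightarrow(4)$; the first three implications are routine and all the content is in $(1)\Rightarrow(4)$. For $(4)\Rightarrow(3)$: the space $\OL(\Q)$ is normed, so its topology is locally convex, hence so is the relative topology it induces on the convex set $\ol{\cK}$; this relative topology equals the relative $\ZL(\Q)$-topology on $\ol{\cK}$ by hypothesis, and $\ZL(\Q)=\ZL(\bP)$ as topological vector spaces since $\Q\sim\bP$, so the relative $\ZL(\bP)$-topology on $\ol{\cK}$ is locally convex. For $(3)\Rightarrow(2)$: local convexity passes to subsets, since a convex relative $\ZL(\bP)$-neighborhood $\cW$ of $X$ in $\ol{\cK}$ with $\cW\subseteq(X+\cU)\cap\ol{\cK}$ yields the convex relative neighborhood $\cW\cap\cK$ of $X$ in $\cK$ with $\cW\cap\cK\subseteq(X+\cU)\cap\cK$, and $\cK\subseteq\ol{\cK}$. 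For $(2)\Rightarrow(1)$: apply local convexity at $X\in\cK$; given a $\ZL(\bP)$-neighborhood $\cU$ of $0$ choose a convex $\cW'\subseteq\cU$ with $0\in\cW'$ so that $(X+\cW')\cap\cK$ is a relative neighborhood of $X$ in $\cK$; since $X_n\to X$ in probability and $X_n\in\cK$, we get $X_n-X\in\cW'$ for all large $n$, and so for any FCC $(Y_k)$ of $(X_n)$ and all large $k$, $Y_k-X=\sum_{n\ge k}\lambda_n(X_n-X)$ is a convex combination of vectors in $\cW'$, whence $Y_k-X\in\cW'\subseteq\cU$ and $Y_k\to X$.

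The main step $(1)\Rightarrow(4)$ is where an equivalent measure must be produced, and I would set it up with three reductions. First, $(1)$ is equivalent to the statement that the convex tails shrink to $X$: for every $\ZL(\bP)$-neighborhood $\cU$ of $0$ there is $N$ with $\co(X_n)^\infty_{n=N}\subseteq X+\cU$ (if not, choosing $Y_N\in\co(X_n)^\infty_{n=N}\setminus(X+\cU)$ defines an FCC of $(X_n)$ that does not converge to $X$). Second, $\ol{\cK}$ is bounded in probability: for a bounded-in-probability set $\cA$ one has $\sup_{Y\in\co(\cA)}\E_\bP[\min(c\abs{Y},1)]\le\sup_{Z\in\cA}\E_\bP[\min(c\abs{Z},1)]$ by sub-additivity and positive homogeneity of $t\mapsto\min(t,1)$ along convex combinations, and $Z\mapsto\E_\bP[\min(c\abs{Z},1)]$ is $\ZL(\bP)$-continuous, so this bound survives taking closures, while $\{X\}\cup\{X_n\}$ is bounded in probability because it converges. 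Third, a Koml\'os-type lemma (e.g.\ the Delbaen--Schachermayer lemma) applied to $(X_n)$ gives an FCC $(g_n)$, $g_n\in\co(X_k)^\infty_{k=n}$, with $g_n\to g$ almost surely for some $g\in\mathbb{L}^0_+(\bP)$; by $(1)$ this limit is $X$, so $(X_n)$ admits an FCC converging to $X$ almost surely, and in particular $\hat g:=\sup_n g_n$ is almost surely finite, $\hat g\in\mathbb{L}^0_+(\bP)$, and dominates that FCC.

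It then remains to construct $\Q=h\cdot\bP$, $h>0$ with $\E_\bP h=1$, for which $\ol{\cK}\subseteq\OL(\Q)$ and, by Vitali's convergence theorem, every $\ZL(\bP)$-convergent sequence drawn from $\ol{\cK}$ is $\Q$-uniformly integrable; this gives $(4)$. A useful simplification here is that the uniform-integrability demand over the uncountable set $\ol{\cK}$ reduces to the countable family $\{X\}\cup\{X_n\}$, because $Z\mapsto(Z-\lambda)^+$ is convex and nonnegative, so for any $h\ge0$
\[
\sup_{Z\in\ol{\cK}}\E_\bP\bigl[h(Z-\lambda)^+\bigr]=\max\Bigl(\E_\bP\bigl[h(X-\lambda)^+\bigr],\ \sup_n\E_\bP\bigl[h(X_n-\lambda)^+\bigr]\Bigr),
\]
with ``$\le$'' on $\cK$ by convexity and extended to $\ol{\cK}$ by Fatou's lemma (and similarly with $Z$ in place of $(Z-\lambda)^+$). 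The construction of $h$ is the step I expect to be the main obstacle: one must simultaneously keep $h$ strictly positive and $\bP$-integrable, force all of $\ol{\cK}$ --- not just the sequence $(X_n)$ --- into $\OL(\Q)$ (using that each $X_k$ is dominated by a multiple of $\hat g$ and that $\ol{\cK}$ is bounded in probability), and make $h$ small enough on the ``escaping mass'' regions $\{X_n>M\}$, whose $\bP$-probabilities are uniformly small, that the $\OL(\Q)$-mass living over such regions is negligible uniformly along an arbitrary probability-convergent sequence in $\ol{\cK}$. The tools for balancing these requirements are the shrinking-tails property, the boundedness in probability, the almost surely convergent FCC from the third reduction, and the collapse to the countable family above; a natural implementation is a weighted sum $h=\sum_k 2^{-k}h_k$ with each $h_k\sim\bP$ tailored to the $k$-th truncation level. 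Once $(4)$ holds the cycle is complete.
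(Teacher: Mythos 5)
Your cycle $(4)\Rightarrow(3)\Rightarrow(2)\Rightarrow(1)$ is fine, and the preliminary reductions for $(1)\Rightarrow(4)$ (shrinking convex tails, boundedness in probability of $\ol{\cK}$, a Koml\'os-type FCC converging a.s.\ to $X$, and the reduction of the uniform-integrability estimate over $\ol{\cK}$ to the countable family $\{X\}\cup\{X_n\}$ via convexity of $t\mapsto (t-\lambda)^+$ and Fatou) are all correct observations. But the proposal stops exactly where the theorem lives: you never construct the density $h$, and you say yourself that this is ``the main obstacle.'' Listing the available tools and suggesting ``a weighted sum $h=\sum_k 2^{-k}h_k$ with each $h_k$ tailored to the $k$-th truncation level'' is a plan, not a proof; nothing in the reductions you have established makes the existence of such an $h$ evident, and this is precisely the nontrivial content of $(1)\Rightarrow(4)$. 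In addition, one of the ingredients you intend to lean on is unjustified: from $g_n\in\co(X_k)_{k\ge n}$ and $\hat g=\sup_n g_n$ it does not follow that ``each $X_k$ is dominated by a multiple of $\hat g$'' --- an individual $X_k$ may appear in the $g_n$'s only with arbitrarily small weight, or not at all, so $\hat g$ gives no pointwise control of the $X_k$'s.

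For comparison, the paper's route to $(1)\Rightarrow(4)$ does not attempt a direct construction of $h$ from the sequence. After the Brannath--Schachermayer reduction to $\cK$ bounded in $\OL(\bP)$, it sets $R_n=X_n-X$ and works with the circled set $\cL=\{\sum a_kR_k:\sum|a_k|\le 1\}\supseteq \cK-X$. Positivity enters through Lemma \ref{l5.1} (every FCC of $(\abs{X_n-X})$ tends to $0$ in probability); Proposition \ref{p4.6}\eqref{p4.62} then gives local convex-solidity of the $\ZL(\bP)$-topology on $\cL$ at $0$, Lemma \ref{l5.5} upgrades this to uniform local convex-solidity on all of $\cL$, and Theorem \ref{lcsolid} --- whose proof is the Hahn--Banach separation argument of Proposition \ref{sep2} combined with the exhaustion argument of Proposition \ref{p2.1}, and which is where the equivalent measure $\Q$ is actually produced --- yields $\Q\sim\bP$ with the $\ZL(\Q)$- and $\OL(\Q)$-topologies agreeing on $\cL$; finally Proposition \ref{p4.9} shows $\cL$ is $\ZL(\bP)$-compact, so $\ol{\cK}\subseteq X+\cL$ and $(4)$ follows. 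To complete your proposal you would either have to carry out an honest construction of $h$ with the stated properties (essentially redoing the Kardaras--\v{Z}itkovi\'c argument) or invoke machinery of the above kind; as written, the implication $(1)\Rightarrow(4)$ is not established.
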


Theorem~\ref{t1} is extended in Kardaras \cite{K}. We say that a set $\cA$ in $\mathbb{L}^0_+(\bP)$ is {\em positive solid} if $Y\in \cA$ whenever there exists $X\in \cA$ such that $0\leq Y\leq X$.
A subset $\cA $ in $\mathbb{L}^0_+(\bP)$ is {\em bounded in probability} if $\sup_{X\in \cA}\bP(\abs{X} >n)\longrightarrow 0$ as $n\longrightarrow \infty$.

\begin{thm}[\cite{K}]\label{t2}
Let $\cK$ be a convex, positive solid set in $\mathbb{L}^0_+(\bP)$ that is bounded in probability.  The following are equivalent.
\begin{enumerate}
\item\label{t22} The $\ZL(\bP)$-topology on $\cK$ is locally convex at $0$.
\item\label{t23} The $\ZL(\bP)$-topology is locally convex on $\cK$.
\item\label{t24} There exists $\Q\sim \bP$ such that the $\ZL(\Q)$- and $\OL(\Q)$-topologies agree on $\cK$.
\item\label{t25} There exists $\Q\sim \bP$ such that $\cK$ is $\Q$-uniformly integrable.
\end{enumerate}
\end{thm}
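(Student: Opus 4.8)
The plan is to treat $(4)\Rightarrow(3)\Rightarrow(2)\Rightarrow(1)$ as the soft part and concentrate on $(1)\Rightarrow(4)$. For $(4)\Rightarrow(3)$: if $\cK$ is $\Q$-uniformly integrable then, by Vitali's convergence theorem, a sequence in $\cK$ converging in $\Q$-probability --- equivalently in $\bP$-probability, since $\Q\sim\bP$, that is in the relative $\ZL(\Q)=\ZL(\bP)$-topology --- converges in $\OL(\Q)$; the reverse implication always holds, so the relative $\ZL(\bP)$- and $\OL(\Q)$-topologies on $\cK$ have the same convergent sequences and, being metrizable, coincide on $\cK$. For $(3)\Rightarrow(2)$: fix $X\in\cK$ and a $\ZL(\bP)$-neighbourhood $\cU$ of $0$; then $(X+\cU)\cap\cK$ is a relative $\OL(\Q)$-neighbourhood of $X$ in $\cK$, and since the $\OL(\Q)$-topology is normable, hence locally convex, and $\cK$ is convex, there is a convex $\OL(\Q)$-neighbourhood $\cV$ of $0$ with $(X+\cV)\cap\cK\subseteq(X+\cU)\cap\cK$; the set $(X+\cV)\cap\cK$ is convex and a relative $\ZL(\bP)$-neighbourhood of $X$ in $\cK$, so it witnesses local convexity at $X$. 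Finally $(2)\Rightarrow(1)$ is immediate.

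For $(1)\Rightarrow(4)$ I would argue by contraposition: assuming $\cK$ is \emph{not} $\Q$-uniformly integrable for \emph{any} $\Q\sim\bP$, I would exhibit a sequence $(X_n)$ in $\cK$ with $X_n\to0$ in probability having a forward convex combination that does not converge to $0$ in probability, so the $\ZL(\bP)$-topology is not locally convex at $0$ on $\cK$. The sequence I aim for is a \emph{sweeping} one assembled from rectangle-type blocks $c\,\one_A$. Positive solidity puts $c\,\one_A$ in $\cK$ whenever it lies below a member of $\cK$, and boundedness in probability forces the admissible sets $A$ to shrink as the heights $c$ grow, so the blocks themselves go to $0$ in probability. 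The construction runs in successive finite stages: at stage $\ell$ one partitions a fixed positive-measure set $C$ into $r_\ell\to\infty$ equal pieces and places in each piece a block of height $\approx r_\ell\delta'$ covering at least half of it; averaging the $r_\ell$ blocks of a stage produces a function $\ge\delta'$ on a set of $\bP$-measure $\ge\frac12\bP(C)$. Ordering so that stage $\ell$ uses only indices beyond $N_\ell\uparrow\infty$ and letting $Y_k$ be the average of the first stage lying entirely past index $k$ gives a forward convex combination $(Y_k)$ of $(X_n)$ that stays $\ge\delta'$ on a set of $\bP$-measure $\ge\frac12\bP(C)$, hence does not converge to $0$ in probability.

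Two ingredients make this work, and they are precisely where the hypothesis that \emph{every} equivalent measure fails must be used (it is far stronger than failure of $\bP$-uniform integrability alone). The first is a \emph{localization}: one may pass to a positive-measure set $C\subseteq\Om$ such that $\cK_B:=\{X\one_B:X\in\cK\}$ is not uniformly integrable under any measure equivalent to $\bP|_B$, for \emph{every} $B\subseteq C$ with $\bP(B)>0$. This rests on the observation that if $\cK$ were uniformly integrable on each piece of a countable partition of $\Om$, then gluing the corresponding densities with weights decaying fast enough that the resulting $L^1$-bounds are summable would make $\cK$ itself uniformly integrable; thus the family of ``good'' sets is a $\sigma$-ideal not containing $\Om$, and it has a maximal element of measure $<1$ whose complement is the desired $C$. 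The second, which I expect to be the main obstacle, is an \emph{extraction lemma}: from this reweighting- and size-invariant failure one must produce, \emph{uniformly over small $B\subseteq C$}, a member of $\cK_B$ that is $\ge\delta/\bP(B)$ on a fixed fraction of $B$ --- exactly the rectangle block used above --- with the defect $\delta$ not degenerating as $\bP(B)\to0$. Extracting this uniform, scale-free structure from the bare ``for every $\Q$'' statement, and controlling the attendant truncation errors (the uniform tail function $t\mapsto\sup_{X\in\cK}\bP(X>t)$ may decay arbitrarily slowly, so members of $\cK$ need not even be $\bP$-integrable), is the technical heart; positive solidity is used repeatedly to truncate members of $\cK$ from above and to restrict their supports while staying in $\cK$.

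An alternative to the contrapositive that I would keep in reserve is a direct construction of $\Q$. Local convexity at $0$, together with positive solidity and boundedness in probability, forces for each $k$ a \emph{convex} relative neighbourhood $\cW_k$ of $0$ in $\cK$ containing every tail $X\one_{\{X>M_k\}}$, $X\in\cK$, for a suitable threshold $M_k$; one then wants a density $Z=d\Q/d\bP$ with $\sup\{\E_\bP[ZY]:Y\in\cW_k\}\to0$, which yields $\Q$-uniform integrability of $\cK$ at once. Since $\ZL(\bP)$ over a nonatomic space carries essentially no nonzero continuous linear functionals, this cannot be done by Hahn--Banach separation in $\ZL(\bP)$ itself and appears to require a minimax argument in a weighted auxiliary $L^1$-space; I regard this route as the less robust of the two.
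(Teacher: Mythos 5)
You handle the soft implications correctly, but the heart of the theorem, \eqref{t22}$\implies$\eqref{t25}, is not proved: your contrapositive plan rests entirely on the ``extraction lemma'' that you yourself flag as the main obstacle, and that step is not a routine one. From the localized hypothesis (for every $B\subseteq C$ of positive measure, $\cK_B$ fails to be uniformly integrable under any equivalent measure) all you can extract is: for some $\delta>0$ and every $M$ there is $X\in\cK$ with $\E_\bP\big[X\one_B\one_{\{X>M\}}\big]>\delta$. The set $\{X>M\}\cap B$ carrying this mass may have arbitrarily small measure --- nothing forces it to cover ``at least half'' of a prescribed piece --- and the only tool you have for gaining coverage is averaging several such $X$'s, which lowers the height in exact proportion to the number of functions averaged, so the stage-$\ell$ blocks of height $\approx r_\ell\delta'$ covering half of each of the $r_\ell$ pieces, with $\delta'$ uniform as the pieces shrink, are not available from the hypothesis. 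Positive solidity only lets you replace $X$ by $c\,\one_A$ with $A\subseteq\{X\geq c\}$, which does not address the measure problem. Hence the forward convex combination that is supposed to stay $\geq\delta'$ on a set of measure $\geq\tfrac12\bP(C)$ is unsubstantiated, and with it the whole implication. (Your localization step is fine; it is essentially the exhaustion argument and matches Lemma \ref{l2.0} / Proposition \ref{p2.2} of the paper.)

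For comparison, the paper's proof of \eqref{t22}$\implies$\eqref{t25} is direct rather than contrapositive: after reducing WLOG to $\cK$ bounded in $\mathbb{L}^1_+(\bP)$ (Brannath--Schachermayer), local convexity at $0$ plus positivity is upgraded to the topology being locally convex-\emph{solid} at $0$ via the Kardaras--\v{Z}itkovi\'{c} lemma (Lemma \ref{l5.1}) that convergence of all FCCs of $(X_n)$ to $X$ forces convergence of all FCCs of $(\abs{X_n-X})$ to $0$; then a Hahn--Banach separation carried out in $\OL(\bP)$ --- not in $\ZL(\bP)$ --- using the Minkowski functional of a convex solid absorbing set assembled from the solid neighbourhoods, together with an exhaustion, produces $\Q\sim\bP$ such that every sequence in $\cK$ converging to $0$ in probability converges in $\OL(\Q)$ (Proposition \ref{sep2}, Theorem \ref{lcsolid}); finally the Koml\'{o}s-based criterion of Proposition \ref{p2.2}, combined with the positive-solidity observation $\tfrac12\abs{X_n-X_m}\leq\tfrac12(X_n+X_m)\in\cK$, converts this into uniform integrability under an equivalent measure. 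So your dismissal of a separation-based route on the grounds that $\ZL(\bP)$ has trivial dual is premature: after the $\mathbb{L}^1$-boundedness reduction the separation takes place in $\OL(\bP)$, and that is precisely the viable path; no minimax argument is needed.
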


Connections of these results to Mathematical Finance and Economics are also made in \cite{KZ,K}.
We refer to the references therein for further connections.

Clearly, \eqref{t25}$\implies$\eqref{t24}$\implies$\eqref{t23} in Theorem~\ref{t2} hold for an arbitrary set $\cK$ in $\ZL(\bP)$. The following example, however, shows that Conditions \eqref{t24} and \eqref{t25} do not necessarily agree for any convex sets in $\mathbb{L}^0_+(\bP)$ that are bounded in probability.

\begin{example*}[\cite{K}] Let $\cK= \big\{X\in \mathbb{L}^0_+(\bP): \E[X] =1\big\}$.  Then $\cK$ is a convex set in $\mathbb{L}^0_+(\bP)$ that is bounded in probability.  It is well-known that the $\ZL(\bP)$- and $\OL(\bP)$-topologies agree on $\cK$.  However, there is no $\Q\sim \bP$ such that $\cK$ is $\Q$-uniformly integrable.
\end{example*}

In view of these results, the following questions were raised in \cite{K}.
Let $\cK$ be a convex  set in $\mathbb{L}^0_+(\bP)$ that is bounded in probability.
\begin{enumerate}
\item[(Q1+)] Is it true that if the $\ZL(\bP)$-topology is locally convex on $\cK$, then there exists $\Q\sim \bP$ such that the $\ZL(\Q)$- and $\OL(\Q)$-topologies agree on $\cK$?
\item[(Q2+)] Assume that $\cK$ is also closed in $\ZL(\bP)$ with respect to the $\ZL(\bP)$-topology.  If there exists $\Q\sim \bP$ such that the $\ZL(\Q)$- and $\OL(\Q)$-topologies agree on $\cK$, does there exist $\Q'\sim \bP$ such that $\cK$ is $\Q'$-uniformly integrable?
\end{enumerate}
The ``+'' signs in the labels above remind us that these questions concern {\em positive} sets.

Brannath and Schachermayer \cite{BS} showed that if $\cK$ is a convex positive set in $\ZL(\bP)$ that is bounded in probability, then there exists $\bP'\sim \bP$ such that $\cK$ is bounded in $\OL(\bP')$.  Thus we may  assume that $\cK$ is bounded in $\OL(\bP)$ in the first place.
Hence we may ask the preceding questions for arbitrary convex bounded sets in $\OL(\bP)$.  We will refer to these questions as (Q1) and (Q2), respectively. The validity of Theorem \ref{t1} for suitable nonpositive sequences was alluded to in \cite[Remark 1.6]{KZ}.

\bigskip

We now describe the contributions of this paper with regard to the questions raised above.
First, it is shown that (Q2) and hence (Q2+) have positive solutions.
Precisely,

\begin{thm}\label{t3}
Let $\cK$ be a convex bounded subset of $\OL(\bP)$ that is  closed in the $\ZL(\bP)$-topology.  The following are equivalent.
\begin{enumerate}
\item There exists $\Q\sim \bP$ such that the $\ZL(\Q)$- and $\OL(\Q)$-topologies agree on $\cK$.
\item There exists $\Q\sim \bP$ such that $\cK$ is $\Q$-uniformly integrable.
\end{enumerate}
\end{thm}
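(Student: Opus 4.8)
The plan is to prove the nontrivial implication $(1)\implies(2)$; the converse is trivial since $\Q$-uniform integrability of $\cK$ immediately forces the $\ZL(\Q)$- and $\OL(\Q)$-topologies to coincide on $\cK$ (a standard Vitali-type argument). So assume $\Q\sim\bP$ is such that the two topologies agree on $\cK$. Replacing $\bP$ by $\Q$ from the outset — which is harmless, since the conclusion asks only for \emph{some} $\Q'\sim\bP$, and the hypotheses (convex, bounded in $\OL$, closed in $\ZL$) are all invariant under passing to an equivalent measure — we may assume the $\ZL(\bP)$- and $\OL(\bP)$-topologies already agree on $\cK$. Our goal is then to produce $\Q'\sim\bP$ making $\cK$ $\Q'$-uniformly integrable. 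The natural strategy is to extract from the topological hypothesis a quantitative smallness condition and then feed it into an exhaustion/Halmos--Savage-type construction of the density $d\Q'/d\bP$.

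The first step is to reformulate the hypothesis sequentially: since $\cK$ is convex and bounded in $\OL(\bP)$, the agreement of the two topologies on $\cK$ means precisely that whenever $X_n\in\cK$ and $X_n\to X$ in $\bP$-probability with $X\in\cK$ (using that $\cK$ is $\ZL$-closed, so the limit stays in $\cK$), one in fact has $X_n\to X$ in $\OL(\bP)$, i.e. $\E\abs{X_n-X}\to 0$. Equivalently — and this is the form I would actually use — for every $\ep>0$ there is $\delta>0$ such that $\bP(A)<\delta$ implies $\sup_{X\in\cK}\int_A\abs{X}\,d\bP<\ep$; that is, $\cK$ is a $\bP$-uniformly integrable family, \emph{except} possibly for the lack of $\OL(\bP)$-boundedness being replaced by genuine uniform absolute continuity of the integrals. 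Wait — $\cK$ is already $\OL(\bP)$-bounded by hypothesis, so the real content to extract is exactly the uniform absolute continuity $\lim_{\bP(A)\to 0}\sup_{X\in\cK}\int_A\abs{X}\,d\bP = 0$. If one can show this, then $\cK$ is already $\bP$-uniformly integrable and we are done with $\Q'=\bP$. The subtlety is that topological agreement on $\cK$ does \emph{not} directly give uniform absolute continuity — it only controls sequences converging \emph{within} $\cK$, whereas uniform absolute continuity is a statement about \emph{all} of $\cK$ simultaneously against shrinking sets. Bridging this gap is the crux.

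The key step, therefore, is a compactness/contradiction argument to upgrade the pointwise-sequential hypothesis to the uniform one, at the cost of switching the measure. Suppose $\cK$ is \emph{not} $\Q'$-uniformly integrable for any $\Q'\sim\bP$. I would run an exhaustion argument: build a sequence $X_k\in\cK$ and disjoint (or nested) sets $A_k$ with $\bP(A_k)\to 0$ but $\int_{A_k}\abs{X_k}\,d\bP\geq c>0$, chosen so that the ``bad mass'' escapes in a way that no reweighting of $\bP$ can tame — this is where one needs the full strength of the failure of (2) for \emph{every} equivalent measure, typically via a Halmos--Savage exhaustion producing a localizing sequence of sets along which uniform integrability fails uniformly in the choice of density. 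Then, after passing to a forward convex combination (legitimate because $\cK$ is convex, so FCCs stay in $\cK$), one uses the $\ZL$-closedness and $\OL$-boundedness to find a $\ZL$-convergent subsequence of these convex combinations with limit $X\in\cK$; the topological agreement forces $\OL$-convergence, contradicting the persistent bad mass $c$. The engineering of the sets $A_k$ and the convex-combination coefficients so that the escaping mass survives averaging — rather than being smeared out to zero — is the delicate point, and it is essentially the same mechanism by which one proves a bounded-in-$\OL$ set that is \emph{not} uniformly integrable contains a sequence equivalent to the $\ell^1$-basis (a Kadec--Pełczyński / Rosenthal-type subsequence splitting). Once such an $\ell^1$-like sequence is isolated inside $\cK$, its closed convex hull cannot have the two topologies agreeing on it, giving the contradiction.

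The main obstacle, as indicated, is precisely this passage from sequential local behavior to a global uniform-integrability statement while simultaneously being allowed to (and in fact needing to) change the measure: a naive argument proves only that $\cK$ has no ``bad sequence converging in $\cK$,'' which is weaker than what we want. The device that makes it work is the combination of (i) convexity of $\cK$ (so forward convex combinations are available and stay in $\cK$), (ii) $\ZL$-closedness (so limits of $\ZL$-convergent sequences land in $\cK$, letting the topological hypothesis bite), and (iii) the Brannath--Schachermayer reduction already invoked in the excerpt (so we may assume $\OL$-boundedness throughout and need not worry about mass escaping to infinity in amplitude, only about concentration on small sets). I expect the proof to route through a lemma of independent interest: \emph{a convex, $\OL(\bP)$-bounded, $\ZL(\bP)$-closed set on which the two topologies agree is $\Q$-uniformly integrable for a suitable $\Q\sim\bP$}, proved by the subsequence-splitting/$\ell^1$-dichotomy argument sketched above, after which Theorem~\ref{t3} is immediate.
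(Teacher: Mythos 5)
Your proposal is correct in outline and follows essentially the same route as the paper: the nontrivial implication is handled by localizing via an exhaustion (Halmos--Savage-type) construction of the density, and the crux — that failure of uniform integrability on a set of positive measure yields a sequence in $\cK$ equivalent to the $\ell^1$-basis there (subsequence splitting, i.e.\ \cite[Theorem 5.2.9]{AK:06}), whose Koml\'os/Ces\`aro averages are Cauchy in probability but not in $\OL$, contradicting the agreement of the topologies together with $\ZL$-closedness — is exactly the mechanism of Propositions \ref{p2.1} and \ref{p2.2} and Corollary \ref{c2.3}. One caveat: your ``WLOG $\Q=\bP$'' reduction invokes the claim that $\OL$-boundedness is invariant under an equivalent change of measure, which is false in general; it is harmless here only because agreement of the $\ZL(\Q)$- and $\OL(\Q)$-topologies on the convex, bounded-in-probability set $\cK$ itself forces $\OL(\Q)$-boundedness (if $\E_\Q\abs{X_n}\to\infty$ with $X_n\in\cK$, then $Y_n=\tfrac{1}{\E_\Q\abs{X_n}}X_n+\bigl(1-\tfrac{1}{\E_\Q\abs{X_n}}\bigr)X_0\to X_0$ in probability while $\E_\Q\abs{Y_n-X_0}\not\to 0$), or one can simply avoid switching the reference measure and translate the $\Q$-hypothesis into $\bP$-statements via the density, as the paper does.
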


Recall that a set $\cK$ in a vector lattice is {\em solid} if $Y\in \cK$ whenever there exists $X\in \cK$ such that $\abs{Y}\leq \abs{X}$.  Let $\cK$ be a convex bounded set in $\OL(\bP)$ and let $\mathcal{S}$ be a nonempty subset of $\cK$.
We say that the $\ZL(\bP)$-topology on $\cK$ is {\em uniformly locally convex-solid on $\mathcal{S}$} if for any $\ZL(\bP)$-neighborhood $\mathcal{U}$ of $0$, there exists a convex-solid set $\cW\subseteq \cU$ such that  $(X+\cW)\cap \cK$ is a neighborhood of $X$ in the relative $\ZL(\bP)$-topology on $\cK$, for every $X\in \mathcal{S}$.
If $\mathcal{S} = \{X\}$ is a singleton set, then we simply say that the $\ZL(\bP)$-topology on $\cK$ is locally convex-solid at $X$.  With this terminology, we obtain an intrinsic topological characterization of Condition \eqref{t23} of Theorem \ref{t2}.

\begin{thm}\label{t4}
Let $\cK$ be a convex bounded set in $\OL(\bP)$.  The following are equivalent.
\begin{enumerate}
\item The $\ZL(\bP)$-topology on $\cK$ is {uniformly locally convex-solid on $\cK$}.
\item There exists $\Q\sim \bP$ such that the $\ZL(\Q)$- and $\OL(\Q)$-topologies agree on $\cK$.
\end{enumerate}
\end{thm}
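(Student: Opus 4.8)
The plan is to prove the two implications separately. The implication (2)$\Rightarrow$(1) is the routine one; (1)$\Rightarrow$(2) carries the real weight.

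For (2)$\Rightarrow$(1), let $\Q\sim\bP$ witness (2) and write $g=d\Q/d\bP$. Given a $\ZL(\bP)$-neighborhood $\cU$ of $0$, fix $\delta>0$ with $\{Z:\bP(\abs Z>\delta)<\delta\}\subseteq\cU$. The key classical fact is that, $\bP$ and $\Q$ being equivalent \emph{finite} measures, the absolute continuity $\bP\ll\Q$ is uniform: there is $\ep>0$ such that $\Q(A)<\ep$ forces $\bP(A)<\delta$. Now put $\cW=\{Z:\E_\Q\abs Z<\delta\ep\}$. This set is convex and solid, and by Markov's inequality $\Q(\abs Z>\delta)<\ep$ whenever $Z\in\cW$, hence $\bP(\abs Z>\delta)<\delta$, so $\cW\subseteq\cU$. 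On the other hand $(X+\cW)\cap\cK=\{Y\in\cK:\E_\Q\abs{Y-X}<\delta\ep\}$ is, for \emph{every} $X\in\cK$, a neighborhood of $X$ in the relative $\OL(\Q)$-topology on $\cK$, which by (2) coincides with the relative $\ZL(\bP)$-topology on $\cK$. Since $\cW$ does not depend on $X$, this is exactly the required uniform local convex-solidity.

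For (1)$\Rightarrow$(2), fix a countable neighborhood base $(\cU_m)$ at $0$ in $\ZL(\bP)$, say $\cU_m=\{Z:\bP(\abs Z>2^{-m})<2^{-m}\}$; applying (1) and then replacing $\cW_m$ by $\bigcap_{j\le m}\cW_j$, obtain a \emph{decreasing} sequence of convex-solid sets $\cW_m\subseteq\cU_m$ such that $(X+\cW_m)\cap\cK$ is a relative neighborhood of $X$ for every $X\in\cK$ and every $m$. The goal is to manufacture a single weight $g>0$ with $\E_\bP g<\infty$ so that, setting $d\Q=g\,d\bP/\E_\bP g$, the $\OL(\Q)$-topology agrees on $\cK$ with the $\ZL(\bP)$-topology; since $\OL(\Q)$-convergence always implies convergence in probability, only one inclusion must be checked, and reformulated sequentially it reads: if $X_n,X\in\cK$ and $X_n\to X$ in probability then $\E_\Q\abs{X_n-X}\to0$. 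Here the rel-neighborhood property enters: for each $m$ one has $X_n-X\in\cW_m$ for all large $n$ (convexity of $\cW_m$ even forces every forward convex combination of $(X_n)$ to converge to $X$, linking this to Theorem~\ref{t1}), while $X_n-X$ lies in the $\OL(\bP)$-bounded set $\cK-\cK$. Thus the whole problem reduces to: produce $g>0$ with $\E_\bP g<\infty$ and an index $m_0$ such that $\{\,g\abs Z:Z\in\cW_{m_0}\cap(\cK-\cK)\,\}$ is uniformly integrable in $\OL(\bP)$ --- then split $\abs{X_n-X}=\abs{X_n-X}\one_{\{\abs{X_n-X}\le R\}}+\abs{X_n-X}\one_{\{\abs{X_n-X}> R\}}$, let the first term go to $0$ by bounded convergence with respect to $\Q$, and control the second uniformly by the uniform integrability.

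The construction of this $g$ is the crux, and where I expect the genuine difficulty to lie. The idea is that a convex-solid subset of $\OL(\bP)$ which is simultaneously norm-bounded and contained in a very small probability neighborhood $\cU_{m_0}$ cannot accommodate the obstructions --- essentially $\OL(\bP)$-ball-type configurations --- that would make a taming density impossible (the $\OL(\bP)$-budget, the probability-smallness, and convexity together forbid too many large spikes from coexisting), so its $\OL(\bP)$-tails can be made uniformly small after a change of density. The weight $g$ should be built by an exhaustion / gliding-hump procedure, peeling off the sets on which members of $\cW_{m_0}\cap(\cK-\cK)$ are large and assigning them small $g$-mass, and it is precisely here that the nonatomicity of $(\Om,\Sig,\bP)$ and the interaction of convexity with solidity get used essentially. (An alternative, more indirect route would apply Theorem~\ref{t1} to each probability-convergent sequence in $\cK$ --- legitimate once one knows every forward convex combination converges --- and then amalgamate the resulting measures; but handling the uniformity over all of $\cK$, together with the solid and closed-up versions, still seems to require the same hands-on construction.) Once $g$, hence $\Q$, is in hand, the verification that the topologies agree on $\cK$ is the short argument sketched above.
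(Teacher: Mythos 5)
Your (2)$\Rightarrow$(1) argument is fine and is essentially the paper's: the single set $\cW=\{Z:\E_\Q\abs{Z}<s\}$ is convex and solid, is contained in a prescribed probability-neighborhood, and works simultaneously for all $X\in\cK$. The problem is the hard direction. Beyond the fact that you never actually construct the weight $g$ (you yourself flag the construction as ``the crux'' and offer only a heuristic about spikes and exhaustion), the reduction you propose is to a statement that is \emph{false in general}, so no construction could complete your argument along that route. You reduce (1)$\Rightarrow$(2) to: find one index $m_0$ and a density $g>0$, $\E_\bP[g]<\infty$, such that $\{g\abs{Z}:Z\in\cW_{m_0}\cap(\cK-\cK)\}$ is $\bP$-uniformly integrable. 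Test this against Kardaras' example $\cK=\{X\in\mathbb{L}^0_+(\bP):\E_\bP[X]=1\}$, for which both (1) and (2) hold with $\Q=\bP$. Let $\cW$ be \emph{any} convex solid set with the relative-neighborhood property, applied at $X=\one$: there is $t>0$ such that $Y\in\cK$, $d(Y,\one)<t$ forces $Y-\one\in\cW$. Fix $\lam\in(0,1)$ small and, for $B$ with $\bP(B)$ small, put $Y_B=\frac{1}{1+\lam}\bigl(\one+\frac{\lam}{\bP(B)}\one_B\bigr)\in\cK$; then $d(Y_B,\one)\le\frac{\lam}{1+\lam}+\bP(B)<t$, so $Z_B:=Y_B-\one\in\cW\cap(\cK-\cK)$, while on $B$ one has $\abs{Z_B}\ge\frac{\lam}{2(1+\lam)\bP(B)}$. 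Given any $g>0$ a.s., choose $t_0>0$ with $\bP(g\ge t_0)>0$ and take $B_n\subseteq\{g\ge t_0\}$ with $\bP(B_n)\downarrow0$: then $g\abs{Z_{B_n}}\to\infty$ on $B_n$ and $\E_\bP\bigl[g\abs{Z_{B_n}}\one_{B_n}\bigr]\ge\frac{\lam t_0}{2(1+\lam)}$, so the family $\{g\abs{Z}:Z\in\cW\cap(\cK-\cK)\}$ is never uniformly integrable. In short, uniform integrability of a whole neighborhood slice of $\cK-\cK$ is strictly stronger than the sequential $\OL(\Q)$-convergence you actually need, and it genuinely fails even when the theorem's conclusion holds.

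The paper's mechanism avoids exactly this trap. Instead of fixing one $\cW_{m_0}$, it uses \emph{all} the sets $\cW_k$ with growing weights: it forms $\cC=\bigcup_k\bigl(r\cB_{\OL(\bP)}+\cW_1+2\cW_2+\cdots+k\cW_k\bigr)$, a convex solid absorbing set whose Minkowski functional keeps $\one_A$ at distance $\ge2$, and Hahn--Banach then yields $Y_0\in\mathbb{L}^\infty(\bP)$ with $\E_\bP[\one_AY_0]\neq0$; solidity is what lets one feed $k\abs{X_n-X}\one_A\sgn(Y_0)$ into the functional and conclude $\E_\bP\bigl[\abs{X_n-X}\one_A\abs{Y_0}\bigr]\le\frac1k$ eventually (Proposition 3.1). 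This gives, for every set $A$ of positive measure, a nonzero bounded weight supported in $A$ along which all probability-convergent sequences in $\cK$ converge in the weighted $L^1$ sense; an exhaustion argument (Lemma 2.1 via Proposition 2.2) then glues these local weights into a single $\Q\sim\bP$, producing only the sequential statement --- which is all that is true. If you want to salvage your outline, the object to aim for is this local separation statement, not uniform integrability of $\cW_{m_0}\cap(\cK-\cK)$.
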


For a general convex bounded set  $\cK$  in $\OL(\bP)$, the condition that the $\ZL(\bP)$-topology on $\cK$ is  uniformly locally convex-solid on $\cK$ is genuinely stronger than the plain local convexity.  That is, (Q1) has a negative solution in general.

\begin{thm}[Example A]\label{cea}
There exists a convex bounded circled set $\cK$ in $\OL[0,1]$ that is $\ZL[0,1]$-compact, such that the $\ZL[0,1]$-topology on $\cK$ is locally convex but there does not exist a probability measure $\Q$ on $[0,1]$, equivalent to the Lebesgue measure, such that the $\ZL(\Q)$- and $\OL(\Q)$-topologies agree on $\cK$.
\end{thm}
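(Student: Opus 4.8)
The plan is to exhibit $\cK$ concretely as the $\ZL[0,1]$-closure of the convex circled hull of a doubly-indexed array of step functions organized by the dyadic tree. For each $n$ fix a finite family $\mathcal{F}_n$ of dyadic intervals of length $2^{-n}$, let the array consist of the normalized indicators $u_{n,I}=2^n\one_I$ ($I\in\mathcal{F}_n$) together with $0$, and set $\cK$ to be the $\ZL$-closure of $\co\{\pm u_{n,I}:n\in\N,\ I\in\mathcal{F}_n\}$. Then $\cK$ is convex and circled by construction, and since each $u_{n,I}$ has $\OL$-norm $1$, $\cK$ lies in the $\OL$-unit ball and is $\OL$-bounded. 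For $\ZL$-compactness one checks that the array tends to $0$ in $\ZL$ (as $\bP(I)=2^{-n}\to0$), parametrizes $\cK$ by a weight vector in a compact product space, and uses the summability of $2^{-n}$ along branches of the tree to make the tail of every convex combination built from high scales negligible in $\ZL$; this yields total boundedness, and with closedness, compactness.

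For local convexity I would treat $0$ and a general $X\in\cK$ separately. At $0$ the point is to choose the $\mathcal{F}_n$ so that every sequence in $\cK$ converging to $0$ in probability is automatically $\OL$-null: a $\ZL$-null element of $\cK$ cannot carry appreciable $\OL$-mass spread across scales, so its mass sits on vanishing supports; hence every forward convex combination of such a sequence also has vanishing $\OL$-norm and so converges to $0$ in $\ZL$. At a general $X$, decompose $X$ up to $\ZL$-small error into a finite ``low-scale'' part plus a ``high-scale'' tail; given a basic $\ZL$-neighbourhood $\cU$ of $0$, let $\cW$ be the convex hull of a finite-dimensional neighbourhood of $0$ governing the low scales (automatically convex) and of an $\OL$-ball governing the high-scale tail (which is $\OL$-small inside $\cK$); one then verifies $\cW\subseteq\cU$ and that $(X+\cW)\cap\cK$ is a relative neighbourhood of $X$.

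For the absence of a good $\Q$, note that $\cK$ is $\ZL$-compact, hence $\ZL$-closed, so by Theorem~\ref{t3} it suffices to show that $\cK$ is not $\Q$-uniformly integrable for any $\Q\sim\bP$. Fix $\Q=g\,d\bP$ with $g>0$ a.e. I would choose the $\mathcal{F}_n$ so that, although $\bigcup\mathcal{F}_n$ does not shrink to a null set, the family $\{I\in\mathcal{F}_n:n\in\N\}$ still refines almost everywhere, i.e., serves as a differentiation basis. Then the Lebesgue differentiation theorem along this basis gives $\limsup_n\max_{I\in\mathcal{F}_n}\frac{\Q(I)}{\bP(I)}\ge\operatorname{esssup}g\ge1$, whence $\sup_{Y\in\cK}\E_\Q\big[\,|Y|\one_{|Y|>c}\,\big]\ge\max_{I\in\mathcal{F}_n}2^n\Q(I)=\max_{I\in\mathcal{F}_n}\frac{\Q(I)}{\bP(I)}$ for $2^n>c$, so this supremum does not tend to $0$; thus $\cK$ is not $\Q$-uniformly integrable.

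The crux — and the main obstacle — is the tension between the last two steps. Defeating every $\Q$ forces the array to probe $[0,1]$ at arbitrarily fine scales, yet local convexity (at $0$, and also at the limits of the ``bad'' sequences produced in the previous step) forbids such probing from assembling a forward convex combination of a $\ZL$-null sequence into a non-null function. Reconciling these — keeping $\ZL$-null sequences in $\cK$ automatically $\OL$-null while retaining enough differentiation-basis content to beat every $g$ — is exactly what dictates the careful choice of the families $\mathcal{F}_n$: their cardinalities, their positions, and the rate at which $\bigcup\mathcal{F}_n$ sweeps across $[0,1]$.
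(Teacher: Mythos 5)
There is a structural obstruction that makes your plan unworkable, not merely incomplete. Your generators $u_{n,I}=2^n\one_I$ are \emph{positive}, so $\abs{u_{n,I}}=u_{n,I}$, and this collapses the distinction the example must exploit. Indeed, the generator array itself (enumerated so that the scale tends to infinity) is a $\ZL$-null sequence lying in your $\cK$; local convexity of the $\ZL$-topology on $\cK$ at $0$ therefore forces every FCC of this positive sequence to be $\ZL$-null, which is exactly the hypothesis of Proposition \ref{p4.6}\eqref{p4.62}. Hence the $\ZL$-topology on $\cK$ would be locally convex-\emph{solid} at $0$; since $\cK$ is circled and convex, Lemma \ref{l5.5} upgrades this to uniform local convex-solidity, and Theorem \ref{lcsolid} then \emph{produces} a measure $\Q\sim\bP$ of precisely the kind you are trying to exclude (the passage to the $\ZL$-closure is harmless, cf.\ Corollary \ref{c2.3}). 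So for circled hulls of positive generators the two halves of your construction are mutually exclusive — this is the same phenomenon as Theorem \ref{t5}. The tension you flag at the end is thus fatal in your setting: to defeat every $\Q$ your families $\mathcal{F}_n$ must sweep sets of non-vanishing total measure at arbitrarily fine scales, but then averaging the $u_{m,I}$ over a sweeping block of scales assembles an FCC of a $\ZL$-null sequence converging to (essentially) an indicator of a set of measure bounded away from $0$, destroying local convexity at $0$; conversely, if $\sum_m\#\mathcal{F}_m\,2^{-m}<\infty$ (which is what your total-boundedness and neighbourhood arguments implicitly need), set $E_n=\bigcup_{m\ge n}\bigcup\mathcal{F}_n$ and choose a density $g>0$ tiny on each $E_m\setminus E_{m+1}$; then $\sup_{I\in\mathcal{F}_m}2^m\Q(I)\to 0$ and $\cK$ \emph{is} $\Q$-uniformly integrable, so the differentiation-basis lower bound you invoke is unavailable. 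A separate local error: from ``mass sits on vanishing supports'' you conclude that FCCs have vanishing $\OL$-norm, but positive convex combinations preserve $\OL$-mass ($\E_\bP$ of a convex combination of functions of mass $1$ is $1$); only the supports, not the norms, can shrink.

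The paper's construction avoids this by using \emph{signed, cancellative} generators: truncated, normalized Cauchy variables $Y_n$. Cancellation in signed $\ell^1$-combinations (Lemmas \ref{step 1}--\ref{maincor}) shows every FCC of $(Y_1,-Y_1,Y_2,-Y_2,\dots)$ is $\ZL$-null, which gives local convexity at $0$ (Proposition \ref{p4.6}\eqref{p4.61}), hence on all of $\cK$ by circledness (Lemma \ref{l4.7}), and also $\ZL$-compactness (Proposition \ref{p4.9}). At the same time the \emph{absolute values} retain mass: $\frac1n\sum_{m=1}^n\abs{Y_{n+m}}\to\frac{\one}{\pi}$ (Lemma \ref{Z_n}), so the topology on $\cK$ is not locally convex-solid at $0$, and Theorem \ref{lcsolid} then rules out every $\Q$. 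In other words, the whole point is to make the convex hull behave well while the convex-solid hull does not — something that is impossible when the generators are positive, as in your proposal.
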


The construction of the example is based on an example of Pryce \cite{P}.
However, the set $\cK$ in Example A is not contained in $\mathbb{L}_+^1[0,1]$.
Nevertheless, it turns out that  (Q1+) has a negative answer in general as well.

\begin{thm}[Example B]\label{ceb}
There exist a nonatomic probability space $(\Omega,\Sigma,\bP)$ and  a convex bounded  set $\cK$ in $\mathbb{L}_+^1(\bP)$  such that the $\ZL(\bP)$-topology on $\cK$ is locally convex but there does not exist $\Q\sim \bP$ such that the $\ZL(\Q)$- and $\OL(\Q)$-topologies agree on $\cK$.
\end{thm}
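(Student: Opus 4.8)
The plan is to produce a positive counterpart of Example A by ``folding'' the Pryce-type construction of Theorem~\ref{cea} into $\mathbb{L}^1_+$. The set $\cK$ in Example A is circled, hence automatically far from being positive; the obstruction to positivity is precisely that the cancellation between positive and negative parts is what makes the $\ZL$-topology locally convex while destroying uniform integrability. So the first step is to take the circled set $\cK_0$ of Theorem~\ref{cea} living in $\OL[0,1]$ and transport it to a positive set on a larger probability space. Concretely, I would work on $\Omega = [0,1]\times\{-1,1\}$ with $\bP = \lambda\otimes(\tfrac12\delta_{-1}+\tfrac12\delta_{1})$ (a nonatomic probability space), and to each $X\in\cK_0$ associate $\widehat X\in \mathbb{L}^1_+(\bP)$ defined by $\widehat X(\omega,\ep) = X^+(\omega)$ if $\ep = 1$ and $\widehat X(\omega,\ep) = X^-(\omega)$ if $\ep=-1$, plus a fixed large constant (or a fixed strictly positive function) $c$ to keep everything bounded away from $0$ if convenient; the key point is that $\widehat X \ge 0$, that $\|\widehat X\|_1$ is controlled by $\|X\|_1$, and that convergence in probability of $X_n\to X$ on $[0,1]$ is equivalent to convergence in probability of $\widehat{X_n}\to\widehat X$ on $\Omega$ because $X\mapsto X^\pm$ are continuous for the $\ZL$-topology. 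Let $\cK := \co\{\widehat X : X\in\cK_0\}$, or more carefully the image $\{\widehat X: X\in \cK_0\}$ itself if that is already convex under a suitable linear choice of the embedding; since $X\mapsto (X^+,X^-)$ is not linear, I expect to need either to pass to the convex hull and check boundedness, or to choose $\cK_0$ of a special form (e.g. built from disjointly supported pieces) so that the embedding respects convexity.

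The second step is to verify the two required properties for $\cK$. Boundedness in $\OL(\bP)$ is immediate from the bound on $\|X\|_1$ for $X\in\cK_0$ (and convex hulls of bounded sets are bounded). Local convexity of the $\ZL(\bP)$-topology on $\cK$: here I would use the equivalent FCC-formulation from the introduction. Given $\widehat{X_n}\to\widehat X$ in probability in $\OL(\bP)$, restricting to the two fibres gives $X_n^+\to X^+$ and $X_n^-\to X^-$ in probability on $[0,1]$, hence $X_n = X_n^+ - X_n^-\to X$ in probability; since the $\ZL[0,1]$-topology is locally convex on $\cK_0$ by Theorem~\ref{cea}, every FCC of $(X_n)$ converges to $X$ in probability, and then — because the map $X\mapsto(X^+,X^-)$ together with the continuity of lattice operations propagates through forward convex combinations on the two fibres — one deduces that every FCC of $(\widehat{X_n})$ converges to $\widehat X$ in probability. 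This last implication is the place to be careful: a forward convex combination of the $\widehat{X_n}$ restricted to the fibre $\ep=1$ is a forward convex combination of the $X_n^+$, not of $X_n$, so I must invoke local convexity at the level of $X^+$ and $X^-$ separately. That is why it is cleaner to arrange that $\cK_0$ is already lattice-structured: if $\cK_0$ is taken to be of the form $\{a - b : a\in \cA, b\in \cB\}$ for convex sets $\cA,\cB$ of disjointly supported nonnegative functions, then FCCs split fibrewise and local convexity of $\cK_0$ is inherited from local convexity of $\cA,\cB$, which in turn should follow from the Pryce construction.

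The third step is to show that no $\Q\sim\bP$ makes the $\ZL(\Q)$- and $\OL(\Q)$-topologies agree on $\cK$. By Theorem~\ref{t4} (which applies since $\cK$ is convex and bounded in $\OL(\bP)$), such a $\Q$ would exist iff the $\ZL(\bP)$-topology on $\cK$ is uniformly locally convex-solid on $\cK$; so it suffices to rule this out, equivalently to rule out $\Q$-uniform integrability along a suitable subsequence. Any $\Q\sim\bP$ on $\Omega$ restricts to equivalent measures $\Q_1,\Q_{-1}$ on the two copies of $[0,1]$; $\Q$-uniform integrability of $\cK$ forces $\Q_1$-uniform integrability of $\{X^+ : X\in\cK_0\}$ and $\Q_{-1}$-uniform integrability of $\{X^- : X\in\cK_0\}$, hence $\Q_1$- (respectively $\Q_{-1}$-) uniform integrability of enough of $\cK_0$ to contradict the already-established failure in Theorem~\ref{cea}. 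Concretely I expect to transfer the ``bad'' sequence witnessing non-uniform-integrability from Example A: the Pryce construction produces, for every equivalent measure, a sequence in $\cK_0$ that converges in probability but whose FCCs or tails fail to be uniformly integrable; its positive and negative parts cannot both be uniformly integrable under $\Q_1,\Q_{-1}$, and whichever one fails is a uniformly integrable failure for $\cK$ under $\Q$.

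The main obstacle, as flagged above, is reconciling convexity with the nonlinear embedding $X\mapsto(X^+,X^-)$: one must either choose the Pryce-type set $\cK_0$ to have a product/lattice structure so that the positive construction is literally $\{a-b:a\in\cA,b\in\cB\}$ and the embedding becomes the affine map $(a,b)\mapsto(a\text{ on fibre }1,\ b\text{ on fibre }-1)$, or else take convex hulls and separately argue that the hull does not accidentally acquire uniform local convex-solidity. I anticipate the cleanest route is the former: rework Pryce's example directly in $\mathbb{L}^1_+$ as a convex set of functions of the form $\sum_k t_k f_k$ on a tree-indexed partition, where the $f_k\ge0$ are chosen (as in \cite{P}) so that partial sums converge in probability but no reweighting by a single density tames all tails, and verify local convexity by the FCC criterion fibre-by-fibre. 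The rest is bookkeeping: boundedness, nonatomicity of $\bP$, and translating Theorem~\ref{t4} and the failure half of Theorem~\ref{cea} to the new space.
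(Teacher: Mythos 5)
Your plan has a genuine gap at its core, and it is located exactly where you flagged it, but the problem is fatal rather than a matter of bookkeeping. The local convexity of the Example A set rests entirely on sign cancellation in the combinations $\sum a_iY_i$ (the Cauchy-tail estimates of Lemmas \ref{step 1}--\ref{maincor}); positivity forbids precisely this cancellation (this is the content of Lemma \ref{l5.1} and of the Remark following the proof of Theorem \ref{cea}). Concretely, in your folded set the fibre-$1$ restriction of an FCC of $(\widehat{Y_n})$ is an FCC of $(Y_n^{+})$, and the same law-of-large-numbers computation as in Lemma \ref{Z_n} shows $\frac{1}{n}\sum_{m=1}^{n}Y_{n+m}^{+}\longrightarrow \frac{\one}{2\pi}$ in probability (note $\E_\m[Y_n^{+}]=\frac{1}{2\pi}$ for all $n$), while $\widehat{Y_n}\longrightarrow 0$ in probability and $0\in\cK$. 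So the folded set is \emph{not} locally convex at $0$: transporting Example A to the positive cone destroys the very property you need. Your fallback --- reworking Pryce directly in $\mathbb{L}^1_+$ as a set of the form $\{\sum_k t_kf_k\}$ built over a sequence of nonnegative functions --- is blocked structurally by Theorem \ref{t5} (and Theorem \ref{t5.4}): for $\cK=\co(X_n)$ with $(X_n)$ bounded in $\mathbb{L}^1_+(\bP)$, local convexity of the $\ZL(\bP)$-topology on $\cK$ already \emph{implies} the existence of $\Q\sim\bP$ with agreeing topologies, so no positive counterexample can be (essentially) the convex hull of a sequence.

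The paper's actual Example B is of a completely different nature: it takes the nonseparable product space $\{0,1\}^{\Gamma\times\N}$ with $\Gamma$ uncountable and $\cK=\co\{U_{\gamma,n}:\gamma\in\Gamma,\,n\in\N\}$, where $U_{\gamma,n}$ depends only on the coordinates indexed by $\gamma$. Local convexity holds because for each $X\in\cK$ one can build a measure $\Q_X$ (depending on $X$) via independence and a splitting lemma (Lemmas \ref{l5.7}--\ref{p5.9}); but no single $\Q$ works, since by Proposition \ref{p2.1} such a $\Q$ would yield a set $A$ of positive measure depending on only countably many coordinates, and a fresh coordinate $\gamma$ gives $U_{\gamma,n}\to U_{\gamma,1}$ in probability with $\E_\bP[\abs{U_{\gamma,n}-U_{\gamma,1}}\one_A]=\bP(A)\not\to 0$ by independence. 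The nonseparability is essential to evade Theorem \ref{t5}; indeed the separable/closed case (Q1') is explicitly left open in the paper, so a successful version of your separable folding construction would have answered an open question rather than reproved Theorem \ref{ceb}.
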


\smallskip

Unlike in Example A, the set $\cK$ in Example B, as well as the underlying measure space,  is nonseparable, neither is it closed  in the $\ZL(\bP)$-topology.
Hence, the following modifications of (Q1+) are still open.

\begin{enumerate}
\item[(Q1')]  Let $\cK$ be a convex bounded set in $\mathbb{L}^1_+(\bP)$.
Assume that the $\ZL(\bP)$-topology is locally convex on $\cK$.
Is it true that if  $\cK$ is closed, or separable, in $\ZL(\bP)$, or if both conditions hold (in particular, if $\cK$ is compact in $\ZL(\bP)$), then there exists $\Q\sim \bP$ such that the $\ZL(\Q)$- and $\OL(\Q)$-topologies agree on $\cK$?
\end{enumerate}
Note that the $\ZL(\bP)$-topology is metrizable and thus compact sets in $\ZL(\bP)$ are both closed and separable.
Note also that if $\cK$ is a separable set in $\mathbb{L}^0_+(\bP)$, then there is a non-atomic separable sub-$\sigma$-algebra $\Sigma'$ of $\Sigma$ such that $\cK$ is $\Sigma'$-measurable.

\smallskip

Finally, concerning the problems (Q1+) and (Q1'),
we have the  following result in the positive direction that is somewhat surprising and complements Theorem \ref{t1}.

\begin{thm}\label{t5}
Let $(X_n)$ be a bounded sequence in $\mathbb{L}^1_+(\bP)$ and let $\cK = \co(X_n)$.  The following are equivalent.
\begin{enumerate}
\item The $\ZL(\bP)$-topology is locally convex on $\cK$.
\item There exists $\Q\sim \bP$ such that the $\ZL(\Q)$- and $\OL(\Q)$-topologies agree on $\cK$.
\end{enumerate}
\end{thm}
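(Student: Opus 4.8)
The implication $(2)\Rightarrow(1)$ is routine: if $\Q\sim\bP$ makes the $\ZL(\Q)$- and $\OL(\Q)$-topologies agree on $\cK$, then, since the $\OL(\Q)$-topology is generated by a norm and hence locally convex, its restriction to the convex set $\cK$ has a base of relative convex neighbourhoods at every point; as $\ZL(\Q)=\ZL(\bP)$ and these two topologies coincide on $\cK$, the relative $\ZL(\bP)$-topology on $\cK$ is locally convex. Equivalently, $\OL(\Q)$-convergence is stable under forward convex combinations, and on $\cK$ it is the same as convergence in $\bP$-probability.

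For $(1)\Rightarrow(2)$ the plan is to produce $\Q\sim\bP$ explicitly, or --- what amounts to the same thing via Theorem~\ref{t4} --- to verify that the $\ZL(\bP)$-topology on $\cK$ is \emph{uniformly} locally convex-solid on $\cK$; the whole difficulty is the passage from local convexity \emph{at each point} of $\cK$ to a single change of measure (or a single choice of convex-solid neighbourhoods) that works \emph{simultaneously} on all of $\cK$, and the mechanism for this passage must be the fact that $\cK=\co(X_n)$ is generated by a \emph{single} sequence. I would begin with a harmless reduction: $\co(X_n)$ is separable in $\OL(\bP)$, hence there is a non-atomic separable sub-$\sigma$-algebra $\Sigma'\subseteq\Sigma$ making every $X_n$ measurable; both the hypothesis and the conclusion depend only on $\bP|_{\Sigma'}$, and an equivalent measure on $\Sigma'$ extends to one on $\Sigma$, so one may assume $(\Om,\Sig,\bP)=([0,1],\mathrm{Leb})$. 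Writing the sought Radon--Nikod\'ym derivative as $g$ (with $g>0$ a.s.\ and $\int g\,d\bP=1$), I would build $g$ by an exhaustion that processes the countably many generators $X_n$ together with their dyadic truncations $X_n\one_{\{X_n>2^m\}}$: at each stage one locates the ``dangerous'' pieces on which agreement of the $\ZL(\Q)$- and $\OL(\Q)$-topologies could fail and assigns them little $g$-mass, with summable budgets so that $g$ is well defined and strictly positive in the limit.

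The crux is to show that this construction cannot stall, and this is exactly where local convexity enters: if at some stage no admissible allocation were possible, then, extracting along the stages, one would assemble a sequence $(W_j)$ in $\cK$ --- genuine finite convex combinations of the $X_n$ --- converging in probability to some $W\in\cK$, together with a forward convex combination of $(W_j)$ staying bounded away from $W$ in probability, contradicting local convexity of the $\ZL(\bP)$-topology at $W$; the single-sequence (equivalently, separable) structure is what keeps this extraction countable and keeps the limit point $W$ inside $\cK$. Having obtained $g$, one checks that $\Q=g\bP$ does the job, i.e.\ that $\bP$-convergence of a net in $\cK$ to a point of $\cK$ forces $\OL(\Q)$-convergence, with positivity of the $X_n$ used here just as in Theorem~\ref{t1}. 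I expect the orchestration of the exhaustion to be the main obstacle: one must guarantee that the $W_j$ are honest elements of $\cK$, that they converge in probability, and that the witnessing combination is truly a forward convex combination of $(W_j)$. Finally, neither hypothesis can be dropped --- positivity is essential (Theorem~\ref{cea} exhibits a circled convex $\ZL$-compact set that is locally convex with no suitable $\Q$) and so is the single-sequence/separable structure (Theorem~\ref{ceb}); moreover one cannot cut down to the $\ZL(\bP)$-closure of $\cK$, since the $\ZL(\bP)$-topology need not be locally convex on that closure even when it is on $\cK$, and one cannot replace $(2)$ by $\Q$-uniform integrability, which is already strictly stronger for $\cK=\co(X_n)$ (take, for instance, $(X_n)$ dense in $\{X\in\mathbb{L}^1_+(\bP):\E X=1\}$).
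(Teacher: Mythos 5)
Your direction (2)$\implies$(1) is fine, and your overall strategy for (1)$\implies$(2) --- reduce via Theorem \ref{t4} to uniform local convex-solidity and exploit the countable generating sequence together with positivity --- is in the right spirit (the separability reduction is harmless but not needed). However, the heart of the argument, namely the passage from local convexity at each point to a single equivalent measure, is only announced, never carried out. The ``exhaustion processing the generators and their dyadic truncations'' is unspecified: you do not say what an ``admissible allocation'' is, which ``dangerous pieces'' get small mass, or how a failure of the construction would actually assemble a sequence $(W_j)\subset\cK$ converging in probability to some $W\in\cK$ together with an FCC of $(W_j)$ staying away from $W$ --- and you acknowledge this yourself. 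As written, the proposal has a genuine hole exactly at its crux.

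What the paper supplies, and what your sketch lacks, are two concrete devices. First (Proposition \ref{p5.3}), positivity upgrades local convexity at a point $X$ to local convex-solidity at $X$: one takes $\cW_n=\co\so\big(\cB_n\cap(\cK-X)\big)$, where $\cB_n$ is a small ball in the metric of convergence in probability, and uses Lemma \ref{l5.1} (every FCC of $\abs{X_k-X}$ tends to $0$) to show that some $\cW_n$ lies in a prescribed neighbourhood; and for a \emph{countable} set $\cS$ of points one glues the resulting measures through Proposition \ref{p2.1}\eqref{p2.12}, intersecting sets $A_k$ with $\bP(A_k)>1-\ep/2^k$. Second --- and this is the step your plan does not address at all --- the countable set one can control in this way consists of the rational convex combinations $\cS=\{\sum_{n\le m}b_nX_n:\ b_n\ge0\text{ rational},\ \sum b_n=1\}$, whereas a sequence in $\cK$ may converge to a point of $\cK\setminus\cS$. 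The paper's remedy is that $\cS$ is \emph{relatively internal} in $\cK$: given $X=\sum_{n\le m}a_nX_n\in\cK$, choose rationals $b_n\ge a_n/3$ with $b=\sum_{n\le m}b_n\le1$ and set $Y=\sum_{n\le m}b_nX_n+(1-b)X_{m+1}\in\cS$; this yields $Z\in\cK$ and $\al\in(0,1)$ with $\al X+(1-\al)Z\in\cS$, so that $X_k\to X$ in probability forces $\al X_k+(1-\al)Z\to\al X+(1-\al)Z\in\cS$, and $\OL(\Q)$-convergence at points of $\cS$ gives $\al\,\E_\Q[\abs{X_k-X}]\to0$, hence convergence at $X$ (Theorem \ref{t5.4}). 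Without some such device, a measure $\Q$ built only from the countably many generators gives no control over convergence to limit points of $\cK$ outside the processed family; the appeal to ``positivity as in Theorem \ref{t1}'' is not a substitute, since Theorem \ref{t1} concerns the convex hull of a single convergent sequence, not an arbitrary $\co(X_n)$.
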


We also include alternative proofs of Theorems \ref{t1} and \ref{t2} in the spirit of the present paper as an appendix at the end.

\section{``De-switching" probability measures}

The main conditions of interest in Theorems \ref{t1} and \ref{t2} and in the questions (Q1) and (Q2) involve switching from a probability measure $\bP$ to an equivalent one.  It would be convenient to reformulate these conditions to remove the switching of probability measures.  We begin with a simple lemma that is essentially an exhaustion technique.

\begin{lem}\label{l2.0}
Let $\xi: \Sig \to \{0,1\}$ be a function such that $\xi(A) \geq \xi(B)$ if $A\subseteq B$ and that $\xi(A\cup B) =1$ if $\xi(A) = \xi(B) =1$.
Then there exists $C\in \Sig$ such that
\begin{equation}\label{e2.1}\bP(C) = \sup\big\{\bP(A): A \in \Sigma,\ \xi(A) = 1\big\}  \;\text{ and }\; \bP(A\bs C) = 0 \text{ if } \xi(A) =1.\end{equation}
\end{lem}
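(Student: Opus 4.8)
The statement is a standard exhaustion argument, so the plan is to build $C$ as a countable union of sets with $\xi=1$ whose measures approach the supremum $s := \sup\{\bP(A): \xi(A)=1\}$.

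First I would observe that the collection $\cA := \{A\in\Sig : \xi(A)=1\}$ is nonempty: taking $A=B=\emptyset$ in the hypothesis $\xi(A\cup B)=1$ when $\xi(A)=\xi(B)=1$ forces $\xi(\emptyset)=1$ (alternatively one can note $s\geq 0$ is achieved). Then pick a sequence $A_n\in\cA$ with $\bP(A_n)\to s$, and set $C := \bigcup_{n=1}^\infty A_n$. Each finite union $A_1\cup\dots\cup A_n$ lies in $\cA$ by induction using the finite-additivity-type hypothesis on $\xi$, and since $A_1\cup\dots\cup A_n \subseteq C$ and $A_n\subseteq A_1\cup\dots\cup A_n$, monotonicity of $\xi$ gives $\bP(C)\geq \bP(A_1\cup\dots\cup A_n)\geq\bP(A_n)\to s$. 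On the other hand $\bP(C)\leq s$ requires knowing $\xi(C)=1$; this follows because $\xi$ is monotone decreasing under inclusion only downward, so I instead argue $\bP(C)=\lim_n \bP(A_1\cup\dots\cup A_n)$ by continuity of the measure from below, and each term is $\leq s$, giving $\bP(C)\leq s$ and hence $\bP(C)=s$. (Note we do not actually need $\xi(C)=1$ for the first equation, only the sup identity.)

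For the second assertion, suppose $A\in\Sig$ with $\xi(A)=1$. Then for each $n$, $\xi(A\cup A_1\cup\dots\cup A_n)=1$ by the hypothesis, so $\bP(A\cup A_1\cup\dots\cup A_n)\leq s$. Letting $n\to\infty$ and using continuity from below, $\bP(A\cup C)\leq s = \bP(C)$, whence $\bP(A\setminus C)=\bP(A\cup C)-\bP(C)\leq 0$, i.e. $\bP(A\setminus C)=0$. This completes the proof.

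**Main obstacle.** There is essentially no hard step here; the only point requiring a little care is that $\xi$ is assumed only finitely subadditive (in the weak sense stated) and monotone, \emph{not} countably additive, so one must route the argument through the \emph{measure} $\bP$'s continuity from below rather than through any continuity property of $\xi$ — in particular one should resist the temptation to claim $\xi(C)=1$, which need not hold and is not needed. Ensuring that every finite union stays in $\cA$ via the two-set hypothesis (an easy induction) and that the passage to the limit is done on $\bP$, not $\xi$, are the two things to state explicitly.
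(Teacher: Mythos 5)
Your proof is correct, and the construction of $C$ (take a sequence $(A_n)$ with $\xi(A_n)=1$ and $\bP(A_n)\to s$, observe that finite unions stay in the class, and pass to the limit with $\bP$'s continuity from below) is exactly the paper's exhaustion argument. Where you differ is in the second assertion: the paper argues by contradiction, using the monotonicity hypothesis to get $\xi(A\setminus C)=1$ and then adjoining a near-maximal $A_n$ to exceed the supremum, whereas you bound $\bP(A\cup C)\le s=\bP(C)$ directly via the finite unions $A\cup A_1\cup\dots\cup A_n$ and subtract, getting $\bP(A\setminus C)\le 0$ without any contradiction. Your route is slightly cleaner and, as a bonus, never invokes the hypothesis $\xi(A)\ge\xi(B)$ for $A\subseteq B$ at all, so it establishes the lemma under the union hypothesis alone; the paper's version uses both hypotheses. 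One small caveat: your parenthetical that taking $A=B=\emptyset$ ``forces'' $\xi(\emptyset)=1$ is circular (the hypothesis only yields $\xi(\emptyset)=1$ assuming $\xi(\emptyset)=1$), so you should simply assume the class $\{A:\xi(A)=1\}$ is nonempty (as the paper implicitly does when it chooses the sequence $(A_n)$; in the applications $\xi(\emptyset)=1$ holds trivially, and in the empty case the conclusion holds with $C=\emptyset$ under the convention $\sup\emptyset=0$).
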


\begin{proof}
Define
\[ a = \sup\big\{\bP(A): A \in \Sigma,\ \xi(A) = 1\big\}\]
Choose a sequence $(A_n)$ in $\Sig$ such that $\xi(A_n) =1$ for all $n\in\N$ and $\bP(A_n)\longrightarrow a$.
Let $C = \cup_{n=1}^\infty A_n$.
Note that $\xi(\cup^n_{m=1}A_m) =1$ for all $n\in\N$.  Hence, $\bP(A_n) \leq \bP(\cup^n_{m=1}A_m) \leq a$ for all $n$.
It follows that $\bP(C) =\lim_n\bP(\cup^n_{m=1}A_m)=a$.
Suppose that $A\in \Sig$ and $\xi(A) =1$.
Since $A\bs C\subseteq A$, $\xi(A\bs C) \geq \xi(A) =1$, implying that $\xi(A\bs C)=1$.
If $\bP(A\bs C) >0$, we can choose $n\in\N$ such that $\bP(A_n) > a - \bP(A\bs C)$.
Since $A_n\subseteq C$, $A_n$ and $A\bs C$ are disjoint sets.
Thus,
\[ \bP\big(A_n \cup (A\bs C)\big) = \bP(A_n) + \bP(A\bs C) > a.\]
But we also have $\xi\big(A_n \cup (A\bs C)\big) = 1$ since $\xi(A_n) = \xi(A\bs C) = 1$.
This contradicts the choice of $a$.  Thus $\bP(A\bs C) = 0$, as desired.
\end{proof}

\begin{prop}\label{p2.1}
Let $\cK$ be a convex bounded subset of $\OL(\bP)$ and let $\mathcal{S}$ be a nonempty subset of $\OL(\bP)$.
The following are equivalent.
\begin{enumerate}
\item\label{p2.11} There exists $\Q \sim \bP$ such that if $(X_n)$ is a sequence in $\cK$ that converges in probability to some $X\in \cS$, then $(X_n)$ converges to $X$ in $\OL(\Q)$.
\item\label{p2.12} For any $\ep >0$, there exists a measurable set $A$ with $\bP(A) > 1-\ep$
such that if $(X_n)$ is a sequence in $\cK$ that converges in probability to some $X\in \cS$, then $\E_\bP\big[\abs{X_n-X}\one_A\big] \longrightarrow 0$.
\item\label{p2.13} For any measurable set $A$ with $\bP(A) >0$, there exists a measurable subset $B$ of $A$ with $\bP(B) >0$ such that  if $(X_n)$ is a sequence in $\cK$ that converges in probability to some $X\in \cS$, then $\E_\bP\big[\abs{X_n-X}\one_B\big] \longrightarrow 0$.
\end{enumerate}
\end{prop}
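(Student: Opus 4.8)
The equivalences (\ref{p2.11})$\Rightarrow$(\ref{p2.12})$\Rightarrow$(\ref{p2.13})$\Rightarrow$(\ref{p2.11}) should be proved in a cycle, with the real content hidden in the last implication. For (\ref{p2.11})$\Rightarrow$(\ref{p2.12}): given $\Q\sim\bP$ as in (\ref{p2.11}), write $g=d\Q/d\bP>0$ a.s.\ and pick, for $\ep>0$, a set $A$ with $\bP(A)>1-\ep$ on which $g$ is bounded below by some $\delta>0$; then $\E_\bP[\abs{X_n-X}\one_A]\le\delta^{-1}\E_\Q[\abs{X_n-X}]\to 0$. The implication (\ref{p2.12})$\Rightarrow$(\ref{p2.13}) is immediate: given $A$ with $\bP(A)>0$, choose $\ep<\bP(A)$, take the set $A'$ from (\ref{p2.12}) with $\bP(A')>1-\ep$, and set $B=A\cap A'$, which has $\bP(B)>0$; the required convergence on $B$ follows from that on $A'$ since $\one_B\le\one_{A'}$.

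**The main step.** The crux is (\ref{p2.13})$\Rightarrow$(\ref{p2.11}), and here Lemma~\ref{l2.0} is the right tool. Define $\xi:\Sig\to\{0,1\}$ by setting $\xi(A)=1$ iff $A=\emptyset$ or else $\bP(A)>0$ and every sequence $(X_n)$ in $\cK$ converging in probability to some $X\in\cS$ satisfies $\E_\bP[\abs{X_n-X}\one_A]\to 0$. I would check the two hypotheses of Lemma~\ref{l2.0}: monotonicity ($\xi(A)\ge\xi(B)$ when $A\subseteq B$) holds because $\one_A\le\one_B$; and finite union stability ($\xi(A\cup B)=1$ when $\xi(A)=\xi(B)=1$) holds because $\one_{A\cup B}\le\one_A+\one_B$, so the two convergences add. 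Lemma~\ref{l2.0} then supplies a set $C$ with $\bP(C)=a:=\sup\{\bP(A):\xi(A)=1\}$ and $\bP(A\setminus C)=0$ whenever $\xi(A)=1$. Condition (\ref{p2.13}) forces $\bP(C)=1$: if $\bP(\Omega\setminus C)>0$, applying (\ref{p2.13}) to $A=\Omega\setminus C$ yields $B\subseteq\Omega\setminus C$ with $\bP(B)>0$ and $\xi(B)=1$, so $\bP(B\setminus C)=0$; but $B\subseteq\Omega\setminus C$ gives $B=B\setminus C$, hence $\bP(B)=0$, a contradiction. So $\bP(C)=1$, i.e.\ $\xi=1$ holds on a set of full measure, but this on its own only gives convergence "on $C$" which is all of $\Omega$ up to null sets — I must still upgrade this to an $\OL(\Q)$-convergence statement for a single fixed $\Q\sim\bP$.

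**Building $\Q$.** To produce $\Q$, I would run a further exhaustion: using (\ref{p2.13}) repeatedly, build a countable partition $\{B_k\}_{k\ge 1}$ of $\Omega$ (up to a null set) into sets of positive measure such that on each $B_k$ one has $\E_\bP[\abs{X_n-X}\one_{B_k}]\to 0$ for all admissible $(X_n),X$; concretely, exhaust $\Omega$ by a maximal disjoint family of "good" sets, which must be countable and cover $\Omega$ a.s.\ by the argument of the previous paragraph applied to the remainder. Then define $g=\sum_k c_k\one_{B_k}$ with constants $c_k>0$ chosen small enough that $\sum_k c_k\bP(B_k)=1$ and also small enough to control the tails; set $d\Q/d\bP=g$, so $\Q\sim\bP$. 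For a sequence $(X_n)$ in $\cK$ converging in probability to $X\in\cS$, boundedness of $\cK$ in $\OL(\bP)$ together with $\abs{X}\in\OL(\bP)$ (as the in-probability limit of an $\OL(\bP)$-bounded sequence, $X$ is integrable, since $\cK$ is $\ZL$-bounded) gives a uniform bound $M=\sup_n\E_\bP\abs{X_n-X}<\infty$; choosing the $c_k$ so that $\sum_{k>N}c_k<\eta_N\to 0$ makes the tail $\sum_{k>N}c_k\E_\bP[\abs{X_n-X}\one_{B_k}]\le M\eta_N$ arbitrarily small uniformly in $n$, while the head $\sum_{k\le N}c_k\E_\bP[\abs{X_n-X}\one_{B_k}]\to 0$ by the finitely many good-set convergences. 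Hence $\E_\Q\abs{X_n-X}\to 0$, which is (\ref{p2.11}).

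**Expected obstacle.** The delicate point is the passage from "for each admissible sequence, convergence holds on a co-null set" (which is what $\xi=1$ on a full-measure set literally says) to a \emph{single} $\Q$ that works \emph{simultaneously} for all admissible sequences. The resolution is precisely that the null set in $\xi(A\setminus C)=0$ does not depend on the sequence — it is built into the definition of $\xi$ via a universal quantifier — so the countable partition $\{B_k\}$ is sequence-independent, and the only sequence-dependent quantities are the finitely many convergent head terms and the uniformly bounded tail. One should double-check that the maximal disjoint family of good sets is genuinely countable (it is, since the $B_k$ have positive measure and are disjoint in a probability space) and that its union is co-null (otherwise (\ref{p2.13}) applied to the complement contradicts maximality). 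I do not expect any serious difficulty beyond keeping this bookkeeping straight.
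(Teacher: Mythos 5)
Your proposal is correct and takes essentially the same route as the paper: the two easy implications are proved identically, and for \eqref{p2.13}$\implies$\eqref{p2.11} you use the same exhaustion idea together with a countable family of ``good'' sets and a density $\sum_k c_k\one_{B_k}$ with summable weights, which is just a disjointified version of the paper's increasing sequence $(A_k)$ with $Y=2^{-k}$ on $A_k\setminus A_{k-1}$ and the same head/tail estimate using $\sup_{X\in\cK}\E_\bP[\abs{X}]<\infty$ and Fatou. One small repair: the clause ``$\bP(A)>0$'' in your definition of $\xi$ makes $\xi$ vanish on nonempty null sets and thus breaks the monotonicity hypothesis of Lemma~\ref{l2.0}; simply declare all null sets good (the convergence on them is trivial), and the lemma applies exactly as you intend.
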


\begin{proof}
\eqref{p2.11}$\implies$\eqref{p2.12}.  Assume that \eqref{p2.11} holds. Note that $Y:=\frac{\mathrm{d}\Q}{\mathrm{d}\bP}>0$ a.s.  Given $\ep >0$, choose $r >0$ such that $A = \{Y \geq  r\}$ satisfies $\bP(A) > 1-\ep$.
Suppose that $(X_n)$ is a sequence in $\cK$ that converges in probability to some $X\in \cS$.
Then
\[ \E_\bP\big[\abs{X_n-X}\one_A\big] \leq \frac{1}{r} \E_\bP\big[\one_A\abs{X_n-X}Y\big] \leq \frac{1}{r}\E_\Q\big[\abs{X_n-X}\big] \longrightarrow 0.\]

\eqref{p2.12}$\implies$\eqref{p2.13}. Assume that \eqref{p2.12} holds and let  $A\in\Sigma$ be such that $\bP(A) >0$.
By (2), choose a measurable set $C$ with $\bP(C) > 1- \bP(A)$ such that if  $(X_n)$ is a sequence in $\cK$ that converges in probability to some $X\in \cS$, then $\E_\bP\big[\abs{X_n-X}\one_C\big]  \longrightarrow 0$.
Since $\bP(A) + \bP(C) > 1$, $\bP(A\cap C) > 0$.  Let $B = A \cap C$.  Then $B$ satisfies Condition \eqref{p2.13}.

\eqref{p2.13}$\implies$\eqref{p2.11}. Assume  that \eqref{p2.13} holds. Define a function $\xi:\Sig\to\{0,1\}$ as follows. Set  $\xi(A) =1$ if  for any sequence $(X_n)$ in $\cK$ that converges in probability to some $X\in \cS$, $\E_\bP\big[\abs{X_n-X}\one_A\big]\longrightarrow 0$, and $0$ otherwise.  It is clear that $\xi$ satisfies the hypotheses of Lemma \ref{l2.0}. By the lemma, there exists $C\in \Sig$ satisfying \eqref{e2.1}.
If $\bP(C^c) > 0$, then by assumption, there exists a measurable set $B\subseteq C^c$ such that $\bP(B) > 0$ and $\xi(B) =1$.  By \eqref{e2.1}, $0=\bP(B\backslash C)=\bP(B)$, where the second equality holds because $B\bs C=B$. This contradicts the choice of $B$.
Hence, $\bP(C) =1$.

Let $c = \sup_{X\in \cK}\E_\bP[\abs{X}]$ and let $\ep > 0$ be given.
Since $\bP(C) =1$, there is a sequence $(A_k)$ in $\Sig$ such that $\bP(A_k) \uparrow 1$ and that $\xi(A_k) =1$ for all $n\in\N$.  We may replace $A_k$ with $\cup^k_{j=1}A_j$, if necessary, to assume that $A_k \subseteq A_{k+1}$ for all $k\in\N$. We may also assume that $\Omega=\cup_{k=1}^\infty A_k$ since $\bP\big(\cup_{k=1}^\infty A_k\big)=1$.
Set $A_0 = \emptyset$ and define $Y$ to be $\frac{1}{2^k}$ on the set $A_{k} \bs A_{k-1}$ for any $k\in\N$.
Then $Y$ is strictly positive and $\Q \sim \bP$, where $\mathrm{d}\Q = \frac{Y}{\E_\bP[Y]}\mathrm{d}\bP$.
Suppose that $(X_n)$ is a sequence in $\cK$ that converges   in probability to some $X\in \cS$. By Fatou's Lemma, $\E_\bP[\abs{X}]\leq \liminf_n\E_\bP[\abs{X_n}]\leq c$.
For any $k$, $0 \leq Y \leq \frac{1}{2^{k+1}}$ on $A^c_k$.  Hence, for any $n,k\in\N$,
\[ \E_\bP\big[\abs{X_n-X}Y\one_{A_k^c}\big] \leq \frac{1}{2^{k+1}}\big(\E_\bP[\abs{X_n}]+\E_\bP[\abs{X}]\big)\leq \frac{c}{2^{k}}.
\]
Note that $Y\leq 1$ pointwise.
Thus, for all $n$ and $k$,
\begin{align*}
 \E_\Q\big[\abs{X_n-X}\big] = & \frac{1}{\E_\bP[Y]}\E_\bP\big[\abs{X_n-X}Y\one_{A_k}\big] +\frac{1}{\E_\bP[Y]}\E_\bP\big[\abs{X_n-X}Y\one_{A_k^c}\big]\\ \leq & \frac{1}{\E_\bP[Y]}\E_\bP\big[\abs{X_n-X}\one_{A_k}\big] +\frac{c}{2^{k}\E_\bP[Y]}.
\end{align*}
Since $\xi(A_k) =1$, $\E_\bP\big[\abs{X_n-X}Y\one_{A_k}\big]  {\longrightarrow} 0$ as $n\longrightarrow\infty$.
Therefore, $$\limsup_n \E_\Q\big[\abs{X_n-X}\big] \leq \frac{c}{2^{k}\E_\bP[Y]}$$ for any $k$, so that $\E_\Q\big[\abs{X_n-X}\big] \longrightarrow0$.
Condition \eqref{p2.11} thus holds for $\Q$ as chosen.
\end{proof}

Although not needed, we remark that $\frac{\mathrm{d}\Q}{\mathrm{d}\bP}$ is bounded for $\Q$ constructed above.

Before proceeding further, let us recall the well-known theorem of Koml\'{o}s \cite{Kom}.
The result is applied to prove the  crucial step
\eqref{p2.23}$\implies$\eqref{p2.24} in Proposition \ref{p2.2} below.

\begin{lem}[\cite{Kom}]\label{Kom}
Let $(X_n)$ be a bounded sequence in $\OL(\bP)$.  Then there exist a subsequence $(X_{n_k})$ of $(X_n)$ and a random variable $X\in \OL(\bP)$ such that for any further subsequence $(X_{n_{k_j}})$ of $(X_{n_k})$,
\[ \lim_m\frac{1}{m}\sum^m_{j=1}X_{n_{k_j}} = X\ a.s.\]
\end{lem}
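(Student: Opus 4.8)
\emph{Proof sketch.} This is the classical theorem of Koml\'os, and I will only outline a strategy of proof. The plan is: reduce to a nonnegative sequence; pass to a subsequence along which every truncation $X_n\wedge R$ has almost surely convergent Ces\`aro averages (the soft part, via the Banach--Saks theorem in Hilbert space); and then control the truncation errors $(X_n-R)^{+}$ (the hard part, where $\OL$-boundedness rather than uniform integrability is felt). First, since Ces\`aro averages are additive and $X_n^{+},X_n^{-}$ are again bounded in $\OL(\bP)$, extracting two successive subsequences reduces everything to a sequence $0\le X_n\in\OL(\bP)$ with $C:=\sup_n\E[X_n]<\infty$; the limiting random variable will then be the difference of the limits obtained for the positive and the negative parts.

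For the truncations: for each $R\in\N$ the sequence $(X_n\wedge R)_n$ lies in a norm-bounded subset of $L^{\infty}(\bP)\subseteq L^{2}(\bP)$. I would use the Banach--Saks theorem in Hilbert space: a weakly convergent sequence in $L^{2}(\bP)$ has a subsequence all of whose further subsequences have norm-convergent Ces\`aro averages; and if that subsequence is chosen carefully---so that the partial sums $S_m$ satisfy $\E[S_m^{2}]=O(m)$---then a dyadic-block decomposition together with the Rademacher--Menshov maximal inequality upgrades this to \emph{almost sure} convergence of the Ces\`aro averages. Diagonalizing over $R$ (and over the countably many truncation differences $X_n\wedge b-X_n\wedge a$ with rational $a<b$, to leave some room) produces a subsequence $(X_{n_k})$ and random variables $Z_R$, nondecreasing in $R$ with $\E[Z_R]\le C$ by Fatou, such that $\frac1m\sum_{i\le m}X_{n_{k_i}}\wedge R\to Z_R$ almost surely for every $R$ and every further subsequence. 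Set $X:=\sup_R Z_R$; then $X\in\OL(\bP)$ and $\liminf_m\frac1m\sum_{i\le m}X_{n_{k_i}}\ge X$ almost surely.

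The reverse inequality $\limsup_m\frac1m\sum_{i\le m}X_{n_{k_i}}\le X$ almost surely is where the real work lies. Here the subsequence must be selected with the ``escaping mass'' of $(X_n)$ in mind: by a Kadec--Pe\l czy\'nski-type subsequence-splitting one may write, along a subsequence, $X_n=U_n+V_n$ with $(U_n)$ uniformly integrable and $V_n\ge0$ supported on pairwise disjoint sets $A_n$; since then $\sum_n\bP(A_n)\le1$, the Borel--Cantelli lemma gives $\frac1m\sum_{i\le m}V_{n_{k_i}}\to0$ almost surely (and likewise along every further subsequence), so there remains the uniformly integrable sequence $(U_n)$, for which $\sup_n\E[(U_n-R)^{+}]\to0$ as $R\to\infty$; its truncation error is then controlled using the identity $(x-R)^{+}=(x\wedge R'-x\wedge R)+(x-R')^{+}$ for $R\le R'$ together with the random variables $Z_R$ already constructed. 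The subtlety---and the heart of Koml\'os's original argument---is that this escaping-mass analysis has in effect to be carried out at every scale; equivalently, one builds the subsequence inductively so that it behaves like an exchangeable sequence and then applies a reverse-martingale convergence argument. Undoing the two reductions then finishes the proof.

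The main obstacle, accordingly, is to obtain genuine \emph{almost sure} convergence---not merely convergence in $\OL(\bP)$ or in probability---of the Ces\`aro averages of the truncation errors, and it is this that forces the delicate choice of subsequence. The part of the argument that deals with the $L^{2}$-bounded truncations is, by comparison, routine.
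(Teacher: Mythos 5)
The paper does not prove this lemma at all: it is quoted from Koml\'os's 1967 paper \cite{Kom} and used as a black box, so there is no in-paper argument to compare yours against. Judged on its own terms, your outline follows the standard modern route (truncate; handle the $L^2$-bounded truncations via Banach--Saks plus a maximal inequality; handle the escaping mass via a Kadec--Pe\l czy\'nski-type splitting), and the pieces you do spell out are sound: the reduction to $X_n\ge 0$ via positive and negative parts, the Borel--Cantelli argument showing the disjointly supported part $V_n$ has Ces\`aro averages tending to $0$ a.s.\ along every further subsequence, and Fatou giving $X=\sup_R Z_R\in\OL(\bP)$ with the a.s.\ lower bound $\liminf_m\frac1m\sum_{i\le m}X_{n_{k_i}}\ge X$.

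There is, however, a genuine gap, which you yourself flag: the a.s.\ upper bound $\limsup_m\frac1m\sum_{i\le m}X_{n_{k_i}}\le X$, valid along every further subsequence, i.e.\ genuine almost sure control of the Ces\`aro averages of the truncation errors of the uniformly integrable part. Saying that one ``builds the subsequence inductively so that it behaves like an exchangeable sequence and then applies a reverse-martingale convergence argument'' names the key idea but does not carry it out; that inductive selection (via conditional distributions or conditional expectations with respect to a suitable filtration, together with the accompanying martingale/maximal estimates) \emph{is} the content of Koml\'os's proof, and nothing in the sketch substitutes for it. Likewise, the upgrade from Banach--Saks norm convergence to a.s.\ convergence for the truncated sequences requires selecting the subsequence so that the centered differences are nearly orthogonal (to get $\E[S_m^2]=O(m)$ before invoking Rademacher--Menshov), and this selection must in addition be stable under passing to arbitrary further subsequences; you state the needed estimate but give no construction achieving it. So the proposal is a correct road map rather than a proof; since the paper itself imports the result, citing \cite{Kom} (as the paper does) is the appropriate resolution, and any self-contained proof would have to supply precisely the exchangeability/almost-orthogonality construction that the sketch defers.
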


\begin{prop}\label{p2.2}
Let $\cK$ be a convex bounded subset of $\OL(\bP)$.
The following are equivalent.
\begin{enumerate}
\item\label{p2.21} There exists $\Q \sim \bP$ such that $\cK$ is $\Q$-uniformly integrable.
\item\label{p2.22}  For any $\ep >0$, there exists a measurable set $A$ with $\bP(A) > 1-\ep$ such that if $(X_n)$ is a sequence in $\cK$ that is Cauchy in probability, then $\E_\bP\big[\abs{X_n-X_m}\one_A\big]\longrightarrow 0$ as $n,m\longrightarrow \infty$.
\item\label{p2.23} For any measurable set $A$ with $\bP(A) >0$, there exists a measurable subset $B$ of $A$ with $\bP(B) >0$ such that  if $(X_n)$ is a sequence in $\cK$ that is Cauchy in probability, then $\E_\bP\big[\abs{X_n-X_m}\one_B\big]\longrightarrow 0$ as $n,m\longrightarrow \infty$.
\item\label{p2.24} For any measurable set $A$ with $\bP(A) >0$, there exists a measurable subset $B$ of $A$ with $\bP(B) >0$ such that $\cK_B: = \{X\one_B: X\in \cK\}$ is $\bP$-uniformly integrable.
\end{enumerate}
\end{prop}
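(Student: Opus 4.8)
The plan is to prove Proposition~\ref{p2.2} by establishing the cycle of implications $\eqref{p2.21}\implies\eqref{p2.22}\implies\eqref{p2.23}\implies\eqref{p2.24}\implies\eqref{p2.21}$, mirroring the structure of the proof of Proposition~\ref{p2.1}, but with Cauchy-in-probability sequences replacing convergent sequences and uniform integrability replacing convergence. The implications $\eqref{p2.21}\implies\eqref{p2.22}$ and $\eqref{p2.22}\implies\eqref{p2.23}$ should be essentially routine. For the first, write $Y=\frac{\mathrm{d}\Q}{\mathrm{d}\bP}>0$ a.s.\ and, given $\ep>0$, pick $r>0$ so that $A=\{Y\geq r\}$ has $\bP(A)>1-\ep$; then $\E_\bP[\abs{X_n-X_m}\one_A]\leq \frac1r\E_\Q[\abs{X_n-X_m}]$, which tends to $0$ since a sequence Cauchy in probability inside a $\Q$-uniformly integrable set is Cauchy in $\OL(\Q)$ (Vitali). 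For $\eqref{p2.22}\implies\eqref{p2.23}$, intersect the set $C$ of measure $>1-\bP(A)$ supplied by \eqref{p2.22} with $A$, exactly as in Proposition~\ref{p2.1}.

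For $\eqref{p2.24}\implies\eqref{p2.21}$, I would run the same exhaustion/weighting argument as in the proof of \eqref{p2.13}$\implies$\eqref{p2.11} of Proposition~\ref{p2.1}. Define $\xi:\Sig\to\{0,1\}$ by $\xi(A)=1$ iff $\cK_A=\{X\one_A:X\in\cK\}$ is $\bP$-uniformly integrable; this clearly satisfies the monotonicity and finite-additivity hypotheses of Lemma~\ref{l2.0} (a finite union of two uniformly integrable families is uniformly integrable). Lemma~\ref{l2.0} then yields $C\in\Sig$ with $\bP(C)=\sup\{\bP(A):\xi(A)=1\}$, and \eqref{p2.24} forces $\bP(C)=1$. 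Writing $\Omega$ as an increasing union of sets $A_k$ with $\xi(A_k)=1$ and $\bP(A_k)\uparrow 1$, set $Y=2^{-k}$ on $A_k\bs A_{k-1}$ and $\mathrm{d}\Q=\frac{Y}{\E_\bP[Y]}\mathrm{d}\bP$. To see $\cK$ is $\Q$-uniformly integrable, split $\E_\Q[\abs{X}\one_{\{\abs X Y>\lambda\}}]$ over $A_k$ and $A_k^c$: on $A_k^c$ one has $Y\leq 2^{-(k+1)}$ so $\E_\bP[\abs X Y\one_{A_k^c}]\leq c2^{-k}$ with $c=\sup_{X\in\cK}\E_\bP[\abs X]$, while on $A_k$ one uses that $\cK_{A_k}$ is $\bP$-uniformly integrable (and $Y\leq 1$) to make $\sup_{X\in\cK}\E_\bP[\abs X Y\one_{A_k}\one_{\{\abs X Y>\lambda\}}]$ small uniformly in $X$ by choosing $\lambda$ large; combine and let $k\to\infty$.

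The main obstacle is the implication $\eqref{p2.23}\implies\eqref{p2.24}$, which is where Koml\'os's theorem (Lemma~\ref{Kom}) enters. Fix $A$ with $\bP(A)>0$; \eqref{p2.23} gives a subset $B\subseteq A$ with $\bP(B)>0$ such that every sequence in $\cK$ that is Cauchy in probability is Cauchy in $\OL(B):=\OL(B,\Sig\cap B,\bP|_B)$. I claim $\cK_B$ is $\bP$-uniformly integrable. If not, there is $\delta>0$ and a sequence $(X_n)$ in $\cK$ with $\E_\bP[\abs{X_n}\one_{B\cap E_n}]\geq\delta$ for sets $E_n$ with $\bP(E_n)\to 0$; equivalently $\cK_B$ fails to be uniformly integrable, and since $\cK_B$ is bounded in $\OL(\bP)$ this means it is not relatively weakly compact, so some sequence $(X_n)$ in $\cK$ has $(X_n\one_B)$ with no weakly convergent subsequence — by the Dunford--Pettis theorem one can extract $(X_n)$ and disjoint sets $D_n\subseteq B$ with $\E_\bP[\abs{X_n}\one_{D_n}]\geq\delta$ for all $n$. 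Now apply Lemma~\ref{Kom} to $(X_n)$ to get a subsequence $(X_{n_k})$ whose Ces\`aro averages of every further subsequence converge a.s.\ to a common $X\in\OL(\bP)$. Using convexity of $\cK$, one builds from $(X_{n_k})$ two FCC-type sequences of convex combinations that are both Cauchy (indeed convergent) in probability to $X$ but whose difference, tested against the disjoint sets $D_{n_k}$, keeps $\E_\bP[\,\cdot\,\one_B]$ bounded below — contradicting that $(X_n)$ Cauchy in probability forces Cauchy in $\OL(B)$. The delicate point is arranging the two sequences of convex combinations so that they converge in probability (hence are Cauchy in probability, so the hypothesis applies) while their $\OL(B)$-distance stays bounded away from $0$; this is done by alternating long blocks of averages (which converge a.s.\ by Koml\'os) against the disjointly supported mass, a standard "sliding hump" applied to Ces\`aro means. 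I would isolate this combinatorial construction as the heart of the proof.
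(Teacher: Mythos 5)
Your overall architecture (the cycle \eqref{p2.21}$\implies$\eqref{p2.22}$\implies$\eqref{p2.23}$\implies$\eqref{p2.24}$\implies$\eqref{p2.21}, with the first two implications done as in Proposition \ref{p2.1} via the Vitali observation, and the last one by the same exhaustion-and-weights construction) is exactly the paper's, and those parts are essentially correct; in \eqref{p2.24}$\implies$\eqref{p2.21} you should test the tail sets $\{\abs{X}>\lambda\}$ rather than $\{\abs{X}Y>\lambda\}$ (since $Y\leq 1$ your set is the smaller one, so controlling it does not by itself give $\Q$-uniform integrability), but the same splitting over $A_k$ and $A_k^c$ fixes this immediately, so it is cosmetic.

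The genuine gap is in \eqref{p2.23}$\implies$\eqref{p2.24}, which you yourself flag as ``the heart of the proof'' and leave as a sketch. Having disjoint humps $D_n\subseteq B$ with $\E_\bP[\abs{X_n}\one_{D_n}]\geq\delta$ is not enough to make your sliding-hump work: when you test a difference of two convex blocks against $\cup_k D_{n_k}$, the lower bound $\delta$ is eroded by the cross terms $\E_\bP[\abs{X_{n_{k'}}}\one_{D_{n_k}}]$, $k'\neq k$. Choosing the humps with small measure after seeing the earlier functions controls the terms with $k>k'$, but nothing in your construction prevents \emph{later} functions from carrying mass $\geq\delta$ on \emph{earlier} humps --- that is precisely what failure of uniform integrability allows --- so the claimed $\OL(B)$-separation of the two interleaved sequences does not follow from disjointness of the $D_n$ alone. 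To control those cross terms one needs something like the subsequence splitting lemma (UI part weakly null after subtracting a weak limit, plus a disjointly supported part of norm $\geq\delta$) and a diagonal/biorthogonal argument; this is exactly the content of \cite[Theorem 5.2.9]{AK:06}, which the paper invokes as a black box to get $c'>0$ and $(X_n)\subset\cK$ with $\E_\bP[\abs{\sum a_kX_k\one_B}]\geq c'\sum\abs{a_k}$ for all scalars. With that $\ell^1$-lower bound in hand the contradiction is also simpler than your two-sequence interleaving: a single sequence of dyadic Ces\`aro means $Y_n=2^{-n}\sum_{k=1}^{2^n}X_k$ is Cauchy in probability by Koml\'os, yet the coefficient vector of $Y_n-Y_m$ has total variation at least $1$, so $\E_\bP[\abs{Y_n-Y_m}\one_B]\geq c'$, contradicting the choice of $B$. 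So either cite the $\ell^1$-embedding theorem as the paper does, or supply the splitting/diagonal argument; as written, the key separation estimate is asserted rather than proved.
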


\begin{proof}
Let $\Q$ be a probability measure and suppose that $(X_n)$ is a sequence of random variables that is  Cauchy in probability and is $\Q$-uniformly integrable.  Then $(X_n)$ converges in probability to some $X\in \ZL(\Q)$.
Since $(X_n)$ is $\Q$-uniformly integrable, $X$ is $\Q$-integrable and
$(X_n)$ converges  to $X$ in $\OL(\Q)$.
Therefore, $(X_n)$ is $\OL(\Q)$-Cauchy.
Using this observation, the implications \eqref{p2.21}$\implies$\eqref{p2.22}$\implies$\eqref{p2.23} can be shown exactly as in the corresponding steps in Proposition \ref{p2.1}.

The proof of \eqref{p2.24}$\implies$\eqref{p2.21} is also similar to the proof of \eqref{p2.13}$\implies$\eqref{p2.11} in Proposition \ref{p2.1}.
Define $\xi: \Sig\to \{0,1\}$ by $\xi(A) =1$ if  $\cK_A$ is $\bP$-uniformly integrable.
Let $C$ be obtained by applying Lemma \ref{l2.0} to $\xi$.
It follows from the assumption \eqref{p2.24} that $\bP(C) =1$.
Take an increasing sequence of measurable sets $(A_k)$ such that $\xi(A_k) =1$ for all $k\in\N$, $\bP(A_k) \longrightarrow 1$, and $\Omega=\cup_{k=1}^\infty A_k$.
Set $A_0 = \emptyset$ and define $Y$ to be $\frac{1}{2^k}$ on the set $A_{k} \bs A_{k-1}$ for any $k\in\N$.
Let $\mathrm{d}\Q = \frac{Y}{\E_\bP[Y]}\mathrm{d}\bP$. Then  $\Q \sim \bP$.
We claim that $\cK$ is $\Q$-uniformly integrable.  Clearly, $\cK$ is bounded in $\OL(\Q)$. Set $c = \sup_{X\in \cK}\E_\bP[\abs{X}]$.
Let $\ep >0$ be given.
Choose $k$ large enough so that $\frac{c}{2^{k}\E_\bP[Y]}\leq \ep$.
Since $\xi(A_k) =1$, $\cK_{A_k}$ is $\bP$-uniformly integrable.
Therefore, there exists $\delta >0$ such that $$\sup_{X\in \cK}\E_\bP\big[\abs{X}\one_B\big] < \frac{\ep\E_\bP[Y]}{2}\;\text{ if }B\subseteq A_k\text{ and }\bP(B) < \delta.$$
Now, take any $A\in \Sig$ such that $\Q(A) < \frac{\delta}{2^k\E_\bP[Y]}$.
Let $B_1 = A\cap A_k$  and $B_2 = A\bs A_k$.
Since $Y \geq \frac{1}{2^k}$ on $A_k$,
$\Q(B_1) = \frac{1}{\E_\bP[Y]}\E_\bP[Y\one_{B_1}]\geq \frac{1}{2^k\E_\bP[Y]}\bP(B_1)$, so that
\[ \bP(B_1) \leq 2^k\E_\bP[Y]\Q(B_1) \leq  2^k\E_\bP[Y]\Q(A) <\delta.\]
Thus, for any $X\in \cK$, $\E_\bP\big[\abs{X}\one_{B_1}\big]  < \frac{\ep\E_\bP[Y]}{2}$.  Moreover, note that $0 \leq Y\leq \frac{1}{2^{k+1}}$ on $A_k^c\supset B_2$.  Thus, if $X\in \cK$, then
\begin{align*}
\E_\Q\big[\abs{X}\one_A\big]  = & \frac{1}{\E_\bP[Y]}\E_\bP\big[\abs{X}Y\one_{B_1}\big]+\frac{1}{\E_\bP[Y]}
\E_\bP\big[\abs{X}Y\one_{B_2}\big] \\
\leq & \frac{1}{\E_\bP[Y]}\E_\bP\big[\abs{X}\one_{B_1}\big]+\frac{c}{\E_\bP[Y]2^{k+1}}\\
\leq &\frac{\ep}{2}+\frac{\ep}{2}=\ep.
\end{align*}
This proves that $\cK$ is $\Q$-uniformly integrable, and thus \eqref{p2.24}$\implies$\eqref{p2.21}.

Assume that \eqref{p2.23} holds.  Let $A$ be a measurable set with $\bP(A) >0$.  Choose a measurable subset $B$ of $A$ with $\bP(B) >0$ as in Condition \eqref{p2.23}.  We aim to show that $\cK_B$ is $\bP$-uniformly integrable.
Suppose the contrary.
By \cite[Theorem 5.2.9]{AK:06}, there exist a real number $c' > 0$ and  a sequence $(X_n)$ in $\cK$ such that for any $n\in\N$ and any real numbers $a_1,\dots,a_n$,
$$\E_\bP\Big[\Bigabs{\sum_{k=1}^na_kX_k\one_B}\Big]\geq c'\sum_{k=1}^n\abs{a_k}.$$
Applying Koml\'{o}s' Theorem and relabeling, we may assume that the arithmetic means of $(X_n)$ converge to some $X\in \ZL(\mathbb{P})$ a.s. Put
$$Y_n=\frac{1}{2^n}\sum_{k=1}^{2^n}X_k.$$
Clearly, $(Y_n)\subset \cK$ is Cauchy in probability, and thus by choice of $B$, $\big(Y_n\one_B\big)$ is Cauchy in $\OL(\bP)$. On the other hand, whenever $n>m$,
\begin{align*}
\E_\bP\big[\bigabs{Y_n\one_B-Y_m\one_B}\big]=&  \E_\bP\Big[\Bigabs{\sum_{k=1}^{2^m}\big(\frac{1}{2^n}-\frac{1}{2^m}\big)X_k
\one_B
+\sum_{k=2^m+1}^{2^n}\frac{1}{2^n}X_k\one_B}\Big]\\
&\geq c'\Big(\sum_{k=1}^{2^m}\big(\frac{1}{2^m}-\frac{1}{2^n}\big)+
\sum_{k=2^m+1}^{2^n}\frac{1}{2^n}\Big)\\
&= c'\Big(1-\frac{2^m}{2^n}+\frac{2^n-2^m}{2^n}\Big)
\geq c'.
\end{align*}
This contradiction completes the proof.
\end{proof}

The next corollary clarifies the relationship between Conditions \eqref{t24} and \eqref{t25} of Theorem \ref{t2} and answers the questions (Q2) and (Q2+) in the positive.

\begin{cor}\label{c2.3}
Let $\cK$ be a convex bounded subset of $\OL(\bP)$.
The following are equivalent.
\begin{enumerate}
\item\label{c2.31} There exists $\Q \sim \bP$ such that $\cK$ is $\Q$-uniformly integrable.
    \item\label{c2.32} There exists $\Q \sim \bP$ such that $\ol{\cK}$ is $\Q$-uniformly integrable, where the closure is taken in the $\ZL(\bP)$-topology.
\item\label{c2.33} There exists $\Q\sim \bP$ such that the $\ZL(\Q)$- and $\OL(\Q)$-topologies agree on $\ol{\cK}$.
\end{enumerate}
\end{cor}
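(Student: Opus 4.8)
The plan is to deduce Corollary \ref{c2.3} from Proposition \ref{p2.2} together with a soft observation about how $\ZL(\bP)$-closure interacts with the Cauchy-in-probability condition. The implications \eqref{c2.32}$\implies$\eqref{c2.31} and \eqref{c2.33}$\implies$\eqref{c2.31} are immediate, since $\cK \subseteq \ol{\cK}$ and any $\Q \sim \bP$ witnessing a property of $\ol{\cK}$ witnesses the same property of $\cK$ (for \eqref{c2.33}$\implies$\eqref{c2.31} one uses in addition the fact, recorded in the first paragraph of the proof of Proposition \ref{p2.2}, that if the $\ZL(\Q)$- and $\OL(\Q)$-topologies agree on a set then $\Q$-Cauchy-in-probability sequences in that set are $\OL(\Q)$-Cauchy, so the set is $\Q$-uniformly integrable by \cite[Theorem 5.2.9]{AK:06} — or more simply one just invokes that agreement of topologies on $\ol\cK$ gives $\Q$-uniform integrability of $\ol\cK$ directly, then restrict). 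Likewise \eqref{c2.31}$\implies$\eqref{c2.33} is standard: if $\cK$ is $\Q$-uniformly integrable then so is its $\OL(\Q)$-closure, which contains $\ol{\cK}$ because $\ZL(\Q)=\ZL(\bP)$-convergence is weaker than $\OL(\Q)$-convergence but $\Q$-uniform integrability upgrades $\ZL(\Q)$-convergent sequences to $\OL(\Q)$-convergent ones; hence $\ol{\cK}$ is $\Q$-uniformly integrable and the two topologies agree on it. So the substantive content is \eqref{c2.31}$\implies$\eqref{c2.32}.

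For \eqref{c2.31}$\implies$\eqref{c2.32} I would not try to produce $\Q$ directly from a $\Q$ for $\cK$; instead I would run Proposition \ref{p2.2} for the set $\ol\cK$. Note first that $\ol\cK$ is again convex and bounded in $\OL(\bP)$: convexity of the $\ZL(\bP)$-closure of a convex set is routine, and boundedness in $\OL(\bP)$ follows from Fatou's lemma (as used already in the proof of \eqref{p2.13}$\implies$\eqref{p2.11}) since a $\ZL(\bP)$-limit of a sequence in $\cK$ has $\OL(\bP)$-norm at most $c = \sup_{X\in\cK}\E_\bP[\abs X]$. Thus Proposition \ref{p2.2} applies to $\ol\cK$, and it suffices to verify condition \eqref{p2.24} for $\ol\cK$: for every measurable $A$ with $\bP(A)>0$ there is a measurable $B \subseteq A$ with $\bP(B)>0$ such that $(\ol\cK)_B$ is $\bP$-uniformly integrable. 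But given such an $A$, apply condition \eqref{p2.24} of Proposition \ref{p2.2} to the set $\cK$ (which holds, since \eqref{c2.31} gives \eqref{p2.21} which gives \eqref{p2.24}): we get $B \subseteq A$, $\bP(B)>0$, with $\cK_B$ $\bP$-uniformly integrable.

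The remaining point is the key step: $\cK_B$ $\bP$-uniformly integrable implies $(\ol\cK)_B$ $\bP$-uniformly integrable. Here I would argue that the map $X \mapsto X\one_B$ is continuous from $\ZL(\bP)$ to $\ZL(\bP)$, so $(\ol\cK)_B \subseteq \ol{\cK_B}$ (closure in $\ZL(\bP)$). Now $\cK_B$, being $\bP$-uniformly integrable and bounded in $\OL(\bP)$, is relatively weakly compact in $\OL(\bP)$ by the Dunford--Pettis theorem; its weak closure is weakly compact hence $\bP$-uniformly integrable, and the $\ZL(\bP)$-closure $\ol{\cK_B}$ is contained in that weak closure — because a $\ZL(\bP)$-convergent sequence in the uniformly integrable set $\cK_B$ converges in $\OL(\bP)$-norm (by Vitali), a fortiori weakly, to the same limit, so every element of $\ol{\cK_B}$ is a weak limit of elements of $\cK_B$ and thus lies in the weak closure. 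Therefore $\ol{\cK_B}$, and with it $(\ol\cK)_B$, is $\bP$-uniformly integrable, completing the verification of \eqref{p2.24} for $\ol\cK$. (Alternatively, one can avoid Dunford--Pettis and argue directly with the $\varepsilon$--$\delta$ characterization of uniform integrability plus a truncation/Fatou estimate on $\ZL$-limits; either way is short.) The only mild obstacle is making sure the passage from $\cK_B$ to its closure genuinely preserves uniform integrability rather than merely $\OL$-boundedness, which is exactly what the Dunford--Pettis / Vitali argument secures; everything else is bookkeeping with the earlier propositions.
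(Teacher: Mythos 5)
Your treatment of the forward direction is essentially correct: showing that $\Q$-uniform integrability of $\cK$ passes to the $\ZL(\bP)$-closure (via Vitali, or alternatively via Proposition \ref{p2.2}\eqref{p2.24} together with Dunford--Pettis) is sound, and the Dunford--Pettis detour is a workable variant of the paper's direct Fatou estimate (though it is redundant once your Vitali argument has already given $\Q$-uniform integrability of $\ol{\cK}$ for the \emph{same} $\Q$). The genuine gap is the backward direction, which you dispose of in a parenthesis. The implication \eqref{c2.33}$\implies$\eqref{c2.31} is not immediate --- it is the substantive content of the corollary (it is what answers (Q2), and the analogous statement without $\ZL$-closedness fails, by Kardaras's example of $\{X\ge 0:\E[X]=1\}$). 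Your first justification --- Cauchy-in-probability sequences in $\ol{\cK}$ are $\OL(\Q)$-Cauchy, ``so the set is $\Q$-uniformly integrable by \cite[Theorem 5.2.9]{AK:06}'' --- is a leap: that theorem only says that a bounded, non-uniformly-integrable set contains a sequence equivalent to the $\ell^1$-basis; to contradict the Cauchy property one must manufacture from that sequence a sequence which is Cauchy in probability but not $\OL$-Cauchy, and this is exactly the Koml\'os forward-means construction that carries the weight of Proposition \ref{p2.2}. Moreover, running \cite[Theorem 5.2.9]{AK:06} (or Koml\'os, or Proposition \ref{p2.2} itself) with $\Q$ as reference measure requires $\ol{\cK}$ to be bounded in $\OL(\Q)$, which does not follow from boundedness in $\OL(\bP)$ and which you never verify (it can be deduced from convexity, boundedness in probability and the agreement of topologies, but that is an argument, not a triviality). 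The ``more simply, one just invokes'' alternative asserts precisely the conclusion to be proved.

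The repair is what the paper does: use \eqref{c2.33} to verify condition \eqref{p2.22} of Proposition \ref{p2.2} for $\ol{\cK}$ \emph{with respect to $\bP$}. Given $\ep>0$, Proposition \ref{p2.1} applied to $\ol{\cK}$ with $\cS=\ol{\cK}$ (the implication \eqref{p2.11}$\implies$\eqref{p2.12}, i.e.\ the $\{\frac{\mathrm{d}\Q}{\mathrm{d}\bP}\ge r\}$ truncation trick) yields a set $A$ with $\bP(A)>1-\ep$ such that $\ZL$-convergence of sequences of $\ol{\cK}$ to points of $\ol{\cK}$ upgrades to convergence of $\E_\bP[\abs{X_n-X}\one_A]$; since $\ol{\cK}$ is $\ZL$-closed, every Cauchy-in-probability sequence in $\ol{\cK}$ has its limit in $\ol{\cK}$, so $\E_\bP[\abs{X_n-X_m}\one_A]\to 0$. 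Proposition \ref{p2.2} then produces a (possibly different) $\Q'\sim\bP$ making $\ol{\cK}$, hence $\cK$, uniformly integrable. Note that \eqref{c2.31} only asks for \emph{some} equivalent measure; your shortcut insists on keeping the $\Q$ from \eqref{c2.33}, which is in fact true for closed sets but only after rerunning the Koml\'os argument (plus the $\OL(\Q)$-boundedness step) that your citation of \cite[Theorem 5.2.9]{AK:06} conceals.
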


\begin{proof}
Let $\Q$ be as given in Condition \eqref{c2.31}.
Let $\ep >0$ be given.  Then there exists $\delta > 0$ such that $\E_\Q\big[\abs{X}\one_A\big] <\ep$ if $X\in \cK$ and $\Q(A) <\delta$.
Suppose that $X\in \ol{\cK}$ and $\Q(A) < \delta$.  Choose a sequence $(X_n)$ in $\cK$ that converges to $X$ in $\ZL(\bP)$.  Then it also converges to $X$ in $\ZL(\Q)$.
Thus by Fatou's Lemma, $$\E_\Q\big[\abs{X}\one_A\big]\leq \liminf_n\E_\Q\big[\abs{X_n}\one_A\big]\leq \ep.$$
Moreover, since $\cK$ is bounded in $\OL(\Q)$, a similar argument shows that $\ol{\cK}$ is also bounded in $\OL(\Q)$. Thus $\ol{\cK}$ is $\Q$-uniformly integrable.
This proves \eqref{c2.31}$\implies$\eqref{c2.32}.

The implication \eqref{c2.32}$\implies$\eqref{c2.33} is clear.

Assume that \eqref{c2.33} holds.  We apply Proposition \ref{p2.1} to $\ol{\cK}$ with $\cS = \ol{\cK}$.
For any $\ep >0$, we obtain a measurable set $A$ with $\bP(A) > 1-\ep$ such that if $(X_n)$ is a sequence in $\ol{\cK}$ that converges to  $X\in \ol{\cK}$ in probability, then $\E_\bP\big[\abs{X_n-X}\one_A\big]\longrightarrow0$.
Now let  $(X_n)$ be any sequence  in $\ol{\cK}$ that is  Cauchy in probability. Since $\ol{\cK}$ is closed in $\ZL(\bP)$, $(X_n)$ converges in probability to some $X\in \ol{\cK}$.  Therefore, $\E_\bP\big[\abs{X_n-X}\one_A\big]\longrightarrow0$, and thus  $\E_\bP\big[\abs{X_n-X_m}\one_A\big]\longrightarrow0$ as $n,m\to\infty$.  We have thus verified Condition \eqref{p2.22} of Proposition \ref{p2.2} for the set $\ol{\cK}$ and therefore for the set $\cK$.  By the same result, Condition \eqref{c2.31} holds.
This proves \eqref{c2.33}$\implies$\eqref{c2.31}.
\end{proof}

Notice that Theorem \ref{t3} is an immediate consequence of Corollary \ref{c2.3}.

\section{Uniformly locally convex-solid topologies}

In this section, we first characterize topologically the $\OL(\bP)$-bounded convex sets $\cK$ on which  there exists $\Q\sim\bP$ such that the $\ZL(\Q)$- and $\OL(\Q)$-topologies agree.
The condition, as indicated in Theorem~\ref{t4}, is precisely that the relative $\ZL(\bP)$-topology is uniformly locally convex-solid on $\cK$, which is introduced in Section \ref{intro}. We begin our exploration with a result of the Hahn-Banach   theorem spirit.
Similar results of this type have been achieved in a recent paper \cite{GLX:18}, where the authors established a ``localized'' Hahn-Banach theorem on a vector space and applied it to study the uo-dual of a Banach lattice, resulting in a very transparent proof of Theorem~\ref{t2}. The following result is an extension of their approach, embracing solidity.

\begin{prop}\label{sep2}
Let $\cK$ be a convex set in $\OL(\bP)$ and let $\cS$ be a nonempty subset of $\cK$.
Suppose that the relative  $\ZL(\bP)$-topology on $\cK$ is uniformly locally convex-solid on $\cS$.
Then for any  measurable set $A$ with $\bP(A) >0$, there exists a nonzero random variable $Y\in \mathbb{L}^\infty_+(\bP)$, supported in $A$, such that
$\E_\bP\big[\abs{X_n-X}Y\big]\longrightarrow 0$ for any sequence $(X_n)$ in $\cK$ that converges to some $X\in \cS$ in probability.
\end{prop}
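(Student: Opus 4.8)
The plan is to extract from the uniform local convex-solid structure a single weak-star limit point of the "local" functionals and show it is nonzero. Fix $A$ with $\bP(A) > 0$. Pick a $\ZL(\bP)$-neighborhood $\cU$ of $0$ small enough that membership in $\cU$ forces smallness on $A$ — concretely, something like $\cU = \{Z : \bP(\abs{Z} > \eta) < \eta\}$ for $\eta$ chosen so that any $Z \in \cU$ has $\E_\bP[\min(\abs{Z},1)\one_A] < \tfrac14 \bP(A)$. By hypothesis there is a convex-solid $\cW \subseteq \cU$ such that $(X + \cW) \cap \cK$ is a relative neighborhood of $X$ for every $X \in \cS$; in particular, for each $X \in \cS$, if $(X_n) \subseteq \cK$ converges to $X$ in probability then eventually $X_n - X \in \cW$.

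The key device is to truncate and pass to a functional. Replace $\cK$ by $\cK_A := \{Z\one_A : Z \in \cK\}$ — actually I would work directly with the convex-solid set $\cV := \cW \cap \{Z : \abs{Z} \le \one_A\}$, which is a convex, solid, bounded (hence $\OL$-bounded) subset of $\OL(\bP)$ supported in $A$ and contained in $\cU$. Because $\cV$ is solid and bounded in $\OL$, the set $\{Z \in \OL(\bP) : \abs{Z} \le W \text{ for some } W \in \overline{\cV}^{\,w}\}$-type closure, or more simply the functional
\[
p(Z) = \inf\{\lambda > 0 : Z/\lambda \in \cV\} \quad (Z \text{ supported in } A),
\]
is a convex-solid gauge. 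The first main step is to show $\cV$ is "large" inside $\{|Z|\le \one_A\}$: since $(X+\cW)\cap\cK$ is a relative neighborhood of each $X$, and $\cK$ is absorbing enough in relevant directions, $\cV$ absorbs a nontrivial cone of functions supported in $A$. From this, a Hahn–Banach / Minkowski-gauge argument in the spirit of \cite{GLX:18} produces a positive functional $\phi$ on $\OL$-functions supported in $A$ with $\phi \le Cp$, $\phi \ne 0$, and — crucially — $\phi$ solid and dominated by the gauge of $\cU$ restricted to $A$, which by the choice of $\cU$ forces $\phi$ to be order-continuous. Order-continuity of a positive functional on $\OL(\bP)$ means it is represented by some $Y \in \mathbb{L}^\infty_+(\bP)$, and solidity plus the support condition give $Y$ supported in $A$ and nonzero.

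The last step is to verify the convergence conclusion. Given $(X_n) \subseteq \cK$ with $X_n \to X \in \cS$ in probability, eventually $X_n - X \in \cW$; truncating, $(X_n - X)\one_A$ combined with the solidity of $\cV$ gives, for every $\ep > 0$, eventually $(X_n - X)/\ep \in$ a scaled version of $\cV$ (using that $X_n - X \to 0$ in probability, so the untruncated tails are negligible), whence $p((X_n-X)\one_A) \to 0$, and therefore $\E_\bP[\abs{X_n - X}Y] = \phi(\abs{X_n-X}\one_A) \le Cp(\abs{X_n-X}\one_A) \to 0$; the part of $\abs{X_n-X}$ off $A$ contributes nothing since $Y$ is supported in $A$. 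The main obstacle I anticipate is the separation/extension step: one must produce a functional that is simultaneously positive, nonzero, solid, and order-continuous, and the order-continuity is exactly where the careful choice of $\cU$ (so that $\cW$-membership forces $\OL(\bP)$-smallness on $A$, not merely smallness in probability) has to be exploited — this is the heart of the argument and the place where the "localized Hahn–Banach with solidity" from \cite{GLX:18} must be adapted rather than quoted verbatim.
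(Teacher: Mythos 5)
Your overall strategy (Minkowski gauge of a convex-solid set, Hahn--Banach, representation by an $\mathbb{L}^\infty$ density, solidity to handle signs) is the right one, but the proposal has a genuine gap at the decisive step. You work with a \emph{single} convex-solid set $\cW\subseteq\cU$ and its truncation $\cV=\cW\cap\{\abs{Z}\le\one_A\}$. From $X_n-X\in\cW$ eventually, solidity only gives $p\big((\abs{X_n-X}\wedge\one)\one_A\big)\le 1$ eventually, i.e.\ a \emph{bound}, not decay; and your claim that ``for every $\ep>0$, eventually $(X_n-X)/\ep$ lies in a scaled version of $\cV$'' does not follow from the hypothesis. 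The definition of uniform local convex-solidity supplies, for \emph{each} neighborhood of $0$, \emph{some} convex-solid set with the relative-neighborhood property; it does not say that the scalings $\ep\cW$ of a fixed $\cW$ retain that property. Moreover, the untruncated tails $\abs{X_n-X}\one_{\{\abs{X_n-X}>1\}}\one_A$ are controlled only in probability, and converting smallness in probability into smallness of $\E_\bP[\,\cdot\,Y]$ is exactly the content of the proposition --- it cannot be assumed. So your argument, as written, yields at best $\limsup_n\E_\bP[\abs{X_n-X}Y]\le C$ for a constant $C$, not convergence to $0$.

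The paper's proof repairs precisely this point by using a whole \emph{sequence} of scales: it chooses $\ZL(\bP)$-neighborhoods $\cV,\cU_k$ with $(\cV+\cU_1+\cU_1)\cap(\one_A+\cV+\cU_1+\cU_1)=\emptyset$ and $\cU_k+k\cU_k\subseteq\cU_{k-1}$, picks for each $k$ a convex-solid $\cW_k\subseteq\cU_k$ working uniformly on $\cS$, and forms the single convex, solid, absorbing set $\cC=\cup_k\big(r\cB_{\OL(\bP)}+\cW_1+2\cW_2+\cdots+k\cW_k\big)$, which still avoids $\tfrac{\one_A}{2}$. Its gauge $\rho$ then gives the quantitative decay: if $X_n-X\in\cW_k$ eventually, then $k\abs{X_n-X}\one_A\sgn(Y_0)\in k\cW_k\subseteq\cC$, so $\E_\bP[\abs{X_n-X}\one_A\abs{Y_0}]\le 1/k$ eventually --- this is the mechanism your single-$\cW$ setup lacks. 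Two further remarks: the difficulties you flag as the heart of the matter (positivity and order-continuity of the extended functional) are in fact non-issues --- since $r\cB_{\OL(\bP)}\subseteq\cC$, the Hahn--Banach extension is $\OL(\bP)$-norm bounded and hence automatically given by some $Y_0\in\mathbb{L}^\infty(\bP)$, and one simply takes $Y=\abs{Y_0}\one_A$, using solidity of $\cW_k$ to insert $\sgn(Y_0)$; also, your ``largeness/absorption'' claim for $\cV$ inside $\{\abs{Z}\le\one_A\}$ is unsupported ($\cK$ is merely convex, not absorbing) and is not needed --- nonvanishing of the functional comes from separating $\one_A$ from $\cC$ via the initial choice of neighborhoods, not from largeness of $\cW$.
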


\begin{proof}Since $\one_A\neq0$, we can inductively choose $\ZL(\bP)$-neighborhoods $\cV$ and $\cU_k$ of $0$ such that
\[ (\cV+ \cU_1+\cU_1)\cap (\one_A+\cV+\cU_1+\cU_1) = \emptyset, \]
\[ \cU_k + k\cU_k \subseteq \cU_{k-1}, \quad \text{ if $k > 1$}.
\]
It is easily verified by induction that
\begin{equation}\label{disjoint}
(\cV+ \cU_1 + 2\cU_2+ \cdots +k\cU_k+\cU_k) \cap (\one_A+\cV + \cU_1+ 2\cU_2+ \cdots +k\cU_k+\cU_k) = \emptyset \end{equation}
for all $k \geq 1$.
For each $k\geq 1$, choose a convex solid set $\cW_k \subseteq \cU_k$  such that, for any $X\in \cS$, $(X+\cW_k)\cap \cK$ is  a neighborhood of $X$ in the relative $\ZL(\bP)$-topology on $\cK$.  Replace $\cW_k$ by $\cW_k\cap \OL(\bP)$, if necessary, to assume that $\cW_k \subseteq \OL(\bP)$.
Let $\mathcal{B}_{\OL(\bP)}$ be the closed unit ball of $\OL(\bP)$.
Since $\cV$ is an $\ZL(\bP)$-neighborhood of $0$, there exists $r>0$ such that $r\mathcal{B}_{\OL(\bP)} \subseteq \cV$.
Set
\[ \mathcal{C}_k = r\mathcal{B}_{\OL(\bP)} + \cW_1 + 2\cW_2 + \cdots + k\cW_k \; \text{ for each $k$.}\]
Since $\mathcal{B}_{\OL(\bP)}$ and each $\cW_k$ are convex and solid in $\OL(\bP)$, $\mathcal{C}_k$ is also convex and solid in $\OL(\bP)$ for all $k\in\N$ (solidity easily follows from the Riesz Decomposition Theorem \cite[Theorem 1.13]{AB:06}). Moreover, $k\cW_k \subseteq \mathcal{C}_k \subseteq \mathcal{C}_{k+1}$ for all $k$.
Let $$\mathcal{C} = \cup_{k=1}^\infty \mathcal{C}_k.$$  Then it is easily seen that $\mathcal{C}$ is  a convex  solid set in $\OL(\bP)$.  Since $r\mathcal{B}_{\OL(\bP)} \subseteq \mathcal{C}$, $\mathcal{C}$ absorbs $\OL(\bP)$, that is, for any $X\in \OL(\bP)$, $X\in t\mathcal{C}$ whenever $\abs{t}\geq t_0$ for some $t_0\in\R$.

Let $\rho:\OL(\bP) \to \R$ be the Minkowski functional for  $\mathcal{C}$ defined by $$\rho(X)=\inf\big\{\lambda>0:\frac{X}{\lambda}\in \mathcal{C}\big\}.$$
Then $\rho$ is a seminorm on $\OL(\bP)$ (see, e.g., \cite[Theorem~1.35]{R:91}). Note that since $\cC_k$ is solid, $\cC_k=-\cC_k$, and thus $\cC_k-\cC_k=2\cC_k$ by convexity of $\cC_k$. Since $ \mathcal{C}_k \subseteq \cV+ \cU_1 + 2\cU_2+ \cdots +k\cU_k+\cU_k$,  $\mathcal{C}_k \cap (\one_A+\mathcal{C}_k) = \emptyset$ by \eqref{disjoint}, so that $\one_A\notin \mathcal{C}_k-\mathcal{C}_k=2\mathcal{C}_k$ for any $k\in\N$.
Thus $\frac{\one_A}{2}\notin \mathcal{C}$, and therefore, $\rho(\one_A) \geq 2$. Define $\phi_0: \Span\{\one_A\} \to \R$ by $\phi_0(\al \one_A) = 2 \al$.
Then $\phi_0$ is a linear functional on $\Span\{\one_A\}$ and $\phi_0(\alpha \one_A)=2\alpha\leq 2\abs{\alpha}\leq \rho(\alpha \one_A)$ for any $\alpha\in\R$.
By the vector-space version of Hahn-Banach Theorem (see, e.g., \cite[Theorem~3.2]{R:91}), there is a linear functional $\phi:\OL(\bP) \to \R$ that extends $\phi_0$ and such that $\phi(X) \leq \rho(X)$ for all $X\in \OL(\bP)$. In particular, $\phi(\one_A) =\phi_0(\one_A)=2\neq 0$.
As $r\mathcal{B}_{\OL(\bP)}\subseteq \cC$, $\phi$ is bounded with respect to the $\OL(\bP)$-norm.  Hence there exists $Y_0\in \mathbb{L}^\infty(\bP)$ such that $$\phi(X)  =\E_\bP[XY_0]$$ for all $X\in \OL(\bP)$.
In particular, $\E[\one_AY_0] \neq 0$ and hence $Y_0\one_A \neq 0$.
Set $Y =|Y_0|\one_A$.  Then $Y$ is a nonzero random variable in ${\mathbb{L}}^\infty_+(\bP)$ that is supported in $A$.

Suppose that $(X_n)$ is a sequence in $\cK$ that converges to some $X\in \cS$ in the $\ZL(\bP)$-topology.
Pick any $k\in \N$.  Since $(X+\cW_k)\cap \cK$ is a neighborhood of $X$ with respect to the relative $\ZL(\bP)$-topology on $\cK$, there exists $N \in \N$ such that
$X_n \in X +\cW_k$ if $n\geq N$.  Consider any $n\geq N$. As $\cW_k$ is solid, $\abs{X_n-X}\one_A\sgn (Y_0) \in \cW_k$. Hence $k\abs{X_n-X}\one_A\sgn (Y_0)\in k\cW_k \subseteq \mathcal{C}_k\subseteq \mathcal{C}$.
Therefore, $\rho\big(k\abs{X_n-X}\one_A\sgn (Y_0)\big) \leq 1$.
It follows that $\E_\bP\big[ k\abs{X_n-X}\one_A\abs{Y_0}\big] = \phi\big(k\abs{X_n-X}\one_A\sgn (Y_0))\leq 1$, and thus
$$\E_\bP\big[ \abs{X_n-X}\one_A\abs{Y_0}\big]\leq \frac{1}{k}\quad\text{for any }n\geq N.$$
This proves that $\E_\bP\big[ \abs{X_n-X}\one_A\abs{Y_0}\big]\longrightarrow 0$.
\end{proof}

We now prove a slightly stronger version of Theorem \ref{t4}. In the proof below, we use the specific metric $d(X,Y) = \E_\bP[\abs{X-Y}\wedge \one]$ to generate the $\ZL(\bP)$-topology.  Balls $\cB(X,r)$ are taken with respect to this metric for any $X\in \ZL(\bP)$ and any $r>0$.

\begin{thm}\label{lcsolid}
Let $\cK$ be a convex bounded subset of $\OL(\bP)$ and let $\cS$ be a nonempty subset of $\cK$.  The following are equivalent.
\begin{enumerate}
\item\label{lcsolid1} The relative $\ZL(\bP)$-topology on $\cK$ is uniformly  locally convex-solid on $\cS$.
\item\label{lcsolid2} There exists $\Q \sim \bP$ such that if $(X_n)$ is a sequence in $\cK$ that converges in probability to some $X\in \cS$, then $(X_n)$ converges to $X$ in $\OL(\Q)$.
\end{enumerate}
\end{thm}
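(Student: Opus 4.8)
The plan is to prove the two implications separately; the substantive work is in \eqref{lcsolid2}$\implies$\eqref{lcsolid1}, where one must manufacture a \emph{single} convex-solid set inside a prescribed $\ZL(\bP)$-neighborhood of $0$ that simultaneously produces relative neighborhoods of every point of $\cS$.

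For \eqref{lcsolid1}$\implies$\eqref{lcsolid2} I would simply feed Proposition~\ref{sep2} into Proposition~\ref{p2.1}. Assuming the relative $\ZL(\bP)$-topology on $\cK$ is uniformly locally convex-solid on $\cS$, fix an arbitrary measurable $A$ with $\bP(A)>0$ and let $Y\in\mathbb{L}^\infty_+(\bP)$ be the nonzero random variable supported in $A$ furnished by Proposition~\ref{sep2}, so that $\E_\bP[\abs{X_n-X}Y]\to0$ whenever $(X_n)\subseteq\cK$ converges in probability to some $X\in\cS$. Since $Y\neq0$ is bounded, there is $\delta>0$ with $\bP(B)>0$ for $B:=\{Y\geq\delta\}\cap A$, and $\one_B\leq\delta^{-1}Y$ yields $\E_\bP[\abs{X_n-X}\one_B]\leq\delta^{-1}\E_\bP[\abs{X_n-X}Y]\to0$. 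As $A$ was arbitrary, Condition~\eqref{p2.13} of Proposition~\ref{p2.1} holds for $\cK$ and $\cS$, whence Condition~\eqref{p2.11} of that proposition holds as well, and this is exactly \eqref{lcsolid2}.

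For the reverse implication, fix $\Q\sim\bP$ as in \eqref{lcsolid2} and put $Y=\frac{\mathrm{d}\Q}{\mathrm{d}\bP}>0$ a.s. Given a $\ZL(\bP)$-neighborhood $\cU$ of $0$, choose $\eta>0$ with $\cB(0,\eta)\subseteq\cU$, then pick $r>0$ so that $A:=\{Y\geq r\}$ satisfies $\bP(A^c)<\eta/2$, and set
\[ \cW=\Big\{Z\in\OL(\bP):\E_\bP\big[\abs{Z}\one_A\big]<\tfrac{\eta}{2}\Big\}. \]
Being a sublevel set of the seminorm $Z\mapsto\E_\bP[\abs{Z}\one_A]$, $\cW$ is convex, and it is clearly solid; and for $Z\in\cW$ one has $\E_\bP[\abs{Z}\wedge\one]\leq\E_\bP[\abs{Z}\one_A]+\bP(A^c)<\eta$, so $\cW\subseteq\cB(0,\eta)\subseteq\cU$. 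It then remains to verify, for each fixed $X\in\cS$, that $(X+\cW)\cap\cK$ is a relative $\ZL(\bP)$-neighborhood of $X$. Were this false for some $X$, there would be a sequence $(X_n')$ in $\cK$ with $d(X_n',X)<1/n$ and $\E_\bP[\abs{X_n'-X}\one_A]\geq\eta/2$ for all $n$. But $d(X_n',X)\to0$ gives $X_n'\to X$ in probability, so \eqref{lcsolid2} forces $\E_\Q[\abs{X_n'-X}]\to0$, and hence, using $\one_A\leq Y/r$,
\[ \E_\bP\big[\abs{X_n'-X}\one_A\big]\leq\tfrac{1}{r}\E_\bP\big[\abs{X_n'-X}Y\big]=\tfrac{1}{r}\E_\Q\big[\abs{X_n'-X}\big]\longrightarrow0, \]
a contradiction. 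Thus for each $X\in\cS$ there is $r_X>0$ with $\cK\cap\cB(X,r_X)\subseteq X+\cW$, and since $\cK\cap\cB(X,r_X)$ is relatively open in $\cK$ and contains $X$, the set $(X+\cW)\cap\cK$ is a relative neighborhood of $X$. As $\cW$ depends only on $\cU$, this proves \eqref{lcsolid1}.

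I expect the only genuinely delicate step to be the choice of $\cW$ in the reverse implication: the idea is to localize an $\OL$-type ball to the large set $A=\{Y\geq r\}$, on which the density bound $Y\geq r$ upgrades the $\OL(\Q)$-estimate handed over by \eqref{lcsolid2} to a $\ZL(\bP)$-estimate, while $A^c$ is $\bP$-small enough to contribute nothing to $\ZL(\bP)$-membership. A secondary, routine matter is the move from sequential convergence in \eqref{lcsolid2} to the ``ball'' version used above, justified by metrizability of the $\ZL(\bP)$-topology via $d(X,Y)=\E_\bP[\abs{X-Y}\wedge\one]$. Note that neither implication needs boundedness of $\cK$ or Koml\'{o}s' theorem; those enter only later, through Proposition~\ref{p2.2} and Corollary~\ref{c2.3}.
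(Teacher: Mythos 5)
Your proof is correct and follows essentially the same route as the paper's: \eqref{lcsolid1}$\implies$\eqref{lcsolid2} is exactly the paper's combination of Proposition \ref{sep2} with Condition \eqref{p2.13} of Proposition \ref{p2.1}, and for the converse the paper likewise builds a convex-solid $\cW$ from the density $Y=\frac{\mathrm{d}\Q}{\mathrm{d}\bP}$ (it takes $\cW=\{X\in\OL(\bP):\E_\Q[\abs{X}]<s\}$ and splits on $\{Y\geq\delta\}$, while you take the localized ball $\{Z\in\OL(\bP):\E_\bP[\abs{Z}\one_{\{Y\geq r\}}]<\eta/2\}$ and spell out, via a sequential contradiction, the step the paper dismisses as ``it follows easily''). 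One small caveat: your closing remark that neither implication needs boundedness of $\cK$ is not quite accurate as written, since your \eqref{lcsolid1}$\implies$\eqref{lcsolid2} is routed through Proposition \ref{p2.1}, which is stated for bounded $\cK$ and whose proof of \eqref{p2.13}$\implies$\eqref{p2.11} uses $\sup_{X\in\cK}\E_\bP[\abs{X}]<\infty$.
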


\begin{proof}
Assume that \eqref{lcsolid1} holds.
Let $A$ be a $\bP$-measurable set with $\bP(A) >0$.
By Proposition \ref{sep2}, there exists a nonzero random variable $Y\in \mathbb{L}^\infty_+(\bP)$, supported in $A$, such that
$\E_\bP[\abs{X_n-X}Y] \longrightarrow 0$ for any sequence $(X_n)$ in $\cK$ that converges to some $X\in \cS$ in probability.
There exists $r >0$ such that $B = \{Y \geq r\}$ has positive $\bP$-measure.
By choice, $B\subseteq A$.
Also,
\[ \E_\bP\big[\abs{X_n-X}\one_B\big]\leq \frac{1}{r}\E_\bP[\abs{X_n-X}Y]\longrightarrow 0
\]
if  $(X_n)$ is a sequence in $\cK$ that converges to some $X\in \cS$ in probability. Thus Condition \eqref{p2.13} of  Proposition \ref{p2.1} is satisfied, and hence by the same result, \eqref{lcsolid2} holds.

Assume that \eqref{lcsolid2} holds.
Let $\Q$ be given as in Condition \eqref{lcsolid2}, and write $Y=\frac{\mathrm{d}\Q}{\mathrm{d}\bP}$.
Let $\cU$ be an $\ZL(\bP)$-neighborhood of $0$, and let $r>0$ be such that $\cB(0,r) \subseteq \cU$.
Choose $\delta >0$ such that $\bP(Y< \delta) < \frac{r}{2}$ and let $s = \frac{r\delta}{2}$.
Let
\[ \cW = \big\{X \in \OL(\bP): \E_\Q[\abs{X}]=\E_\bP[\abs{X}Y] < s\big\}.\]
Obviously, $\cW$ is a convex solid set in $\OL(\bP)$.
If $X\in \cW$, then
\begin{align*}
d(X,0) =& \E_\bP[\abs{X}\wedge \one]\leq
\E_\bP\big[\abs{X}\one_{\{Y\geq \delta\}}\big]+\E_\bP\big[ \one_{\{Y<\delta\}}\big]\\
\leq &\frac{1}{\delta}\E_\bP\big[\abs{X}Y\one_{\{Y\geq \delta\}}\big] + \bP(Y < \delta)\\
<& \frac{s}{\delta} + \frac{r}{2}= r.
\end{align*}
This proves that $\cW \subseteq \cB(0,r) \subseteq \cU$.
Pick any $X\in\cS$.
Recall that  the $\ZL(\Q)$- and $\ZL(\bP)$-topologies agree on $\ZL(\Q) = \ZL(\bP)$.
It follows easily from the assumption \eqref{lcsolid2} that
there exists $t >0$ such that $\E_\Q[\abs{X'-X}]<s$ for all $X'\in \cB(X,t) \cap \cK$.
This means that
\[ \cB(X,t)\cap \cK \subseteq (X+\cW)\cap \cK,\]
and hence $(X+\cW)\cap \cK$ is a
neighborhood of $X$ in the relative $\ZL(\bP)$-topology on $\cK$. Thus  \eqref{lcsolid1} holds, and the proof is completed.
\end{proof}

Clearly, Theorem \ref{t4} is an immediate consequence of  Theorem \ref{lcsolid} by taking $\cS = \cK$.
Combining Theorem \ref{t4} and Corollary \ref{c2.3}, we also obtain a measure-free characterization of uniform integrability in the sense of Kardaras \cite{K}.

\begin{cor}\label{c3.4}
Let $\cK$ be a convex bounded subset of $\OL(\bP)$. The following are equivalent.
\begin{enumerate}
\item The relative $\ZL(\bP)$-topology on $\ol{\cK}$ is uniformly  locally convex-solid on $\ol{\cK}$, where the closure is taken in the $\ZL(\bP)$-topology.
\item There exists $\Q \sim \bP$ such that $\cK$ is $\Q$-uniformly integrable.
\end{enumerate}
\end{cor}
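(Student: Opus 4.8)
The plan is to deduce the corollary by applying Theorem~\ref{t4} to the $\ZL(\bP)$-closure $\ol{\cK}$ of $\cK$ and then feeding the output into Corollary~\ref{c2.3}. The only genuine preliminary step is to confirm that $\ol{\cK}$ still satisfies the hypotheses of Theorem~\ref{t4}, i.e.\ that it is a convex bounded subset of $\OL(\bP)$; everything else is a matter of stringing together equivalences already proved.

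First I would record that $\ol{\cK}$ is a convex bounded subset of $\OL(\bP)$. Convexity is automatic because the $\ZL(\bP)$-topology is a vector topology, so the closure of the convex set $\cK$ is convex. For boundedness in $\OL(\bP)$, set $c = \sup_{X\in\cK}\E_\bP[\abs{X}] < \infty$; if $X\in\ol{\cK}$ and $(X_n)$ is a sequence in $\cK$ converging to $X$ in probability, Fatou's Lemma gives $\E_\bP[\abs{X}]\leq\liminf_n\E_\bP[\abs{X_n}]\leq c$. Hence $\ol{\cK}\subseteq\OL(\bP)$ with $\sup_{X\in\ol{\cK}}\E_\bP[\abs{X}]\leq c$, so the hypotheses of Theorem~\ref{t4} apply verbatim to $\ol{\cK}$.

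With this in hand, I would apply Theorem~\ref{t4} with $\ol{\cK}$ in the role of $\cK$: the relative $\ZL(\bP)$-topology on $\ol{\cK}$ is uniformly locally convex-solid on $\ol{\cK}$ if and only if there exists $\Q\sim\bP$ such that the $\ZL(\Q)$- and $\OL(\Q)$-topologies agree on $\ol{\cK}$. By the equivalence \eqref{c2.31}$\Leftrightarrow$\eqref{c2.33} of Corollary~\ref{c2.3}, the latter condition is in turn equivalent to the existence of $\Q\sim\bP$ for which $\cK$ is $\Q$-uniformly integrable. Concatenating the two equivalences proves the corollary.

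I do not expect any real obstacle here: the substantive work is entirely contained in Theorem~\ref{t4} (which itself rests on the Hahn--Banach-type Proposition~\ref{sep2}) and in Corollary~\ref{c2.3}, so the proof reduces to checking that $\ol{\cK}$ inherits the standing hypotheses — which the Fatou argument above handles — and then citing those two results.
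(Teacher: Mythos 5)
Your proposal is correct and follows essentially the same route as the paper, which obtains the corollary precisely by combining Theorem~\ref{t4} (applied to $\ol{\cK}$) with the equivalence \eqref{c2.31}$\Leftrightarrow$\eqref{c2.33} of Corollary~\ref{c2.3}. Your preliminary Fatou argument that $\ol{\cK}$ remains convex and bounded in $\OL(\bP)$ is a sound verification of a hypothesis the paper leaves implicit.
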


\smallskip

Example A, to be presented in the next section, shows that for the relative $\ZL(\bP)$-topology on $\cK$, being uniformly locally convex-solid is  strictly stronger than being only local convex, for a general convex bounded set $\cK$ in $\OL(\bP)$. However, we now show that in the presence of positivity, the equivalence of these two conditions may be established for some sets $\cK$.

The main additional feature that positivity brings in is the following.

\begin{lem}[{\cite[Lemma 2.4]{KZ}}]\label{l5.1}
Let $(X_n)$ be a  sequence in $\mathbb{L}^0_+(\bP)$ and let $X\in \mathbb{L}^0_+(\bP)$. Suppose that every FCC of $(X_n)$ converges to $X$ in probability.  Then every FCC of $(\abs{X_n-X})$ converges to $0$ in probability.
\end{lem}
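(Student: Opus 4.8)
The plan is to reduce the statement, via an elementary lattice identity, to a single domination fact about forward convex combinations of a sequence that tends to $0$ in probability.

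First I would fix an arbitrary FCC $(Y_k)$ of $(\abs{X_n-X})$, written $Y_k=\sum_{n\geq k}\lam^k_n\abs{X_n-X}$ with $\lam^k_n\geq0$, $\sum_n\lam^k_n=1$, and only finitely many nonzero terms. Using $a^+=a+a^-$ one has the pointwise identity $\abs{X_n-X}=(X_n-X)+2(X-X_n)^+$, so
\[ Y_k=\Big(\sum_{n\geq k}\lam^k_nX_n-X\Big)+2\sum_{n\geq k}\lam^k_n(X-X_n)^+=(V_k-X)+2S_k, \]
where $V_k:=\sum_{n\geq k}\lam^k_nX_n$ is itself a FCC of $(X_n)$ and $S_k:=\sum_{n\geq k}\lam^k_n(X-X_n)^+$. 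By hypothesis $V_k\to X$ in probability, so the conclusion $Y_k\to0$ in probability will follow once I show $S_k\to0$ in probability.

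For that, I would first note the ingredients. Applying the hypothesis to the trivial FCC $(X_n)$ gives $X_n\to X$ in probability, hence $R_n:=(X-X_n)^+\to0$ in probability; and since $X_n\geq0$ we have $0\leq R_n\leq X$ pointwise. So it suffices to prove the general fact: if $(R_n)\subseteq\mathbb{L}^0_+(\bP)$ satisfies $R_n\leq X$ for a fixed $X\in\mathbb{L}^0_+(\bP)$ and $R_n\to0$ in probability, then every forward convex combination $S_k=\sum_{n\geq k}\lam^k_nR_n$ converges to $0$ in probability. To see this, fix $\ep>0$ and choose $M$ with $\bP(X>M)<\ep$ (possible since $X<\infty$ a.s.). On $\{X\leq M\}$ one has $0\leq R_n\leq M$ and $R_n\one_{\{X\leq M\}}\to0$ in probability, so by bounded convergence $\E_\bP[R_n\one_{\{X\leq M\}}]\to0$ as $n\to\infty$. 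Since the weights $\lam^k_n$ sum to $1$ and vanish for $n<k$,
\[ \E_\bP\big[S_k\one_{\{X\leq M\}}\big]=\sum_{n\geq k}\lam^k_n\E_\bP\big[R_n\one_{\{X\leq M\}}\big]\leq\sup_{n\geq k}\E_\bP\big[R_n\one_{\{X\leq M\}}\big]\longrightarrow0, \]
so $S_k\one_{\{X\leq M\}}\to0$ in probability. As $\{S_k>\delta\}\subseteq\{S_k\one_{\{X\leq M\}}>\delta\}\cup\{X>M\}$, this gives $\limsup_k\bP(S_k>\delta)\leq\ep$ for every $\delta>0$; letting $\ep\downarrow0$ yields $S_k\to0$ in probability, which completes the argument.

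The proof is short — a reduction plus a truncation — so I do not anticipate a genuine obstacle. The one point that must be handled with care is the exploitation of the \emph{forward} structure of the convex combinations: it is precisely the fact that the weights of $S_k$ are supported on $\{n\geq k\}$ that upgrades ``$\E_\bP[R_n\one_{\{X\leq M\}}]\to0$ as $n\to\infty$'' into ``$\E_\bP[S_k\one_{\{X\leq M\}}]\to0$ as $k\to\infty$''; for arbitrary (non-forward) convex combinations the conclusion would fail. Everything else is bookkeeping with $a^+=a+a^-$ and the domination $0\leq(X-X_n)^+\leq X$.
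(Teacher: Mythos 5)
Your proof is correct and follows essentially the same route as the paper's: the same identity $\abs{X_n-X}=(X_n-X)+2(X_n-X)^-$ (your $(X-X_n)^+$), the same domination $(X_n-X)^-\leq X$ coming from positivity, and the same observation that the forward support of the weights upgrades termwise convergence of expectations to convergence of the FCC. The only divergence is the device for handling the possible non-integrability of $X$: the paper switches to the equivalent finite measure $\mathrm{d}\mu=\frac{1}{1+X}\mathrm{d}\bP$ and applies dominated convergence in $\OL(\mu)$, whereas you truncate on $\{X\leq M\}$ and use bounded convergence under $\bP$ together with an $\ep$-argument --- an equally valid, slightly more elementary variant of the same step.
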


\begin{proof}
Note that $(X_n-X)^-\leq X$ for all $n\in\N$ and $(X_n-X)^-\longrightarrow 0$ in probability. Let $\mathrm{d}\mu=\frac{1}{1+X}\mathrm{d}\bP$. Then $\mu$ is a finite measure on $(\Omega,\Sigma)$, $\mu\sim \bP$, and $X\in \OL(\mu) $. By Dominated Convergence Theorem, $(X_n-X)^-\longrightarrow 0$ in the $\OL(\mu)$-norm, and consequently, any FCC of $\big((X_n-X)^-\big)_n$  converges to $0$ in the $\OL(\mu)$-norm and thus also in the measure $\mu$. Note that the $\ZL(\mu)$- and $\ZL(\bP)$-topologies coincide. Hence, any FCC of $\big((X_n-X)^-\big)_n$  converges to $0$ in the probability $\bP$. The desired result now follows immediately from the equation
$\abs{X_n-X} = (X_n-X)+ 2(X_n-X)^-$.
\end{proof}

As remarked in Section \ref{intro},  the next two results also hold if  $\cK$ is assumed to be  a convex set in $\mathbb{L}^0_+(\bP)$ that is bounded in probability.
The {\em solid hull}  $\so (\cA)$ of a set $\cA$ is defined by $\so(\cA)=\big\{Y:\abs{Y}\leq \abs{X}\text{ for some }X\in\cA\big\}$.

\begin{prop}\label{p5.3}
Let $\cK$ be a convex bounded set in $\mathbb{L}^1_+(\bP)$.
Assume that the relative $\ZL(\bP)$-topology on $\cK$ is locally convex on a countable subset $\cS$ of $\cK$.
Then the relative $\ZL(\bP)$-topology on $\cK$ is uniformly locally convex-solid on  $\cS$.
\end{prop}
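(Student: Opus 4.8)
The plan is to reduce to the case of a single point of $\cS$ and then recombine, using countability only in the recombination. For $X\in\cK$ call $B\in\Sig$ \emph{$X$-good} if $\E_\bP\big[\abs{X_n-X}\one_B\big]\to0$ for every sequence $(X_n)$ in $\cK$ converging in probability to $X$, and set $\xi_X(B)=1$ iff $B$ is $X$-good. Each $\xi_X$ is monotone and finite-union stable, so Lemma \ref{l2.0} yields a set $C_X$. The crux is the claim $(\dagger)$: for every $X\in\cS$ and every $A\in\Sig$ with $\bP(A)>0$ there is an $X$-good $B\subseteq A$ with $\bP(B)>0$. Granting $(\dagger)$, we must have $\bP(C_X)=1$ for each $X\in\cS$ (else $(\dagger)$ with $A=C_X^c$ produces an $X$-good $B\subseteq C_X^c$ with $\bP(B)=\bP(B\bs C_X)=0$), i.e.\ $\sup\{\bP(B):\xi_X(B)=1\}=1$. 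Enumerate $\cS=\{X^1,X^2,\dots\}$ and fix $A$ with $\bP(A)>0$; choosing inductively $X^i$-good sets $D_i$ with $\bP(D_i)>1-2^{-i-1}\bP(B_{i-1})$ (where $B_0=A$) and putting $B_i=B_{i-1}\cap D_i$, the set $B:=\bigcap_iB_i\subseteq A$ has $\bP(B)\geq\bP(A)\prod_i(1-2^{-i-1})>0$ and is $X^i$-good for every $i$. This is precisely Condition \eqref{p2.13} of Proposition \ref{p2.1} for the pair $(\cK,\cS)$; that proposition then gives $\Q\sim\bP$ as in Condition \eqref{p2.11}, and Theorem \ref{lcsolid} (applied to this $\cS$) yields the conclusion.

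To prove $(\dagger)$, fix $X\in\cS$. From local convexity at $X$ together with Lemma \ref{l5.1} we first extract the \emph{solidified} statement: for every $\ZL(\bP)$-neighborhood $\cU$ of $0$ there is $\delta>0$ with $\co\big\{\abs{X'-X}:X'\in\cK,\ d(X',X)<\delta\big\}\subseteq\cU$. If this failed for some $\cU$, then for each $m$ we could pick a convex combination $W_m=\sum_i\lam^m_i\abs{U^m_i-X}\notin\cU$ with $U^m_i\in\cK$ and $d(U^m_i,X)<1/m$. Concatenating the (finitely many, for each $m$) points $U^m_i$ block by block gives a sequence $(V_j)$ in $\cK$ with $V_j\to X$ in probability; by local convexity every FCC of $(V_j)$ converges to $X$, so by Lemma \ref{l5.1} every FCC of $(\abs{V_j-X})$ converges to $0$. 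But, with $p_m$ the first index of the $m$-th block, the sequence equal to $W_m$ at index $p_m$ and to $\abs{V_l-X}$ elsewhere is an FCC of $(\abs{V_j-X})$ whose $p_m$-th terms stay outside $\cU$ — a contradiction.

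Since $\ZL(\bP)$ has a base of \emph{solid} neighborhoods of $0$ (e.g.\ $\{Z:\bP(\abs Z>\eta)<\ep\}$), the previous paragraph lets us choose, for each $k$, $\delta_k>0$ so that
\[\cW_k:=\so\Big(\co\big\{\abs{X'-X}:X'\in\cK,\ d(X',X)<\delta_k\big\}\Big)\]
lies in a prescribed small neighborhood of $0$. Here $\co\{\cdots\}$ is a convex set of nonnegative functions, so its solid hull $\cW_k$ is again convex; it is also solid, symmetric, contained in $\OL(\bP)$, contains $0=\abs{X-X}$, and contains $\abs{X_n-X}$ for all large $n$ along any sequence $X_n\to X$ in $\cK$. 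These $\cW_k$ thus play exactly the role of the convex-solid certifying neighborhoods in the proof of Proposition \ref{sep2}; running that Minkowski-functional/Hahn--Banach argument exactly as in the proof of Proposition \ref{sep2}, but with these $\cW_k$, produces, for any $A$ with $\bP(A)>0$, a nonzero $Y\in\mathbb{L}^\infty_+(\bP)$ supported in $A$ with $\E_\bP[\abs{X_n-X}Y]\to0$ for all sequences $X_n\to X$ in $\cK$ (using solidity of the $\cW_k$). Taking $r>0$ with $\bP(Y\geq r)>0$ and $B=\{Y\geq r\}$ then gives a positive-measure $X$-good subset of $A$, which is $(\dagger)$.

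The principal obstacle is precisely the manufacture of the \emph{solid} sets $\cW_k$ out of mere (non-solid) local convexity: in the non-locally-convex space $\ZL(\bP)$ the solid hull of a small convex set is in general not small — the very phenomenon behind Example A. Positivity saves the day: Lemma \ref{l5.1} lets us work with the nonnegative sequences $(\abs{X_n-X})$, the solid hull of a convex set of nonnegative functions stays convex, and it stays small because the $\ZL(\bP)$-neighborhoods $\{Z:\bP(\abs Z>\eta)<\ep\}$ are solid. Countability of $\cS$ is used only in the recombination, where the convergence of $\sum_i2^{-i-1}$ keeps $\bigcap_iB_i$ of positive measure.
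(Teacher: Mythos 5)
Your proof is correct and takes essentially the same route as the paper's: your single-point step builds the same convex-solid sets (your $\so\co\big\{\abs{X'-X}: X'\in\cK,\ d(X',X)<\delta\big\}$ coincides with the paper's $\co\so\big(\cB_n\cap(\cK-X)\big)$), proves their smallness by the same block/FCC contradiction via Lemma~\ref{l5.1}, and then runs the Hahn--Banach argument of Proposition~\ref{sep2}. The only (cosmetic) difference is the countable recombination, where you go through condition \eqref{p2.13} of Proposition~\ref{p2.1} with an exhaustion/intersection argument, whereas the paper intersects sets of measure $>1-\ep/2^k$ via condition \eqref{p2.12}; both conclude with Theorem~\ref{lcsolid}.
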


\begin{proof}
We first establish the special case where $\cS$ is a singleton set, say, $\cS=\{X\}$.
Again, we use the metric  given by $d(X',X'') = \E_\bP[\abs{X'-X''}\wedge \one]$ to generate the topology on $\ZL(\bP)$.  Let $\cU$ be a neighborhood of $0$ in $\ZL(\bP)$.
For each $n\in \N$, let  $\cB_n$ be the ball of radius $\frac{1}{n}$ centered at $0$ with respect to the metric $d$.
Set $$\cW_n = \co \so \big(\cB_n\cap (\cK-X)\big).$$
Then $\cW_n$  is a convex solid set (again, one may apply the Riesz Decomposition Theorem to verify solidity), and $(X+\cW_n)\cap \cK$ contains $(X+\cB_n) \cap \cK$.  Hence, $(X+\cW_n)\cap \cK$ is a neighborhood of $X$ in the relative $\ZL(\bP)$-topology on $\cK$.

It remains to show that $\cW_n\subseteq \cU$ for some $n\in \N$.
Assume the contrary.  Then we can find \emph{consecutive} finite subsets $I_n$ of $\N$ and  random variables
$\sum_{k\in I_n}a_kY_k \notin \cU$, where, for any $n\geq 1$,
$(Y_k)_{k\in I_n}\subseteq \so\big(\cB_n\cap (\cK-X)\big)$, $a_k \geq 0$ for $k\in I_n$, and $\sum_{k\in I_n}a_k = 1$.
Take random variables $(X_k)_{k\in I_n} \subseteq \cB_n \cap (\cK-X)$ such that $\abs{Y_k} \leq \abs{X_k}$ for each $k$.
Clearly, $X_k\longrightarrow 0$ in $\ZL(\bP)$, and hence $\cK\ni X+X_k\longrightarrow X$ in $\ZL(\bP)$.
By the local convexity of the relative $\ZL(\bP)$-topology on $\cK$ at $X$, every FCC of $(X+X_k)$ converges to $X$ in probability.
It then follows from  Lemma \ref{l5.1} that every FCC of $(\abs{X_k})$ converges to $0$ in probability.
In particular, $(\sum_{k\in I_n}a_k\abs{X_k})_n$ converges to $0$ in probability, and therefore so does $\big(\sum_{k\in I_n}a_kY_k\big)$, since $\bigabs{\sum_{k\in I_n}a_kY_k} \leq \sum_{k\in I_n}a_k\abs{X_k}$.
This contradicts  that $\sum_{k\in I_n}a_kY_k\notin \cU$ for all $n\in\N$ and thus proves the special case.

Now we consider the general case. Enumerate the set $\cS$ as a sequence $(Y_k)$.  For each $k$, by the special case and Theorem \ref{lcsolid}, there exists $\Q_k\sim \bP$ such that
if $(X_n)$ is a sequence in $\cK$ that converges in probability to  $Y_k$, then $(X_n)$ converges to $Y_k$ in $\OL(\Q_k)$.
Let $\ep >0$ be given.
Using the equivalence of \eqref{p2.11} and \eqref{p2.12} in Proposition \ref{p2.1}, for each $k$, there exists a measurable set $A_k$ with $\bP(A_k) > 1-\frac{\ep}{2^k}$
such that if $(X_n)$ is a sequence in $\cK$ that converges in probability to $Y_k$, then $\E\big[\abs{X_n-X}\one_{A_k}\big]\stackrel{n}{\longrightarrow} 0$.
Set $A = \cap_{k=1}^\infty A_k$.  Then $A$ is a measurable set with $\bP(A) > 1-\ep$.
By choice, if  $(X_n)$ is a sequence in $\cK$ that converges in probability to some $Y\in \cS$, then $\E_\bP\big[\abs{X_n-X}\one_{A}\big]\longrightarrow 0$.
By Proposition \ref{p2.1} and Theorem \ref{lcsolid} again, the relative $\ZL(\bP)$-topology on $\cK$ is uniformly locally convex-solid on  $\cS$.
\end{proof}

Let $\cK$ be a convex set in $\ZL(\bP)$. Say that a subset $\cS$ of $\cK$ is {\em relatively internal} in $\cK$ if for any $X\in \cK$, there exist $Y\in \cS$ and $t >0$ such that $Y + t(Y-X) \in \cK$, or equivalently, if for any $X\in \cK$, there exist $Z\in\cK$ and $0<\al<1$ such that $\al X+(1-\al) Z\in\cS$.
If $\cS$ is a singleton set, say, $\cS = \{Y\}$, then $\cS$ is relatively internal in $\cK$ if and only if
$0$
 is an internal point of the set $\spn (\cK-Y)$ in the vector space $\spn\{\cK -Y\}$ in the usual sense \cite[Definition V.1.6]{DS}.
The next result gives a sufficient condition on the set $\cK$ in order that (Q1+) has an affirmative answer.

\begin{thm}\label{t5.4}
Let $\cK$ be a convex bounded set in $\mathbb{L}_+^1(\bP)$ that contains a countable relatively internal subset $\cS$.  The following are equivalent.
\begin{enumerate}
\item\label{t5.41} The $\ZL(\bP)$-topology is locally convex on $\cK$.
\item\label{t5.42} There exists $\Q\sim \bP$ such that the $\ZL(\Q)$- and $\OL(\Q)$-topologies agree on $\cK$.
\end{enumerate}
\end{thm}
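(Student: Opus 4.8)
The implication \eqref{t5.42}$\implies$\eqref{t5.41} needs no hypothesis on $\cS$ and is immediate: if $\Q\sim\bP$ makes the $\ZL(\Q)$- and $\OL(\Q)$-topologies agree on $\cK$, then the relative $\ZL(\bP)$-topology on $\cK$ is the restriction of the norm topology of $\OL(\Q)$, which is locally convex, and since $\ZL(\Q)=\ZL(\bP)$ with the same topology, \eqref{t5.41} follows. So the plan concentrates on \eqref{t5.41}$\implies$\eqref{t5.42}.

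Assume \eqref{t5.41}. The strategy is to first obtain the conclusion ``along $\cS$'' using Proposition \ref{p5.3} and Theorem \ref{lcsolid}, and then to spread it to all of $\cK$ via relative internality. Since the $\ZL(\bP)$-topology is locally convex on $\cK$, it is in particular locally convex on the countable set $\cS$; hence Proposition \ref{p5.3} applies and shows that the relative $\ZL(\bP)$-topology on $\cK$ is uniformly locally convex-solid on $\cS$. Theorem \ref{lcsolid} then yields $\Q\sim\bP$ such that every sequence in $\cK$ converging in probability to a point of $\cS$ converges to that point in $\OL(\Q)$. This $\Q$ will be the witness for \eqref{t5.42}; it remains to upgrade this convergence statement from limits in $\cS$ to arbitrary limits in $\cK$.

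This upgrade is the key step, and it is where relative internality enters. The relative topologies on $\cK$ coming from $\ZL(\Q)=\ZL(\bP)$ and from $\OL(\Q)$ are both metrizable, and $\OL(\Q)$-convergence always implies convergence in probability, so it suffices to prove: if $(X_n)\subseteq\cK$ converges in probability to some $X\in\cK$, then $X_n\to X$ in $\OL(\Q)$. Fix such an $X$. By relative internality of $\cS$ in $\cK$, there exist $Z\in\cK$ and $\alpha\in(0,1)$ with $Y:=\alpha X+(1-\alpha)Z\in\cS$. Set $W_n=\alpha X_n+(1-\alpha)Z$; by convexity $W_n\in\cK$, and $W_n\to Y$ in probability with $Y\in\cS$, so by the defining property of $\Q$ we get $W_n\to Y$ in $\OL(\Q)$. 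Since $W_n-Y=\alpha(X_n-X)$, it follows that $\E_\Q[\abs{X_n-X}]=\alpha^{-1}\E_\Q[\abs{W_n-Y}]\to 0$, i.e. $X_n\to X$ in $\OL(\Q)$. Hence the two relative topologies on $\cK$ have the same convergent sequences and therefore coincide, which is \eqref{t5.42}.

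I do not anticipate a genuine obstacle beyond bookkeeping: the heart of the matter is the elementary observation that good behaviour at the points of a relatively internal set is inherited by every point of $\cK$ after mixing with one fixed element, the coefficient $\alpha$ being fixed (hence bounded away from $0$) once $X$ is fixed. The only points to check carefully are the reduction ``topologies agree $\Leftrightarrow$ same convergent sequences'' — legitimate since both relative topologies are metrizable, even though $\cK$ need not be closed — and the applicability of Proposition \ref{p5.3} and Theorem \ref{lcsolid}. As remarked before the statement, the same argument also gives Theorem \ref{t5.4} when $\cK$ is merely convex and bounded in probability in $\mathbb{L}^0_+(\bP)$: by Brannath and Schachermayer \cite{BS} one may first pass to an equivalent measure rendering $\cK$ bounded in $\OL$, without changing the $\ZL$-topology.
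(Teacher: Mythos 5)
Your proposal is correct and follows essentially the same route as the paper: apply Proposition \ref{p5.3} and Theorem \ref{lcsolid} to get $\Q$ working for limits in the countable set $\cS$, then transfer to an arbitrary limit $X\in\cK$ by mixing with a fixed $Z\in\cK$ so that $\al X+(1-\al)Z\in\cS$ and dividing by $\al>0$. Your extra remarks (metrizability justifying the sequence-based reduction, and the Brannath--Schachermayer reduction from boundedness in probability) are consistent with the paper.
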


\begin{proof}
The implication \eqref{t5.42}$\implies$\eqref{t5.41} is trivial.  We show that \eqref{t5.41}$\implies$\eqref{t5.42}.   By Proposition \ref{p5.3} and  Theorem \ref{lcsolid},  choose $\Q\sim \bP$ such that if $(X_n)\subset \cK$ converges in probability to some $X\in \cS$, then $\E_\Q[\abs{X_n-X}]\longrightarrow 0$. Now let $(X_n)$ be a sequence in $\cK$ that converges in probability to some $X\in \cK$. We aim to show that $\E_\Q[\abs{X_n-X}]\longrightarrow 0$, which will complete the proof.
Choose $Z\in \cK$ and $\al \in(0,1)$ such that $\al X+(1-\al )Z\in\cS$.
Then $\big(\al X_n+ (1-\al) Z\big)$ is a sequence in $\cK$ that converges in probability to $\al X+ (1-\al) Z\in \cS$.
Thus
\[ \al \E_\Q[\abs{X_n-X}]= \E_\Q\Big[ \Bigabs{\big(\al X_n+ (1-\al) Z\big) -\big(\al X+ (1-\al) Z\big)}\Big] \longrightarrow 0.\]
Since $\alpha>0$, it follows that $\E_\Q[\abs{X_n-X}]\longrightarrow 0$, as desired.
\end{proof}

We are ready to prove Theorem \ref{t5}, which complements Theorem \ref{t1}.

 \begin{proof}[Proof of Theorem \ref{t5}]
In light of Theorem \ref{t5.4}, it suffices to show that $\cK = \co(X_n)_{n=1}^\infty$, where $(X_n)$ is a bounded positive sequence in $\OL(\bP)$, contains a countable subset $\cS$ that is relatively internal in $\cK$.
We claim that such a set is
\[ \cS = \Big\{\sum^m_{n=1}b_nX_n: m \in \N,\; \text{each } b_n\geq 0 \text{ is rational},\;\sum^m_{n=1}b_n = 1\Big\}.\]
Obviously, $\cS$ is a countable subset of $\cK$.
Suppose that $X = \sum^m_{n=1}a_nX_n \in \cK$, where  $a_n \geq 0$ for each $1\leq n\leq m$ and $\sum^m_{n=1}a_n = 1$.
Choose rational numbers $b_n \geq \frac{a_n}{3}$ for each $1\leq n\leq m$ such that $b: = \sum^m_{n=1}b_n \leq 1$.
Note that $b$ is a rational number.
Hence
\[Y= \sum^m_{n=1}b_nX_n + (1-b)X_{m+1} \in \cS.\]
By direct computation,
\[ Y+ \frac{1}{2}(Y-X) = \sum^m_{n=1}\big(\frac{3b_n}{2} - \frac{a_n}{2}\big)X_n + \frac{3}{2}(1-b)X_{m+1}.\]
By choice,  $\frac{3b_n}{2} - \frac{a_n}{2}\geq 0$ for $1\leq n\leq m$ and $\frac{3}{2}(1-b)\geq 0$.  Furthermore,
\[ \sum^m_{n=1}\big(\frac{3b_n}{2} - \frac{a_n}{2}\big) +  \frac{3}{2}(1-b) = \frac{3b}{2} - \frac{1}{2} +  \frac{3}{2}(1-b) = 1.\]
Thus $Y+ \frac{1}{2}(Y-X) \in \cK$.
This proves that $\cS$ is relatively internal in $\cK$.
\end{proof}

\section{Construction of Example A}

In this section, we give an example which shows that for a general convex bounded set $\cK$ in $\OL(\bP)$,  the $\ZL(\bP)$-topology on $\cK$ being uniformly locally convex-solid is strictly stronger than being locally convex. In fact, the set $\cK$ we construct is even $\ZL[0,1]$-compact and circled, i.e., $\cK=-\cK$. (Note that Theorem \ref{t2} holds for general solid sets, not necessarily positive, and that circledness is a reasonable weakening of solidity).

The example is a modification of an example of Pryce \cite{P}.
Denote the Lebesgue measure on $[0,1]$ by $\mathrm{m}$.
Let $(X_n)$ be a sequence of independent random variables in $\ZL[0,1]$, each of which obeys the Cauchy distribution with pdf $\frac{1}{\pi(1+t^2)}$, $t\in\R$.
Fix $1 < p <2$. For any $n\in\N$, let $$k_n = n\big(\log (n+2)\big)^p,$$
$$\beta_n = \log(1+k_n^2).$$
Define the  function  $F_n:\R \to \R $ by $$F_n(t) = \frac{t}{\beta_n}\one_{[-k_n,k_n]}(t),$$
and put
\begin{equation}\label{e4.00}Y_n = F_n(X_n).
\end{equation}
It is easily checked that
\[ \E_\m\big[\abs{Y_n}\big] = \int_\R \frac{|F_n(t)|}{\pi(1+t^2)}\mathrm{d}t = \frac{1}{\pi}\]
for all $n$.
Now, set
\begin{equation}\label{e4.0}
\cK = \Big\{\sum_{n=1}^\infty a_nY_n: \sum_{n=1}^\infty\abs{a_n} \leq 1\Big\}.
\end{equation}
It is clear that $\cK$ is a convex, circled, and bounded set in $\OL[0,1]$.

We now proceed to verify that $\cK$ satisfies the properties in Theorem \ref{cea}.

\begin{lem}\label{step 1}
Let $(a_i)_{i=1}^\infty$ be a sequence of real numbers and let $b_i = \frac{a_i}{\beta_i}$ for all $i\in\N$.
Fix $\ep >0$.  For any disjoint finite sets $I$ and $J$ in $\N$, let
\[P(I,J) =  \m\Big(\Bigabs{\sum_{i\in I}a_iY_i + \sum_{j\in J}b_jX_j} > \ep\Big).\]
If $I$ and $J$ are disjoint finite subsets of $\N$   and  $i_0\notin I\cup J$, then
\[ P\big(I\cup\{i_0\},J\big) \leq \frac{2}{\pi k_{i_0}}P(I,J) + P\big(I, J\cup\{i_0\}\big).\]
\end{lem}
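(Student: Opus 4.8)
The plan is to decompose the event defining $P(I\cup\{i_0\},J)$ according to whether $X_{i_0}$ is large (i.e. $|X_{i_0}| > k_{i_0}$) or small (i.e. $|X_{i_0}| \leq k_{i_0}$), since these two cases make $Y_{i_0} = F_{i_0}(X_{i_0})$ behave very differently. Recall that $Y_{i_0} = \frac{X_{i_0}}{\beta_{i_0}}\one_{[-k_{i_0},k_{i_0}]}(X_{i_0})$ and $b_{i_0}X_{i_0} = \frac{a_{i_0}}{\beta_{i_0}}X_{i_0}$. So on the set $\{|X_{i_0}| \leq k_{i_0}\}$ we have exactly $a_{i_0}Y_{i_0} = b_{i_0}X_{i_0}$, while on the set $\{|X_{i_0}| > k_{i_0}\}$ we have $Y_{i_0} = 0$. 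This is the crucial algebraic observation, and it is why the quantity $P(I,J)$ is defined with a mixture of $Y$'s and (rescaled) $X$'s in the first place.

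First I would write
\[
P\big(I\cup\{i_0\},J\big) = \m\Big(\Bigabs{\sum_{i\in I}a_iY_i + a_{i_0}Y_{i_0} + \sum_{j\in J}b_jX_j} > \ep\Big)
\]
and split the event into its intersection with $\{|X_{i_0}| > k_{i_0}\}$ and with $\{|X_{i_0}| \leq k_{i_0}\}$. On the second piece, $a_{i_0}Y_{i_0} = b_{i_0}X_{i_0}$, so that piece is contained in the event $\bigabs{\sum_{i\in I}a_iY_i + \sum_{j\in J\cup\{i_0\}}b_jX_j} > \ep$, whose probability is $P(I,J\cup\{i_0\})$. On the first piece, $Y_{i_0}=0$, so the integrand reduces to $\sum_{i\in I}a_iY_i + \sum_{j\in J}b_jX_j$, and that piece is contained in $\{|X_{i_0}| > k_{i_0}\} \cap \{\bigabs{\sum_{i\in I}a_iY_i + \sum_{j\in J}b_jX_j} > \ep\}$. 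Here I would invoke independence: $X_{i_0}$ is independent of all the $X_i$ ($i\in I$) and $X_j$ ($j\in J$) appearing in the remaining sum (since $i_0 \notin I\cup J$ and the $X_n$ are independent), hence the probability of this intersection factors as $\m(|X_{i_0}| > k_{i_0}) \cdot P(I,J)$.

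It remains to bound $\m(|X_{i_0}| > k_{i_0})$. Since $X_{i_0}$ has the standard Cauchy density $\frac{1}{\pi(1+t^2)}$, a direct tail estimate gives
\[
\m(|X_{i_0}| > k_{i_0}) = \frac{2}{\pi}\int_{k_{i_0}}^\infty \frac{dt}{1+t^2} \leq \frac{2}{\pi}\int_{k_{i_0}}^\infty \frac{dt}{t^2} = \frac{2}{\pi k_{i_0}}.
\]
Combining the two pieces yields $P(I\cup\{i_0\},J) \leq \frac{2}{\pi k_{i_0}}P(I,J) + P(I,J\cup\{i_0\})$, as claimed. I do not expect any genuine obstacle here; the only point requiring care is making the independence step precise — one should note that the $\sigma$-algebra generated by $X_{i_0}$ is independent of the $\sigma$-algebra generated by $\{X_n : n \in I\cup J\}$, so that the joint law splits as a product, legitimizing the factorization of the probability of the intersection. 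Everything else is the elementary case split and the Cauchy tail bound.
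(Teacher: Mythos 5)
Your proposal is correct and follows essentially the same route as the paper's proof: the same case split on $\{\abs{X_{i_0}}>k_{i_0}\}$ versus $\{\abs{X_{i_0}}\leq k_{i_0}\}$, the same use of $Y_{i_0}=0$ (with independence to factor out $\m(\abs{X_{i_0}}>k_{i_0})$) on the first set and $a_{i_0}Y_{i_0}=b_{i_0}X_{i_0}$ on the second, and the same Cauchy tail bound $\m(\abs{X_{i_0}}>k_{i_0})\leq \frac{2}{\pi k_{i_0}}$. No gaps.
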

The empty sum is conventionally regarded as $0$. In particular, $P(\emptyset,\emptyset)=\m(\emptyset)=0$.

\begin{proof}
We have
\begin{align}\label{eq3.1}
P\big(I\cup\{i_0\},J\big) =&
 \m\Big(\Big\{\Bigabs{\sum_{i\in I\cup\{i_0\}}a_iY_i + \sum_{j\in J}b_jX_j} > \ep\Big\}\cap \big\{\abs{X_{i_0}} > k_{i_0}\big\}\Big)\\ \notag
 & + \m\Big(\Big\{\Bigabs{\sum_{i\in I\cup\{i_0\}}a_iY_i + \sum_{j\in J}b_jX_j} > \ep\Big\}\cap \big\{\abs{X_{i_0}} \leq k_{i_0}\big\}\Big).
 \end{align}
Since $Y_{i_0} = 0$ on the set $\{\abs{X_{i_0}} > k_{i_0}\}$, the first term on the right is
\[
\m\Big(\Big\{\Bigabs{\sum_{i\in I}a_iY_i + \sum_{j\in J}b_jX_j} > \ep\Big\}\cap \big\{\abs{X_{i_0}} > k_{i_0}\big\}\Big)
= P(I,J) \cdot \m\big(|X_{i_0}| > k_{i_0}\big)\]
by independence.
Also,
\[ \m\big(\abs{X_{i_0}} > k_{i_0}\big) = \frac{2}{\pi}\int^\infty_{k_{i_0}}\frac{1}{1+t^2}\mathrm{d}t \leq \frac{2}{\pi k_{i_0}}.\]
Hence, the first term on the right of \eqref{eq3.1} is
$\leq \frac{2}{\pi k_{i_0}}P(I,J)$.
On the set $\{\abs{X_{i_0}}\leq  k_{i_0}\}$, $a_{i_0}Y_{i_0} = b_{i_0}X_{i_0}$.
Thus, the second term on the right in \eqref{eq3.1} is
\[\m\Big(\Big\{\Bigabs{\sum_{i\in I}a_iY_i + \sum_{j\in J\cup\{i_0\}}b_jX_j} > \ep\Big\}\cap \big\{\abs{X_{i_0}} \leq k_{i_0}\big\}\Big)\leq
P\big(I, J\cup\{i_0\}\big).\]
Combining the estimates above proves the lemma.
\end{proof}

It is well-known that if $J$ is a finite subset of $\N$ and $b_i, i\in J$, are real numbers, then
$\frac{1}{b}\sum_{j\in J}b_jX_j$
is Cauchy distributed, where $b = \sum_{j\in J}|b_j|$.  Hence, for $\ep > 0$,
\begin{equation}\label{eq3.2} \m\Big(\Bigabs{\sum_{j\in J}b_jX_j} > \ep\Big) = \frac{2}{\pi}\int^\infty_{\frac{\ep}{b}}\frac{1}{1+t^2}\mathrm{d}t
\leq \frac{2b}{\pi\ep}.
\end{equation}

\begin{lem}\label{est}
In the notation of Lemma \ref{step 1}, if $I$ and $J$ are disjoint finite subsets of $\N$, then
\[ P(I,J) \leq \frac{2}{\pi\ep}\prod_{i\in I}\Big(1 + \frac{2}{\pi k_i}\Big)\sum_{j\in J}\abs{b_j} + \frac{2}{\pi\ep}\sum_{i\in I}\Big[\abs{b_i}\prod_{i' \in I \bs\{i\}}\Big(1 + \frac{2}{\pi k_{i'}}\Big)\Big].\]
\end{lem}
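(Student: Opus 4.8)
The plan is to prove the estimate by induction on the cardinality $\abs{I}$, with $J$ allowed to range over all finite subsets of $\N$ disjoint from $I$, using Lemma \ref{step 1} as the single inductive engine and \eqref{eq3.2} as the base case.

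\textbf{Base case $I=\emptyset$.} Here $P(\emptyset,J) = \m\big(\bigabs{\sum_{j\in J}b_jX_j}>\ep\big)$, and \eqref{eq3.2} gives $P(\emptyset,J)\leq \frac{2}{\pi\ep}\sum_{j\in J}\abs{b_j}$. On the right-hand side of the claimed inequality, the product $\prod_{i\in I}(1+\frac{2}{\pi k_i})$ over the empty index set equals $1$ and the sum $\sum_{i\in I}[\cdots]$ over the empty index set equals $0$, so the right-hand side is exactly $\frac{2}{\pi\ep}\sum_{j\in J}\abs{b_j}$. Thus the base case holds.

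\textbf{Inductive step.} Assume the inequality holds for every index set of size $n$ (and every disjoint finite $J$). Let $I'$ have $\abs{I'}=n+1$, fix $i_0\in I'$, and put $I=I'\bs\{i_0\}$; note $i_0\notin I\cup J$, so both $J$ and $J\cup\{i_0\}$ are disjoint from $I$. Lemma \ref{step 1} yields
\[ P(I',J) \leq \frac{2}{\pi k_{i_0}}P(I,J) + P\big(I,J\cup\{i_0\}\big).\]
Now apply the inductive hypothesis to $P(I,J)$ and to $P(I,J\cup\{i_0\})$ and collect terms. The coefficient of $\frac{2}{\pi\ep}\sum_{j\in J}\abs{b_j}$ becomes $\big(\frac{2}{\pi k_{i_0}}+1\big)\prod_{i\in I}(1+\frac{2}{\pi k_i}) = \prod_{i\in I'}(1+\frac{2}{\pi k_i})$; the extra $\frac{2}{\pi\ep}\abs{b_{i_0}}\prod_{i\in I}(1+\frac{2}{\pi k_i})$ coming from the $j=i_0$ summand in $P(I,J\cup\{i_0\})$ is precisely the $i=i_0$ term of $\frac{2}{\pi\ep}\sum_{i\in I'}\big[\abs{b_i}\prod_{i'\in I'\bs\{i\}}(1+\frac{2}{\pi k_{i'}})\big]$; and for each $i\in I$ the two contributions $\frac{2}{\pi k_{i_0}}\abs{b_i}\prod_{i'\in I\bs\{i\}}(1+\frac{2}{\pi k_{i'}})$ and $\abs{b_i}\prod_{i'\in I\bs\{i\}}(1+\frac{2}{\pi k_{i'}})$ combine to $\abs{b_i}\prod_{i'\in I'\bs\{i\}}(1+\frac{2}{\pi k_{i'}})$, since $I'\bs\{i\} = (I\bs\{i\})\cup\{i_0\}$. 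Summing these gives exactly the claimed bound for $P(I',J)$, completing the induction.

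\textbf{Main obstacle.} There is no genuine conceptual difficulty; the only point requiring care is the bookkeeping in the inductive step—correctly matching the $\frac{2}{\pi k_{i_0}}$-rescaled terms and the $i_0$-augmented terms against the product/sum structure on the right-hand side—so the write-up should display the term-by-term collection explicitly rather than leaving it to the reader.
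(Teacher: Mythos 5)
Your proof is correct and follows essentially the same route as the paper: induction on the cardinality of $I$, with the base case $I=\emptyset$ given by \eqref{eq3.2} and the inductive step obtained by applying Lemma \ref{step 1} together with the inductive hypothesis and regrouping terms using $\bigl(1+\frac{2}{\pi k_{i_0}}\bigr)\prod_{i\in I}\bigl(1+\frac{2}{\pi k_i}\bigr)=\prod_{i\in I\cup\{i_0\}}\bigl(1+\frac{2}{\pi k_i}\bigr)$. The term-by-term bookkeeping you describe matches the paper's computation exactly.
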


The product over an empty index set is conventionally regarded as $1$.

\begin{proof}
The proof is by induction on the cardinality of $I$.
If $I = \emptyset$, then the result holds by (\ref{eq3.2}).
Suppose that the result holds for a set $I$ and let $i_0\notin I \cup J$.
For convenience, let us write
$A_M = \prod_{i\in M}(1 + \frac{2}{\pi k_i})$ for any finite subset $M$ of $\N$.
By Lemma \ref{step 1} and the inductive hypothesis, we have
\begin{align*}
&P\big(I\cup\{i_0\},J\big) \\
\leq &\frac{2}{\pi k_{i_0}}P(I,J) + P(I, J\cup\{i_0\})\\
\leq &\frac{2}{\pi k_{i_0}}\frac{2}{\pi\ep}\Big[A_I\sum_{j\in J}\abs{b_j} + \sum_{i\in I}A_{I \bs\{i\}}\abs{b_i}\Big]  + \frac{2}{\pi\ep}\Big[A_I\sum_{j\in J\cup\{i_0\}}\abs{b_j} + \sum_{i\in I}A_{I \bs\{i\}}\abs{b_i}\Big]\\
=&\frac{2}{\pi\ep}\Big(1 + \frac{2}{\pi k_{i_0}}\Big)A_I\sum_{j\in J}\abs{b_j} + \frac{2}{\pi\ep}A_I\abs{b_{i_0}} + \frac{2}{\pi\ep}\sum_{i\in I}\Big(1 + \frac{2}{\pi k_{i_0}}\Big)A_{I\bs\{i\}}\abs{b_i}\\
=& \frac{2}{\pi\ep}A_{I\cup\{i_0\}}\sum_{j\in J}\abs{b_j} + \frac{2}{\pi\ep}\sum_{i\in I\cup\{i_0\}}A_{(I\cup\{i_0\})\bs\{i\}}\abs{b_i}.
\end{align*}
This completes the induction.
\end{proof}

Taking $J = \emptyset$ in Lemma \ref{est} gives

\begin{lem}\label{maincor}
If $I$ is a finite set in $\N$ and $a_i, i\in I$, are real numbers, then, for any $\ep >0$,
\[ \m\Big(\Bigabs{\sum_{i\in I}a_iY_i} > \ep\Big) \leq \frac{2}{\pi\ep}\sum_{i\in I}\frac{\abs{a_i}}{\beta_i}\prod_{i'\in I\bs \{i\}}\Big(1+ \frac{2}{\pi k_{i'}}\Big).
\]
\end{lem}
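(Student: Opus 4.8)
The plan is to obtain the inequality simply as the special case $J=\emptyset$ of Lemma~\ref{est}, so the whole matter reduces to the bookkeeping of empty sums and products. First recall that, in the notation fixed in Lemma~\ref{step 1},
\[ P(I,\emptyset) = \m\Big(\Bigabs{\sum_{i\in I}a_iY_i + \sum_{j\in\emptyset}b_jX_j} > \ep\Big) = \m\Big(\Bigabs{\sum_{i\in I}a_iY_i} > \ep\Big),\]
since the sum over the empty index set $J$ is $0$ by convention. Next observe that, on the right-hand side of the estimate in Lemma~\ref{est}, the first summand $\frac{2}{\pi\ep}\prod_{i\in I}(1+\frac{2}{\pi k_i})\sum_{j\in J}\abs{b_j}$ vanishes when $J=\emptyset$, again because the empty sum is $0$. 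Hence Lemma~\ref{est} with $J=\emptyset$ gives
\[ \m\Big(\Bigabs{\sum_{i\in I}a_iY_i} > \ep\Big) \;\leq\; \frac{2}{\pi\ep}\sum_{i\in I}\Big[\abs{b_i}\prod_{i'\in I\bs\{i\}}\Big(1+\frac{2}{\pi k_{i'}}\Big)\Big],\]
and substituting $b_i=\frac{a_i}{\beta_i}$ yields exactly the displayed bound. There is no real obstacle here; the only point worth a moment's care is applying the two empty-sum conventions (for $P(I,\emptyset)$ and for the $J$-indexed term in Lemma~\ref{est}) consistently, which is already signposted in the remarks following Lemmas~\ref{step 1} and~\ref{est}.

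For completeness I note that one could instead prove the inequality from scratch by induction on $\abs{I}$: the base case $I=\emptyset$ is trivial since $\m(\abs{0}>\ep)=0$ for $\ep>0$ and the right-hand side is also $0$, and the inductive step would use Lemma~\ref{step 1} with $J=\emptyset$. However, the term $P(I,\{i_0\})$ appearing there forces the mixed quantity $P(I,J)$ back into the argument, so this route amounts to re-deriving Lemma~\ref{est} itself; deducing the lemma directly from Lemma~\ref{est} is therefore both shorter and the natural choice.
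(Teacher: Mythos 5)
Your proposal is correct and is exactly the paper's argument: the paper proves the lemma by setting $J=\emptyset$ in Lemma~\ref{est}, using the stated empty-sum convention and the substitution $b_i=\frac{a_i}{\beta_i}$ from Lemma~\ref{step 1}. Nothing further is needed.
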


\begin{prop}\label{fcc Y}
Any FCC of $(Y_1,-Y_1,Y_2,-Y_2,\dots)$ converges to $0$ in probability.
\end{prop}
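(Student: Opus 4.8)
The plan is to show that any forward convex combination (FCC) of the sequence $(Y_1,-Y_1,Y_2,-Y_2,\dots)$ converges to $0$ in probability by exploiting the quantitative tail estimate provided by Lemma~\ref{maincor} together with the growth rates built into the constants $k_n$ and $\beta_n$. A generic FCC is a sequence $(Z_\ell)$ where, for each $\ell$, $Z_\ell$ is a convex combination of terms $\pm Y_n$ with indices $n$ growing to infinity; absorbing signs into the coefficients, we may write $Z_\ell = \sum_{i\in I_\ell} a_i^{(\ell)} Y_i$ with $I_\ell$ a finite subset of $\N$, $\sum_{i\in I_\ell}\abs{a_i^{(\ell)}}\le 1$, and $\min I_\ell \to \infty$ as $\ell\to\infty$. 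Fix $\ep>0$; I must show $\m(\abs{Z_\ell}>\ep)\to 0$.

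The key estimate is Lemma~\ref{maincor}, which gives
\[
\m\Big(\Bigabs{\sum_{i\in I_\ell}a_i^{(\ell)}Y_i}>\ep\Big)\le \frac{2}{\pi\ep}\sum_{i\in I_\ell}\frac{\abs{a_i^{(\ell)}}}{\beta_i}\prod_{i'\in I_\ell\bs\{i\}}\Big(1+\frac{2}{\pi k_{i'}}\Big).
\]
Here two facts must be combined. First, since $k_n = n(\log(n+2))^p$, the infinite product $\prod_{n=1}^\infty(1+\frac{2}{\pi k_n})$ converges (the terms are summable because $\sum 1/k_n<\infty$ — indeed $k_n\ge n$), so there is a uniform bound $\prod_{i'\in I_\ell\bs\{i\}}(1+\frac{2}{\pi k_{i'}})\le M:=\prod_{n=1}^\infty(1+\frac{2}{\pi k_n})<\infty$, independent of $\ell$ and $i$. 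Second, $\beta_n = \log(1+k_n^2)\to\infty$, so $\frac{1}{\beta_i}\to 0$; given $\min I_\ell\to\infty$, for every $\eta>0$ there is $\ell_0$ such that $\frac{1}{\beta_i}<\eta$ for all $i\in I_\ell$ once $\ell\ge\ell_0$. Then the sum $\sum_{i\in I_\ell}\frac{\abs{a_i^{(\ell)}}}{\beta_i}\le \eta\sum_{i\in I_\ell}\abs{a_i^{(\ell)}}\le\eta$, whence $\m(\abs{Z_\ell}>\ep)\le \frac{2M\eta}{\pi\ep}$ for all $\ell\ge\ell_0$. Since $\eta>0$ was arbitrary, $\m(\abs{Z_\ell}>\ep)\to 0$, as required.

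The one technical point to handle carefully is the reduction from an arbitrary FCC of $(Y_1,-Y_1,Y_2,-Y_2,\dots)$ to the form $\sum_{i\in I_\ell}a_i^{(\ell)}Y_i$ with a \emph{single} coefficient per index $i$: a convex combination might use both $Y_i$ and $-Y_i$ for the same $i$, but these simply collapse by linearity into one term $c\,Y_i$ with $\abs{c}$ no larger than the sum of the two original coefficients, so the constraint $\sum\abs{a_i^{(\ell)}}\le 1$ is preserved, and the index set $I_\ell$ still satisfies $\min I_\ell\to\infty$ because in the enumeration $(Y_1,-Y_1,Y_2,-Y_2,\dots)$ the $Y_i$ and $-Y_i$ appear consecutively. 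This bookkeeping is routine; I do not expect it to be the main obstacle. The genuinely load-bearing step is recognizing that $\beta_n\to\infty$ forces the weighted sum to vanish while $\sum 1/k_n<\infty$ keeps the product factor bounded — i.e., that Pryce's growth-rate choices $k_n$ and $\beta_n$ are exactly calibrated so that Lemma~\ref{maincor}'s right-hand side tends to $0$ along any FCC.
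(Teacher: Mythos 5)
Your proof is correct and follows essentially the same route as the paper: you reduce an FCC term to a single combination $\sum_{i\in I}a_iY_i$ with $\sum_{i\in I}\abs{a_i}\le 1$ and $\min I\to\infty$, apply Lemma~\ref{maincor}, bound the product uniformly via $\sum_n 1/k_n<\infty$, and let $\beta_n\to\infty$ kill the remaining sum. The only cosmetic difference is that the paper uses monotonicity of $(\beta_i)$ to bound the sum by $1/\beta_{\min I}$, whereas you run a direct $\eta$-argument from $\beta_n\to\infty$; these are the same estimate.
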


\begin{proof}
Let $\ep > 0$, $I$ be a finite subset of $\N$ and $a_i, i\in I$, be real numbers such that $\sum_{i\in I}\abs{a_i} \leq 1$.
Observe that by the choice of $(k_n)$, $\sum_{n=1}^\infty \frac{1}{k_n} <\infty$ and hence $\prod^\infty_{i=1}(1+ \frac{2}{\pi k_{i}})$ converges to a nonzero finite number.
Therefore, there exists a finite constant $c$ such that $\prod_{i\in J}(1+ \frac{2}{\pi k_{i}})\leq c$ for any finite subset $J$ of $\N$. Let $i_0 = \min I$.
By Lemma \ref{maincor} and the fact that $(\beta_i)$ is an increasing positive sequence,
\[ \m\Big(\Bigabs{\sum_{i\in I}a_iY_i} > \ep\Big) \leq \frac{2c}{\pi\ep}\sum_{i\in I}\frac{\abs{a_i}}{\beta_i}\leq \frac{2c}{\pi\ep\beta_{i_0}}
.\]

Observe that if  $V \in \co(Y_j, -Y_j, Y_{j+1}, -Y_{j+1},\dots)$, then  there exists a finite set $I\subset \{j,j+1,\dots\}$  and real numbers $a_i$, $i\in I$, with  $\sum_{i\in I}\abs{a_i} \leq 1$ such that $V=\sum_{i\in I}a_iY_i$.
Thus, if $V \in \co(Y_j, -Y_j, Y_{j+1}, -Y_{j+1},\dots)$, then
\[ \m(\abs{V} > \ep) \leq  \frac{2c}{\pi\ep\beta_{j}}.\]
This, together with $\beta_j\longrightarrow\infty$, completes the proof of the lemma.
\end{proof}

We now proceed to a general result toward local convexity. Denote by $\cB_n$ the open ball of radius $\frac{1}{n}$ centered at $0$ with respect to  the metric $d(X',X'')=\E_\bP[\abs{X'-X''}\wedge \one]$ on $\ZL(\bP)$.

\begin{lem}\label{l4.6}
Let $(R_k)$ be a bounded sequence in $\OL(\bP)$ such that any FCC of $(R_1,-R_1,\linebreak R_2,-R_2,\dots)$ converges to $0$ in probability.  Set
\[\cL = \Big\{\sum_{k=1}^\infty a_kR_k: \sum_{k=1}^\infty\abs{a_k}\leq 1\Big\}.\]
Then for any $m\in\N$, there exists $n\in\N$ such that if $G \in \cB_n\cap \cL$, then $G = H+J$, where   $\E_\bP[\abs{H}]\leq \frac{1}{m}$ and $J\in \co(R_k, -R_k)^\infty_{k=m+1}$.
\end{lem}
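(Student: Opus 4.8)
\noindent The plan is to argue by contradiction. Fix $m$ and suppose that for every $n\in\N$ there is $G_n\in\cB_n\cap\cL$ admitting no decomposition $G_n=H+J$ with $\E_\bP[\abs H]\le\frac1m$ and $J\in\co(R_k,-R_k)_{k=m+1}^\infty$. Write $G_n=\sum_k a_k^{(n)}R_k$ with $\sum_k\abs{a_k^{(n)}}\le1$; since $G_n\in\cB_n$ means $d(G_n,0)<\frac1n$, we have $G_n\to0$ in probability. Because each coordinate sequence $(a_k^{(n)})_n$ lies in $[-1,1]$, a diagonal extraction yields a subsequence, which I relabel $(G_n)$, along which $a_k^{(n)}\to a_k$ for every $k$; Fatou then gives $\sum_k\abs{a_k}\le1$, so, with $C:=\sup_k\E_\bP[\abs{R_k}]<\infty$, the series $G:=\sum_k a_kR_k$ converges in $\OL(\bP)$.

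The key preliminary is a uniform tail bound extracted from the FCC hypothesis: for every $\eta>0$ there is $N_1(\eta)$ with $\bP\big(\bigabs{\sum_{k\ge N_1(\eta)}c_kR_k}>\eta\big)\le\eta$ whenever $\sum_k\abs{c_k}\le1$. For finite convex combinations this follows by contradiction --- if for each $N$ some $W_N\in\co(R_k,-R_k)_{k=N}^\infty$ had $\bP(\abs{W_N}>\eta)>\eta$, one splices these $W_N$'s (each being a finite convex combination, hence an admissible early term) into a single forward convex combination of $(R_1,-R_1,R_2,-R_2,\dots)$ with infinitely many terms of probability-mass $>\eta$ off $(-\eta,\eta)$, contradicting convergence to $0$ --- and the $\ell^1$-bounded case then follows by truncating and renormalizing. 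Feeding this bound, $a_k^{(n)}\to a_k$, and the summability of $(\abs{a_k})$ into the three-part estimate of $G_n-G$ (finite head difference, which tends to $0$ in $\OL(\bP)$; tail of $G_n$, controlled by the uniform bound; tail of $G$, controlled by $L^1$-summability) shows $G_n\to G$ in probability. Since also $G_n\to0$ in probability, $G=0$ a.s., i.e.
\[\sum_{k\le m}a_kR_k=-\sum_{k>m}a_kR_k\quad\text{in }\OL(\bP).\]

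Using this identity I build the forbidden decomposition for $n$ large. Put $\delta_n:=\sum_{k\le m}(a_k^{(n)}-a_k)R_k$, a finite sum whose coefficients tend to $0$, so $\E_\bP[\abs{\delta_n}]\to0$; eliminating the indices $\le m$ via the identity gives $G_n=\delta_n+Z_n$, where $Z_n:=\sum_{k>m}(a_k^{(n)}-a_k)R_k$. Fix a small $\ep>0$; choose $N\ge m+1$ with $\sum_{k>N}\abs{a_k}\le\ep/C$ (possible uniformly in $n$), and for each $n$ an $N''(n)>N$ with $\sum_{k>N''}\abs{a_k^{(n)}}\le\ep/C$ and $\sum_{k>N''}\abs{a_k}\le\ep/C$. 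Split $Z_n$ into the finite block $B_n:=\sum_{m<k\le N}(a_k^{(n)}-a_k)R_k$, the middle block $\widehat J_n:=\sum_{N<k\le N''}(a_k^{(n)}-a_k)R_k$, and a remainder $E_n:=\sum_{k>N''}(a_k^{(n)}-a_k)R_k$. Then $\E_\bP[\abs{B_n}]\le\ep$ for $n$ large (a finite sum with coefficients tending to $0$), $\E_\bP[\abs{E_n}]\le2\ep$ (by the choice of $N''$), and $\widehat J_n$ is supported on indices $>m$ with coefficient-mass $\mu_n\le1+\ep/C$. If $\mu_n\le1$, then $\widehat J_n\in\co(R_k,-R_k)_{k=m+1}^\infty$ after padding the missing mass by $\tfrac12R_{N''+1}+\tfrac12(-R_{N''+1})=0$; if $1<\mu_n\le1+\ep/C$, use $J_n:=\mu_n^{-1}\widehat J_n\in\co(R_k,-R_k)_{k=m+1}^\infty$, which differs from $\widehat J_n$ by at most $C(\mu_n-1)\le\ep$ in $\OL(\bP)$. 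In either case, with $H_n:=G_n-J_n$ we get $\E_\bP[\abs{H_n}]\le\E_\bP[\abs{\delta_n}]+4\ep\le\frac1m$ once $\ep$ is a small enough multiple of $1/m$ and $n$ is large, contradicting the choice of $G_n$.

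I expect the main obstacle to be conceptual rather than computational: the hypothesis only forces $G$ small \emph{in probability}, which says nothing about $\E_\bP[\abs G]$, so the head $\sum_{k\le m}a_kR_k$ cannot merely be absorbed into $H$. Passing to the coordinatewise limit is what turns this into the genuine $\OL(\bP)$-identity $\sum_k a_kR_k=0$, and that identity is exactly what lets the head be rewritten, modulo the $\OL(\bP)$-null perturbation $\delta_n$, as a tail over indices $>m$, which $J$ can then absorb; the remaining wrinkle, the coefficient-mass $\mu_n$ slightly above $1$, is cleared by the rescaling.
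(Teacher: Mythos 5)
Your argument is correct, and it shares the paper's overall skeleton (contradiction, coordinatewise limits $a_k^{(n)}\to a_k$, the conclusion $\sum_k a_kR_k=0$, then a forbidden decomposition of $G_n$ for large $n$), but the two key steps are executed by genuinely different means. The paper, after extracting the limits, chooses a rapidly growing sequence $(k_n)$ with $\sum_{k\le k_{n-1}}\abs{a_{nk}-a_k}<2^{-n}$ and $\sum_{k>k_n}\abs{a_{nk}}<2^{-n}$ and splits $G_n=U_n+V_n+W_n$ into head, middle and tail of $G_n$'s \emph{own} coefficients; because the middle blocks run over consecutive index ranges, $(V_n)$ is itself an FCC of $(R_1,-R_1,R_2,-R_2,\dots)$ and so tends to $0$ in probability, while $U_n\to\sum_k a_kR_k$ and $W_n\to0$ in $\OL(\bP)$; this simultaneously forces $\sum_k a_kR_k=0$ and makes $H_n=U_n+W_n\to 0$ in $\OL(\bP)$ with $J_n=V_n$ already a convex combination over indices beyond $m$. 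You instead first prove a uniform far-tail bound in probability for all $\ell^1$-normalized coefficient sums (by splicing putative bad blocks into a single FCC), use it to get $G_n\to G$ in probability and hence $G=0$, and then transfer the head $\sum_{k\le m}a_kR_k$ to the tail via the identity $\sum_{k\le m}a_kR_k=-\sum_{k>m}a_kR_k$, building $J_n$ from the \emph{difference} coefficients $a_k^{(n)}-a_k$ over a middle range, with padding by $\tfrac12 R_j+\tfrac12(-R_j)$ or rescaling by $\mu_n^{-1}$ to land in $\co(R_k,-R_k)_{k=m+1}^\infty$. Both routes work; the paper's consecutive-block extraction is leaner in that it needs no separate tail lemma, no mass-exceeding-one repair, and produces $H_n\to0$ in $\OL(\bP)$ outright (and the same blocking is reused in Proposition~\ref{p4.9}), whereas your tail estimate is a reusable quantitative statement in its own right and spares the double extraction of the indices $(m_n,k_n)$, at the small cost of the padding/rescaling bookkeeping and of tracking the $\varepsilon$-slack when passing the in-probability bound to infinite sums.
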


\begin{proof}
Suppose otherwise.  We can find $m\in \N$ and a sequence $(G_n)$ with $G_n \in \cB_n\cap \cL$ such that  $G_n$ cannot be decomposed as desired for any $n\in\N$.  Write $G_n = \sum_{k=1}^\infty a_{nk}R_k$, where $\sum_{k=1}^\infty\abs{a_{nk}}\leq 1$ for each $n$.
By taking a subsequence if necessary, we may assume that $\lim_n a_{nk} = a_k$ exists for all $k\in\N$.
Note that $\sum_{k=1}^\infty \abs{a_k} \leq 1$.
Set $k_0=1$. Take $m_1$ such that $\sum_{k=1}^{k_0}\abs{a_{m_1,k}-a_k}\leq \frac{1}{2}$ and then take $k_1>k_0$ such that $\sum_{k=k_1+1}^\infty\abs{a_{m_1,k}}\leq \frac{1}{2^1}$. Now take $m_2>m_1$ such that $\sum_{k=1}^{k_1}\abs{a_{m_2,k}-a_k}\leq \frac{1}{2^2}$ and then take $k_2>k_1$ such that $\sum_{k=k_2+1}^\infty\abs{a_{m_2,k}}\leq \frac{1}{2^2}$. Repeating this process, we obtain a subsequence $(G_{m_n})$ of $(G_n)$ and a sequence $(k_n)$ in $\N$. Note that $G_{m_n}\in \cB_{m_n}\cap \cL\subset \cB_{n}\cap \cL$ for each $n\in\N$. Thus we abuse the notation to rewrite  $(G_{m_n})$ as $(G_n)$. Then
\begin{equation}\label{e4.3} \sum_{k=1}^{k_{n-1}}|a_{nk}-a_k| < \frac{1}{2^n}\quad\text{and}\quad \sum^\infty_{k=k_n+1}|a_{nk}| < \frac{1}{2^n}\end{equation}
for all $n\in\N$.
Let
\begin{equation}\label{e4.4}
U_n = \sum_{k=1}^{k_{n-1}}a_{nk}R_k, \quad V_n =\sum_{k=k_{n-1}+1}^{k_n}a_{nk}R_k \quad \text{and } W_n = \sum_{k=k_n+1}^\infty a_{nk}R_k. \end{equation}
Then $G_n = U_n +V_n +W_n$.  Clearly, $(U_n)$ converges to $\sum_{k=1}^\infty a_kR_k$ in $\OL(\bP)$ and hence in $\ZL(\bP)$, and $(W_n)$ converges to $0$
in $\OL(\bP)$ and hence in $\ZL(\bP)$.
Note that $(V_n)$ can be expressed as an FCC of $(R_1,-R_1,R_2,-R_2,\dots)$ and hence converges to $0$ in $\ZL(\bP)$ by assumption.
Since $(G_n)$ converges to $0$ in $\ZL(\bP)$,  $U_n = G_n -V_n - W_n\longrightarrow 0$ in $\ZL(\bP)$ as well.  Therefore, $\sum_{k=1}^\infty a_k R_k = 0$ a.s.
Let $H_n = U_n+W_n$ and $J_n = V_n$.
Then $G_n = H_n +J_n$, $(H_n)$ converges in $\OL(\bP)$ to $  0$, and $J_n \in \co(R_k,-R_k)_{k=k_{n-1}+1}^{k_n}$.  For sufficiently large $n$, we see that
\[ \E_\bP[\abs{H_n}]\leq \frac{1}{m}\;\;\text{ and   }\;\;J_n \in \co(R_k,-R_k)_{k=m+1}^{\infty},\] contrary to the choice of $G_n$'s.
This establishes the lemma.
\end{proof}

Recall that the convex-solid hull $\co\so(\cA)$ is convex and solid.  Furthermore, it is an easy fact that the solid hull of a convex set in $\mathbb{L}_+^0(\bP)$ is also convex.

\begin{prop}\label{p4.6}
Let $(R_k)$ be a bounded sequence in $\OL(\bP)$ and let
\[\cL = \Big\{\sum_{k=1}^\infty a_kR_k: \sum_{k=1}^\infty\abs{a_k}\leq 1\Big\}.\]
\begin{enumerate}
\item\label{p4.61} If every FCC of $(R_1,-R_1,R_2,-R_2,\dots)$ converges to $0$ in probability, then
 the $\ZL(\bP)$-topology on $\cL$ is locally convex at $0$.
\item\label{p4.62} If  every FCC of $(\abs{R_k})$ converges to $0$ in probability, then the $\ZL(\bP)$-topology on $\cL$ is locally convex-solid at $0$.
\end{enumerate}
\end{prop}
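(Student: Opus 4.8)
The plan is to handle both parts by a single construction and to let Lemma \ref{l4.6} do the main work. Fix a $\ZL(\bP)$-neighborhood $\cU$ of $0$ and choose $n_0\in\N$ with $\cB_{n_0}\subseteq\cU$. For a suitable $n$ (to be produced by Lemma \ref{l4.6}) the candidate neighborhood will be $\cW=\co\big(\cB_n\cap\cL\big)$ in part \eqref{p4.61} and $\cW=\co\so\big(\cB_n\cap\cL\big)$ in part \eqref{p4.62}. In either case $\cW$ is convex, and in the second case also solid by the Riesz Decomposition Theorem (exactly as in the remark preceding the proposition and in Proposition \ref{p5.3}); moreover $\cW\cap\cL$ contains the relatively open set $\cB_n\cap\cL\ni 0$, so it is a relative $\ZL(\bP)$-neighborhood of $0$ in $\cL$. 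Thus everything reduces to showing $\cW\subseteq\cB_{n_0}$ for an appropriate $n$. (In part \eqref{p4.62} one may invoke Lemma \ref{l4.6}, since its hypothesis is implied by that of \eqref{p4.62}: if $(Y_j)$ is an FCC of $(R_1,-R_1,R_2,-R_2,\dots)$, then $\abs{Y_j}$ is dominated by an FCC of $(\abs{R_k})$, hence tends to $0$ in probability.)

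The one preliminary I would isolate first is a uniform version of the FCC hypotheses: if $(Z_k)$ is a sequence every FCC of which converges to $0$ in probability, then for each $\ep>0$, $\sup\big\{\m(\abs V>\ep):V\in\co(Z_k)_{k>m}\big\}\to0$ as $m\to\infty$. This follows by contradiction: if it failed for some $\ep,\delta>0$, one could recursively pick $m_1<m_2<\cdots$ and $V_j\in\co(Z_k)_{k>m_j}$, each $V_j$ using only indices exceeding all indices used by $V_{j-1}$, with $\m(\abs{V_j}>\ep)>\delta$; since $m_j\ge j$, the sequence $(V_j)$ is then an FCC of $(Z_k)$, a contradiction. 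Applied to $(Z_k)=(R_1,-R_1,R_2,-R_2,\dots)$ this yields uniform smallness of the convex tail sets $\co(R_k,-R_k)_{k>m}$ needed in \eqref{p4.61}; applied to $(Z_k)=(\abs{R_k})$ (whose FCCs are already convex combinations of its tails) it yields uniform smallness of $\co(\abs{R_k})_{k>m}$ needed in \eqref{p4.62}.

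Now fix $\ep>0$, use the previous paragraph to pick $M$ so that $\m(\abs V>\ep)\le\ep$ for every $V$ in the relevant tail set with $m\ge M$, then pick $m\ge\max(M,\lceil1/\ep\rceil)$ and let $n=n(m)$ be given by Lemma \ref{l4.6} (enlarging $n$ freely, since that only shrinks $\cB_n$). For part \eqref{p4.61}, take $G=\sum_i t_iG_i\in\cW$ with $G_i\in\cB_n\cap\cL$; Lemma \ref{l4.6} writes $G_i=H_i+J_i$ with $\E_\bP[\abs{H_i}]\le\frac1m$ and $J_i\in\co(R_k,-R_k)_{k>m}$, so with $H=\sum_i t_iH_i$ and $J=\sum_i t_iJ_i$ we get $\E_\bP[\abs H]\le\frac1m$ and, \emph{crucially}, $J\in\co(R_k,-R_k)_{k>m}$ because that set is convex. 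Hence $d(G,0)\le\E_\bP[\abs H]+\E_\bP[\abs J\wedge\one]\le\frac1m+\ep+\m(\abs J>\ep)\le\frac1m+2\ep$, and choosing $\ep$ comparable to $1/n_0$ at the outset makes this $<1/n_0$, i.e. $\cW\subseteq\cB_{n_0}\subseteq\cU$. Part \eqref{p4.62} is the same with absolute values: for $G=\sum_i t_iG_i\in\cW$ with $\abs{G_i}\le\abs{\ti{G_i}}$, $\ti{G_i}\in\cB_n\cap\cL$, decompose $\ti{G_i}=H_i+J_i$ as above, observe $\abs{J_i}\le V_i$ for some $V_i\in\co(\abs{R_k})_{k>m}$ (absorb the signs in the convex representation of $J_i$), and conclude $\abs G\le\sum_i t_i\abs{H_i}+\sum_i t_iV_i$ with $\E_\bP\big[\sum_i t_i\abs{H_i}\big]\le\frac1m$ and $\sum_i t_iV_i\in\co(\abs{R_k})_{k>m}$; the same estimate then gives $\cW\subseteq\cB_{n_0}$.

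I expect the conceptual crux, rather than any single computation, to be the realization that Lemma \ref{l4.6} confines the ``non-$\OL$-small'' part $J_i$ of each element $G_i$ of $\cB_n\cap\cL$ into the \emph{convex} set $\co(R_k,-R_k)_{k>m}$ (or, in the solid case, makes $\abs{J_i}$ dominated by a member of the convex set $\co(\abs{R_k})_{k>m}$), so that passing to convex hulls — and in \eqref{p4.62} also to solid hulls via Riesz decomposition — does not spoil the estimate; the uniform tail smallness observation then closes the argument. The step requiring the most care is the bookkeeping in \eqref{p4.62}: verifying that $\co\so(\cB_n\cap\cL)$ is genuinely convex-solid and that domination is preserved through both the solid hull and the convex combination, which is routine once the correct dominating sets are named.
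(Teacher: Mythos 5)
Your proposal is correct and follows essentially the same route as the paper: the same candidate neighborhoods $\co(\cB_n\cap\cL)$ and $\co\so(\cB_n\cap\cL)$, the same decomposition from Lemma \ref{l4.6}, the same reduction of case \eqref{p4.62} to the hypothesis of Lemma \ref{l4.6} by domination, and the same uniform smallness of the tail convex hulls (which the paper asserts directly and you prove explicitly). The only difference is presentational: the paper argues via set inclusions into $\frac1m\cB_{\OL(\bP)}+\co(R_k,-R_k)^\infty_{k=m+1}$ (resp.\ its solid analogue), while you carry out the equivalent element-wise metric estimate.
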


\begin{proof}
Let $r\in \N$ be given.  We will find $n\in\N$ such that $\co(\cB_n\cap \cL)\subseteq \cB_r$ in Case \eqref{p4.61} and $\co\so(\cB_n\cap \cL) \subseteq \cB_r$ in Case \eqref{p4.62}, from which the desired conclusions follow.
For Case \eqref{p4.62}, note that if $(U_k)$ is an FCC of $(R_1,-R_1,R_2,-R_2,\dots)$,
then there is an FCC $(V_k)$ of $(\abs{R_1},\abs{R_1},\abs{R_2},\abs{R_2},\dots)$ such that $\abs{U_k}\leq V_k$ for all $k$.  Hence in Case \eqref{p4.62}, every FCC of $(R_1,-R_1,R_2,-R_2,\dots)$
converges to $0$ in probability as well. Therefore, Lemma \ref{l4.6} applies in both cases.

Choose $s\in \N$ such that $\cB_s + \cB_s\subseteq \cB_r$. From the respective assumptions, there exists $m\in \N$ such that $\frac{1}{m}\cB_{\OL(\bP)} \subseteq \cB_s$ and that
\begin{align*}
\co(R_k,-R_k)^\infty_{k=m+1} & \subseteq \cB_s \text{ in Case (1)},\\
\co(|R_k|)^\infty_{k=m+1} & \subseteq \cB_s \text{ in Case (2)}.
\end{align*}
By  Lemma  \ref{l4.6}, there exists $n$ such that
\begin{equation}\label{e4.6}
\cB_n\cap \cL\subseteq  \frac{1}{m}\cB_{\OL(\bP)}+\co(R_k, -R_k)^\infty_{k=m+1}.\end{equation}
Since the right hand side of \eqref{e4.6} is a convex set, in Case \eqref{p4.61},
\[
\co(\cB_n\cap \cL) \subseteq  \frac{1}{m}
\cB_{\OL(\bP)}+\co(R_k, -R_k)^\infty_{k=m+1} \subseteq \cB_s + \cB_s \subseteq \cB_r.\]
In Case \eqref{p4.62},  note that $\co(R_k, -R_k)^\infty_{k=m+1} \subseteq \so\co(|R_k|)^\infty_{k=m+1}$ and the latter set is convex and solid.  It follows from \eqref{e4.6} that
\begin{equation}\label{e4.7} \cB_n\cap \cL  \subseteq  \frac{1}{m}\cB_{\OL(\bP)}+ \so\co(\abs{R_k})^\infty_{k=m+1}
\end{equation}
and that the right hand side is a convex solid set.
Therefore, since $\cB_s$ is solid,
\begin{align*}\co\so(\cB_n\cap \cL) &\subseteq \frac{1}{m}\cB_{\OL(\bP)}+ \so\co(\abs{R_k})^\infty_{k=m+1} \\&\subseteq \cB_s + \so \cB_s = \cB_s + \cB_s \subseteq \cB_r.\end{align*}
This completes the proof of the proposition.
\end{proof}

We need one more technical lemma toward local convexity of the $\ZL[0,1]$-topology on $\cK$.

\begin{lem}\label{l4.7}
Let $\cA$ be a convex circled set in a topological vector space $(\cX,\tau)$.
Then the relative $\tau$ topology on $\cA$ is locally convex if and only if it is locally convex at $0$.
\end{lem}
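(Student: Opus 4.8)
Since $\cA$ is convex and circled, translation by elements of $\cA$ combined with the homogeneity coming from circledness should let me transport a local-convexity witness at $0$ to a witness at an arbitrary point $X\in\cA$. The one direction is trivial: if the relative topology is locally convex on $\cA$ it is in particular locally convex at $0$. So the content is the converse. Fix $X\in\cA$ and a $\tau$-neighborhood $\cU$ of $0$; I want a convex subset $\cW'$ of $\cU$ containing $0$ such that $(X+\cW')\cap\cA$ is a relative neighborhood of $X$.

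\textbf{Key steps.} First I would use continuity of the vector-space operations to pick a $\tau$-neighborhood $\cV$ of $0$ with $\cV+\cV\subseteq\cU$ and (using circledness of $\cU$, or just shrinking) a balanced neighborhood $\cV_0\subseteq\cV$. Next, apply local convexity at $0$: there is a convex set $\cW_0\subseteq\cV_0$ with $0\in\cW_0$ such that $\cW_0\cap\cA$ is a relative neighborhood of $0$ in $\cA$. The crucial geometric observation is the following: because $\cA=-\cA$ is convex, for any $X\in\cA$ the map $Z\mapsto \tfrac12(Z-X)$ sends $\cA$ into $\cA$ (indeed $\tfrac12(Z-X)=\tfrac12 Z+\tfrac12(-X)\in\cA$ since $-X\in\cA$); more generally $Z\mapsto \tfrac12 Z+\tfrac12 Y_0\in\cA$ for any fixed $Y_0\in\cA$. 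I would exploit this ``midpoint with $-X$'' contraction to relate a relative neighborhood of $X$ in $\cA$ to a relative neighborhood of $0$ in $\cA$: if $\cO$ is a relative neighborhood of $0$, then $2\cO - (\,\text{something}\,)$ pulls back appropriately. Concretely, I expect to show that $\big(X+2(\cW_0\cap\cA)-\cA\big)$-type manipulations produce, after intersecting with $\cA$, a relative neighborhood of $X$ contained in $X+\cU$; the circledness is what lets me absorb the factor $2$ and the sign changes so that the final displacement set stays inside $\cU$. Finally I would check convexity of the displacement set $\cW'$ I produce (it will be built from convex pieces $\cW_0$, $-\cW_0$, and scalar multiples, intersected or summed in a way that preserves convexity) and that $0\in\cW'\subseteq\cU$.

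\textbf{Main obstacle.} The delicate point is bookkeeping the factor of $2$ that appears when passing through the midpoint map: naively $\tfrac12(\cdot-X)$ turns a neighborhood of $X$ into a neighborhood of $0$, but going back one multiplies by $2$, which could push the displacement out of $\cU$. Circledness ($\cA=-\cA$) together with the convexity of $\cA$ is precisely the hypothesis that tames this: it guarantees that the ``doubled'' set, once intersected with $\cA$, still lies where we need it, because $\cA$ is stable under the relevant affine contractions toward $0$. Making this quantitative — choosing $\cV$ small enough at the outset so that after doubling we land in $\cU$ — is the step I would be most careful about. Everything else (convexity of the constructed $\cW'$, that intersections with $\cA$ of relative neighborhoods are relative neighborhoods, continuity of translations) is routine.
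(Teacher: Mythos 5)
Your plan is essentially the paper's own argument: the decisive point in both is that for $Z\in\cA$ the midpoint $\frac{1}{2}(Z-X)=\frac{1}{2}Z+\frac{1}{2}(-X)$ stays in $\cA$ (convexity plus circledness), so displacements from $X$ within $\cA$ lie in $2\cA$, and the factor $2$ is absorbed by invoking local convexity at $0$ for a halved neighborhood (your $\cV$ with $\cV+\cV\subseteq\cU$). The verification you deferred goes through exactly as sketched: taking a convex $\cW_0\subseteq\cV$ and a neighborhood $\cO_0$ of $0$ with $\cO_0\cap\cA\subseteq\cW_0\cap\cA$, every $Z\in(X+2\cO_0)\cap\cA$ has $\frac{Z-X}{2}\in\cO_0\cap\cA\subseteq\cW_0$, so $(X+2\cW_0)\cap\cA$ is a relative neighborhood of $X$ with $2\cW_0=\cW_0+\cW_0\subseteq\cU$ convex --- the same mechanism the paper uses when it writes $x_i=2\big(\frac{x_0+x_i}{2}-\frac{x_0}{2}\big)\in 2\cA$ and passes to $\co\big((x_0+\cU)\cap\cA\big)$.
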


\begin{proof}
Let $x_0\in \cA$ and let $\cV$ be a $\tau$-neighborhood of $0$.
It suffices to produce a convex set $\cC$ and a $\tau$-neighborhood $\cU$ of $0$ such that
\[ (x_0+\cU)\cap \cA \subseteq \cC \subseteq (x_0+\cV)\cap \cA.\]
Since the relative $\tau$ topology on $\cA$ is locally convex at $0$, there is a  $\tau$-neighborhood $\cU$
of $0$ such that $\co\big(\frac{\cU}{2}\cap \cA\big) \subseteq \frac{\cV}{2}$.  Thus $\co(\cU\cap 2\cA) \subseteq \cV$.
Let
\[ \cC = \co\big((x_0+\cU)\cap \cA\big).\]  To complete the proof, we show that $\cC \subseteq (x_0+\cV)\cap \cA$.
Let $x\in \cC$.  Write $x = \sum^n_{i=1}a_i(x_0+x_i)$, where $a_i\geq 0$, $\sum^n_{i=1}a_i =1$, $x_i\in \cU$ and $x_0+x_i\in \cA$.
Then
\[ x_i = 2\big(\frac{x_0+x_i}{2}-\frac{x_0}{2}\big) \in 2\cA.\]
Hence $x_i\in \cU \cap 2\cA$.  Therefore, $\sum^n_{k=1}a_ix_i\in \co(\cU\cap 2\cA) \subseteq \cV$.
Thus $x = x_0 + \sum^n_{k=1}a_ix_i\in x_0+\cV$.  Clearly, $\cC\subseteq \cA$.
Hence, $x\in (x_0+\cV)\cap \cA$, as desired.
\end{proof}

Combining Proposition \ref{fcc Y},  Proposition \ref{p4.6} and Lemma \ref{l4.7}, we have
\begin{prop}\label{p4.8}
The $\ZL[0,1]$-topology on $\cK$ defined by \eqref{e4.0} is locally convex on $\cK$.
\end{prop}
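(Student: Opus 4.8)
The plan is to deduce Proposition \ref{p4.8} by assembling the three ingredients already proved. The set $\cK$ in \eqref{e4.0} has exactly the shape of the set $\cL$ in Proposition \ref{p4.6}, with the underlying measure space being $([0,1],\m)$ and the sequence $(R_k)$ being $(Y_n)$ from \eqref{e4.00}. The sequence $(Y_n)$ is bounded in $\OL[0,1]$ since $\E_\m[\abs{Y_n}] = \tfrac1\pi$ for all $n$, so the hypotheses on $(R_k)$ in Proposition \ref{p4.6} are met. Moreover, Proposition \ref{fcc Y} tells us precisely that every FCC of $(Y_1,-Y_1,Y_2,-Y_2,\dots)$ converges to $0$ in probability, which is exactly hypothesis \eqref{p4.61} of Proposition \ref{p4.6}.

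Thus the first step is to invoke Proposition \ref{p4.6}\eqref{p4.61} with $R_k = Y_k$ to conclude that the $\ZL[0,1]$-topology on $\cK$ is locally convex at $0$. The second step is to note that $\cK$ is a convex circled set in the topological vector space $\ZL[0,1]$ (this was already remarked right after \eqref{e4.0}: $\cK$ is convex, circled, and bounded). Therefore Lemma \ref{l4.7} applies, and local convexity of the relative topology on $\cK$ at $0$ upgrades to local convexity on all of $\cK$. This completes the proof.

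There is essentially no obstacle here: the proposition is a bookkeeping consequence of the preceding three results, and the only things to check are the trivial facts that $(Y_n)$ is $\OL[0,1]$-bounded and that $\cK$ is circled, both of which were already recorded. I would write the proof in two or three sentences, explicitly citing Proposition \ref{fcc Y} for the FCC hypothesis, Proposition \ref{p4.6}\eqref{p4.61} for local convexity at $0$, and Lemma \ref{l4.7} (using circledness and convexity of $\cK$) to pass from local convexity at $0$ to local convexity on $\cK$.
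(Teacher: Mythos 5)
Your proposal is correct and is exactly the paper's argument: the paper deduces Proposition \ref{p4.8} by the same combination of Proposition \ref{fcc Y} (FCC hypothesis), Proposition \ref{p4.6}\eqref{p4.61} (local convexity at $0$), and Lemma \ref{l4.7} (circledness and convexity upgrade this to local convexity on all of $\cK$). No gaps; the auxiliary checks you mention ($\OL$-boundedness of $(Y_n)$ and circledness of $\cK$) are indeed the only ones needed and were recorded in the paper.
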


The following results conclude $\ZL[0,1]$-compactness of $\cK$.

\begin{prop}\label{p4.9}
Let $\cB$ be the closed unit ball of $\ell^1$ with the relative $\sigma(\ell^1,c_0)$-topology (which coincides with the topology of coordinatewise convergence).
Suppose that $(R_k)$ is  a bounded sequence in $\OL(\bP)$ such that every FCC of $(R_1,-R_1,R_2,-R_2,\dots)$ converges to $0$ in probability.
Define a map $T: \cB \to \ZL[0,1]$ by $T\big((a_k)_k\big) = \sum_{k=1}^\infty a_k R_k$.  Then $T$ is continuous and $T(\cB)$ is compact in $\ZL(\bP)$.
\end{prop}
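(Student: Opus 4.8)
The plan is to prove three things in order: that $T$ is well defined, that $T$ is continuous, and that $T(\cB)$ is compact. The last is immediate once continuity is known, since $\ell^1=c_0^*$, so by Banach--Alaoglu the closed ball $\cB$ is $\sigma(\ell^1,c_0)$-compact, and continuous images of compact sets are compact. For well-definedness, set $M=\sup_k\E_\bP[\abs{R_k}]<\infty$; for $(a_k)\in\cB$ and $m<n$ we have $\E_\bP\big[\abs{\sum_{k=m+1}^n a_kR_k}\big]\le M\sum_{k=m+1}^n\abs{a_k}\to 0$ as $m\to\infty$, so the partial sums are Cauchy in $\OL(\bP)$ and the series $\sum_k a_kR_k$ converges in $\OL(\bP)$, hence in $\ZL(\bP)$.

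The real content is the continuity of $T$. I would use the truncations $T_N((a_k))=\sum_{k=1}^N a_kR_k$. Each coordinate functional $(a_k)\mapsto a_j$ is $\sigma(\ell^1,c_0)$-continuous, being the pairing with $e_j\in c_0$, so each $T_N$ is continuous from $\cB$ into $\OL(\bP)$ and hence into $\ZL(\bP)$. Since a uniform limit of continuous maps into a metric space is continuous, it suffices to show that $\sup_{(a_k)\in\cB}d\big(T((a_k)),\,T_N((a_k))\big)\to 0$ as $N\to\infty$, where $d(X,Y)=\E_\bP[\abs{X-Y}\wedge\one]$. For $(a_k)\in\cB$ the remainder $T((a_k))-T_N((a_k))=\sum_{k>N}a_kR_k$ is either $0$ or of the form $cZ$ with $c=\sum_{k>N}\abs{a_k}\le 1$ and $Z\in\co(R_k,-R_k)_{k=N+1}^\infty$, and $d(cZ,0)\le d(Z,0)$ since $\abs{cZ}\le\abs{Z}$; so it is enough to prove the following claim.

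\emph{Tail Claim:} for every $\rho>0$ there is $N$ with $\co(R_k,-R_k)_{k=N+1}^\infty\subseteq\{X:d(X,0)<\rho\}$. Here I would invoke the hypothesis on forward convex combinations by a gliding-hump argument. Suppose the claim fails for some $\rho$; then for each $N$ there is $V_N\in\co(R_k,-R_k)_{k=N+1}^\infty$ with $d(V_N,0)\ge\rho$, and each $V_N$ is a finite convex combination of terms $R_k,-R_k$ with $k>N$. Let $\phi(N)$ be the largest index occurring in $V_N$ and put $N_1=1$, $N_{j+1}=\phi(N_j)$; then the $V_{N_j}$ use pairwise disjoint increasing blocks of indices, and since $N_j\ge j$ one checks directly that $(V_{N_j})_j$ is a genuine forward convex combination of $(R_1,-R_1,R_2,-R_2,\dots)$. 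By hypothesis it converges to $0$ in probability, i.e.\ $d(V_{N_j},0)\to 0$, contradicting $d(V_{N_j},0)\ge\rho$. This proves the Tail Claim, hence the continuity of $T$, and then the compactness of $T(\cB)$ as explained above.

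I expect the Tail Claim --- equivalently, the statement that the tails of the series $\sum a_kR_k$ are \emph{uniformly} small over $(a_k)\in\cB$ in the topology of convergence in probability --- to be the only genuine obstacle; the rest is bookkeeping. One should keep in mind that $T$ is in general not continuous into $\OL(\bP)$ (the tails are uniformly small only in probability, not in $\OL$-norm, as already happens for the sequence $(Y_k)$ of Example A, for which $\E_\m[\abs{Y_k}]$ is a positive constant while $Y_k\to 0$ in probability), so it is essential that the target is $\ZL(\bP)$.
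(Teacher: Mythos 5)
Your proof is correct, but it takes a genuinely different route from the paper's. The paper exploits the metrizability of the relative $\sigma(\ell^1,c_0)$-topology on $\cB$ and argues sequentially: given $x_n\to x$ coordinatewise, it passes to a subsequence satisfying the diagonal estimates \eqref{e4.3} and reuses the decomposition $Tx_n=U_n+V_n+W_n$ of \eqref{e4.4} already set up for Lemma \ref{l4.6}; the head and tail converge in $\OL(\bP)$, while the middle blocks $V_n$ form an FCC of $(R_1,-R_1,R_2,-R_2,\dots)$ and hence tend to $0$ in probability. You instead prove a uniform tail estimate --- for every $\rho>0$, $\co(R_k,-R_k)_{k=N+1}^\infty$ lies in the $d$-ball of radius $\rho$ for all large $N$, via a gliding-hump construction that turns a putative sequence of bad blocks into an FCC --- and then get continuity of $T$ as a uniform limit of the obviously weak$^*$-continuous truncations $T_N$. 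Both arguments use the FCC hypothesis as the essential input; yours buys a quantitatively uniform statement (the tails of $\sum_k a_kR_k$ are small in probability uniformly over $\cB$) and avoids subsequence bookkeeping and the appeal to metrizability, while the paper's is shorter in context because it recycles the machinery of Lemma \ref{l4.6}. One small imprecision to patch: the remainder $\sum_{k>N}a_kR_k$ is an infinite combination, so after normalizing it lies in the $\OL(\bP)$-closure of $\co(R_k,-R_k)_{k=N+1}^\infty$ rather than in the convex hull itself; since its partial sums converge in $\OL(\bP)$, hence in the metric $d$, the Tail Claim still yields $d\big(\sum_{k>N}a_kR_k,0\big)\le\rho$, which is all the uniform-limit argument needs. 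Your closing observation that $T$ need not be continuous into $\OL(\bP)$ is correct and consistent with Example A.
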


\begin{proof}
The second statement follows from the first one since  $\cB$ is $\sigma(\ell^1,c_0)$-compact. Note that the relative $\sigma(\ell^1,c_0)$-topology on $\cB$ is metrizable.
Let $(x_n)$ be a sequence in $\cB$ that converges coordinatewise to $x\in \cB$.  It is enough to show that a subsequence of $(Tx_n)$ converges to $Tx$ in $\ZL(\bP)$.
Write $x_n = (a_{nk})_k$ and $x = (a_k)_k$.
By passing to a subsequence, we may assume that the inequalities \eqref{e4.3} hold.
Define $U_n, V_n$ and $W_n$ as in \eqref{e4.4}.
Then $Tx_n = U_n +V_n +W_n$, $(U_n +W_n)$ converges in $\OL(\bP)$ to $\sum_{k=1}^\infty a_kR_k = Tx$, and $(V_n)$ converges to $0$ in probability by assumption.
It follows that $(Tx_n)$ converges to $Tx$ in probability, as desired.
\end{proof}

The following  is now immediate from Proposition \ref{fcc Y} and Proposition \ref{p4.9}.

\begin{cor}\label{c4.10}
The set $\cK$ defined by \eqref{e4.0} is a compact subset of $\ZL[0,1]$.
\end{cor}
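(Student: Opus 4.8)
The plan is to realize $\cK$ as the continuous image of a compact set, namely as $T(\cB)$, where $\cB$ is the closed unit ball of $\ell^1$ and $T$ is the map of Proposition \ref{p4.9}, applied with $R_k = Y_k$ and the underlying measure taken to be $\m$.

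First I would verify that the sequence $(Y_k)$ satisfies the hypotheses of Proposition \ref{p4.9}. Boundedness of $(Y_k)$ in $\OL[0,1]$ is immediate from the identity $\E_\m[\abs{Y_n}] = \frac{1}{\pi}$ recorded just below \eqref{e4.00}; in particular, for $(a_k)\in\ell^1$ the series $\sum_{k=1}^\infty a_kY_k$ converges absolutely in $\OL[0,1]$, so $T$ is well defined on all of $\cB$. The remaining hypothesis---that every FCC of $(Y_1,-Y_1,Y_2,-Y_2,\dots)$ converge to $0$ in probability---is precisely the conclusion of Proposition \ref{fcc Y}. Hence Proposition \ref{p4.9} applies and gives that $T\colon \cB\to\ZL[0,1]$ is continuous and that $T(\cB)$ is compact in $\ZL[0,1]$.

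It then only remains to observe that $T(\cB)=\cK$. Indeed, by definition of $\cB$ as the closed unit ball of $\ell^1$,
\[ T(\cB) = \Big\{\sum_{k=1}^\infty a_kY_k : \sum_{k=1}^\infty\abs{a_k}\le 1\Big\},\]
which is exactly the set $\cK$ of \eqref{e4.0}. Therefore $\cK$ is compact in $\ZL[0,1]$, as claimed. I do not anticipate any genuine obstacle here; the only mild point requiring care is to ensure that the series defining $T$ makes sense at every point of $\cB$ and that its range is all of $\cK$ (rather than a proper or merely dense subset), which is taken care of by the $\OL[0,1]$-boundedness of $(Y_k)$ noted above.
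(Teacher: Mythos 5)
Your proposal is correct and is exactly the paper's argument: the corollary is deduced by applying Proposition \ref{p4.9} with $R_k=Y_k$, whose FCC hypothesis is supplied by Proposition \ref{fcc Y}, and observing that $T(\cB)=\cK$. The extra remark that $\E_\m[\abs{Y_n}]=\frac{1}{\pi}$ makes $T$ well defined on all of $\cB$ is a harmless elaboration of the same proof.
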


We need a final fact to complete the proof of Theorem \ref{cea}.

\begin{lem}\label{Z_n}Let $(Y_n)$ be as defined in \eqref{e4.00}.
For each $n\in \N$, let
\[ Z_n = \frac{1}{n}\sum^n_{m=1}{|Y_{n+m}|}.\]
Then $(Z_n)$ converges to $\frac{\one}{\pi}$ in $L^2[0,1]$ and hence in probability.
\end{lem}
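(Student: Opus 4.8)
The plan is to prove the $L^2$-convergence by a first/second-moment argument; convergence in probability is then automatic since $\m$ is a finite measure.

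First I would observe that $Z_n$ has constant mean: using the identity $\E_\m[\abs{Y_j}] = \frac{1}{\pi}$ established above,
\[ \E_\m[Z_n] = \frac{1}{n}\sum_{m=1}^n \E_\m[\abs{Y_{n+m}}] = \frac{1}{\pi} \]
for every $n$. Consequently $\bignorm{Z_n - \tfrac{\one}{\pi}}_{L^2}^2 = \var(Z_n)$, so it suffices to show $\var(Z_n)\to 0$. Since the $X_j$ are independent, so are the $\abs{Y_j}$, whence
\[ \var(Z_n) = \frac{1}{n^2}\sum_{m=1}^n \var(\abs{Y_{n+m}}) \le \frac{1}{n^2}\sum_{m=1}^n \E_\m[Y_{n+m}^2]. \]

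Next I would compute the second moment directly from the Cauchy density: for each $j$,
\[ \E_\m[Y_j^2] = \frac{1}{\pi\beta_j^2}\int_{-k_j}^{k_j}\frac{t^2}{1+t^2}\,\mathrm{d}t = \frac{2(k_j - \arctan k_j)}{\pi\beta_j^2}\le \frac{2k_j}{\pi\beta_j^2}. \]
Both $(k_j)$ and $(\beta_j)$ are increasing, so for $n+1\le j\le 2n$ one has $k_j\le k_{2n}$ and $\beta_j\ge\beta_n$, giving $\var(Z_n)\le \frac{2k_{2n}}{\pi n\beta_n^2}$. Since $k_n = n(\log(n+2))^p > n$, it follows that $\beta_n = \log(1+k_n^2)\ge 2\log k_n\ge 2\log n$; plugging in $k_{2n}=2n(\log(2n+2))^p$ yields
\[ \var(Z_n)\le \frac{(\log(2n+2))^p}{\pi(\log n)^2}\longrightarrow 0 \]
because $(\log(2n+2))^p\sim(\log n)^p$ and $p<2$. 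This completes the proof.

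I do not expect a genuine obstacle: the whole argument is a routine moment computation. The one point to watch is the bookkeeping of the logarithmic factors — and it is precisely the hypothesis $p<2$ (as opposed to $p>1$, which was used in Proposition~\ref{fcc Y}) that makes the variance bound, of order $(\log n)^{p-2}$, tend to zero.
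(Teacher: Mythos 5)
Your proof is correct and follows essentially the same route as the paper: constant mean $\tfrac{1}{\pi}$, independence to split the variance, the second-moment bound $\E_\m[Y_j^2]\leq \frac{2k_j}{\pi\beta_j^2}$, monotonicity of $(k_j)$ and $(\beta_j)$, and the estimate $\beta_n\geq 2\log k_n\geq 2\log n$, yielding the same final bound $\frac{(\log(2n+2))^p}{\pi(\log n)^2}\to 0$. Nothing to add.
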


\begin{proof}
Since $\E_\m[\abs{Y_n}] = \frac{1}{\pi}$ for all $n$, $\E_\m[Z_n] = \frac{1}{\pi}$ for all $n\in\N$.
Also, $(\abs{Y_n})$ is a sequence of independent random variables.  Hence,
\begin{align*}
\E\Big[\bigabs{Z_n - \frac{1}{\pi}}^2\Big] =& \var(Z_n) = \frac{1}{n^2}\sum^n_{m=1}\var(\abs{Y_{m+n}})\\
 \leq& \frac{1}{n^2}\sum^n_{m=1}\E_\m[ Y_{m+n}^2]\\
 =& \frac{1}{n^2}\sum^n_{m=1}\frac{2}{\beta_{n+m}^2}\int^{k_{m+n}}_0
 \frac{t^2}{\pi(1+t^2)}\mathrm{d}t\\
 \leq& \frac{1}{n^2}\sum^n_{m=1}\frac{2k_{m+n}}{\pi\beta_{n+m}^2}
\leq \frac{2k_{2n}}{n\pi\beta_{n}^2}.
\end{align*}
It is easy to see that
$\beta_n \geq 2\log k_n \geq 2\log n$.
Therefore,
\[\E\Big[\bigabs{Z_n - \frac{1}{\pi}}^2\Big] \leq \frac{(\log (2n+2))^p}{\pi(\log n)^2} \longrightarrow 0
\]
as $n\longrightarrow \infty$, and the lemma is proved.
\end{proof}

\begin{proof}[Completion of proof of  Theorem \ref{cea}]  By Proposition \ref{p4.8} and Corollary \ref{c4.10}, it remains to verify that there does not exist $\Q \sim \m$ such that the $\ZL(\Q)$- and $\OL(\Q)$-topologies agree on $\cK$. Suppose otherwise. Let $\Q$ be as such.
Let $\cU$ be a $\ZL[0,1]$-neighborhood of $0$ such that $\frac{\one}{\pi}\notin \ol{\cU}$, where the closure is taken in $\ZL[0,1]$.
By Theorem \ref{lcsolid}, the $\ZL[0,1]$-topology on $\cK$ is locally convex-solid at $0$.
Hence there exists a convex solid set $\cW\subseteq \cU$ such that $\cW\cap \cK$ is a neighborhood of $0$ for the relative $\ZL[0,1]$-topology on $\cK$.
Note that $Y_n\longrightarrow 0$ in $\ZL[0,1] $ (see e.g.~Proposition  \ref{fcc Y}). Thus there exists $n_0\in\N$ such that $Y_n \in \cW$ for all $n> n_0$.
Then $Z_n \in \cW$ for all $n> n_0$.
But $Z_n \longrightarrow \frac{\one}{\pi}$ in probability by Lemma \ref{Z_n}.  Hence, $\frac{\one}{\pi} \in \ol{\cW} \subseteq \ol{\cU}$, contrary to the choice of $\cU$.
This contradiction completes the proof.
\end{proof}

We include a remark on the importance of positivity in Theorems \ref{t1} and \ref{t5}.

\begin{rem}
Put $\cK'=\co\big(\{0\}\cup (Y_n)_{n=1}^\infty\big)$. Then $\cK'\subset \cK$, so that the relative $\ZL[0,1]$-topology is also locally convex on $\cK'$. The same arguments as in the above proof show that there does not exist $\Q \sim \m$ such that the $\ZL(\Q)$- and $\OL(\Q)$-topologies agree on $\cK'$. Hence Theorem \ref{t5} fails without positivity. Since  $Y_n\longrightarrow 0$ in $\ZL[0,1] $ and $\ol{\cK'}\subset \cK$, the main implication  \eqref{t13}$\implies$\eqref{t14} in Theorem \ref{t1} fails without positivity as well.
\end{rem}

\section{Construction of Example B}

In this section, we construct a convex bounded set $\cK$ in $\mathbb{L}^1_+(\bP)$ on which the relative $\ZL(\bP)$-topology is locally convex but there does not exist $\Q\sim\bP$ such that the $\ZL(\Q)$- and $\OL(\Q)$-topologies coincide on $\cK$. This will establish our final result Theorem \ref{ceb}.

Let $(\Omega_0,\Sigma_0,\bP_0)$ be the two-point probability space on $\Omega_0=\{0,1\}$, where each point is given weight $\frac{1}{2}$.
Let $\Gamma$ be an \emph{uncountable} set and let $(\Om,\Sig,\bP)$ be the  product probability space of $\Gamma\times \N$-copies of  $(\Omega_0,\Sigma_0,\bP_0)$:
$$(\Om,\Sig,\bP)=\prod_{\Gamma\times \N}(\Omega_0,\Sigma_0,\bP_0);$$
cf.~\cite[p.91]{Lo}. Then $$\Om=\prod_{\Gamma\times \N}\{0,1\}=\{0,1\}^{\Gamma\times\N},$$ and a generic point in $\Omega$ is a function $\eta:\Gamma\times\N \to \{0,1\}$.
For a  subset $\Theta$ of $\Gamma$, let $$\Sigma_\Theta=\sigma\Big(\big\{\eta: \eta(\gamma,n) = 0\big\}:\; (\gamma,n)\in \Theta\times\N\Big).$$
Then $\Sigma_\Theta\subset \Sig$, and $\bP$ is nonatomic on $\Sigma_\Theta$. Furthermore, if $\Theta$ and $\Theta'$ are disjoint subsets of $\Gamma$, and $X$ and $Y$ are two random variables that are $\Sigma_\Theta$- and $\Sigma_{\Theta'}$-measurable, respectively, then $X$ and $Y$ are independent. Finally, note that if $A\in\Sig$ and $\bP(A)>0$, then by the construction of $\Sig$ and $\bP$, it is easy to see that there exist a subset $B$ of $A$ and a countable subset  $\Theta$ of $\Gamma$ such that $B\in\Sigma_\Theta$ and $\bP(A\backslash B)=0$.

Let $\gamma\in \Gamma$.
Define random variables on $\Om$ by
\begin{align*}
U_{\gamma,1} = &2\one_{\{\eta: \eta(\gamma,1) = 0\}} ,\\
U_{\gamma,n} = &U_{\gamma,1} + 2^n\one_{\{\eta: \eta(\gamma,i) = 0, 1\leq i\leq n\}},\;\; \text{ if }n\geq 2.
\end{align*}
Clearly, $U_{\gamma,n}\in \mathbb{L}^1_+(\Om,\Sig,\bP)$
and $\E_\bP[U_{\gamma,n}] \leq 2$ for any $(\gamma,n)\in \Gamma\times \N$.
If $\Theta \subseteq \Gamma$, let
\[ \cK_\Theta = \co\big\{U_{\gamma,n}: \gamma\in \Theta, n\in \N\big\},\]
and put
\begin{align}\label{ceb-K}
\cK=\cK_\Gamma=\co\big\{U_{\gamma,n}: \gamma\in \Gamma, n\in \N\big\}.
\end{align}
Clearly, every random variable in $\cK_\Theta$ is $\Sigma_\Theta$-measurable.
Note that if $X\in \cK$, then there is a finite set $\Theta\subseteq \Gamma$ such that $X\in \cK_\Theta$.
Moreover, for any set $\Theta\subseteq \Gamma$, note that $$\cK=\co\big(\cK_\Theta\cup \cK_{\Theta^c}\big)=\Big\{\al X+(1-\al) Y:\;0\leq \al\leq 1,X\in\cK_\Theta, Y\in \cK_{\Theta^c}\Big\}.$$

We first disprove existence of any $\Q$ with the required properties for the set $\cK$ constructed above.

\begin{prop}\label{nonequiv}
Let $\cK$ be as in \eqref{ceb-K}.
There does not exist any $\Q\sim \bP$ such that the $\ZL(\Q)$- and $\OL(\Q)$-topologies agree on $\cK$.
\end{prop}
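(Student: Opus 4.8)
The plan is to argue by contradiction. Suppose there exists $\Q\sim\bP$ such that the $\ZL(\Q)$- and $\OL(\Q)$-topologies agree on $\cK$. By Corollary~\ref{c2.3} (or directly by Proposition~\ref{p2.1}, since only the $\ZL$--$\OL$ agreement is needed), for every $\ep>0$ there is a measurable set $A$ with $\bP(A)>1-\ep$ such that every sequence in $\cK$ converging in probability to a point of $\cK$ converges in $\OL(\bP)$ restricted to $A$; more usefully, by the exhaustion step \eqref{p2.13}, for \emph{any} measurable set $A$ with $\bP(A)>0$ there is a measurable $B\subseteq A$ with $\bP(B)>0$ on which convergence in probability inside $\cK$ upgrades to $\OL(\bP)$-convergence on $B$. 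The strategy is then to localize onto a single coordinate $\gamma$: since $\bP(B)>0$, by the remark about the structure of $\Sig$ there is $B'\subseteq B$ and a \emph{countable} $\Theta\subseteq\Gamma$ with $B'\in\Sigma_\Theta$ and $\bP(B\setminus B')=0$. Because $\Gamma$ is uncountable, we may pick $\gamma\in\Gamma\setminus\Theta$, so $B'$ is independent of the $\sigma$-algebra generated by the $U_{\gamma,n}$'s.

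The key computation is the following. Fix this $\gamma$ and consider the sequence $(U_{\gamma,n})_n$. Each $U_{\gamma,n}\in\cK$, and $U_{\gamma,n}\to U_{\gamma,1}$ in probability: indeed $U_{\gamma,n}-U_{\gamma,1}=2^n\one_{E_n}$ where $E_n=\{\eta:\eta(\gamma,i)=0,\,1\le i\le n\}$ has $\bP(E_n)=2^{-n}\to 0$; since $U_{\gamma,1}\in\cK$, the sequence converges in probability inside $\cK$ to a point of $\cK$. Hence on $B'$ we would have $\E_\bP\big[\abs{U_{\gamma,n}-U_{\gamma,1}}\one_{B'}\big]\to 0$. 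But $\abs{U_{\gamma,n}-U_{\gamma,1}}\one_{B'}=2^n\one_{E_n}\one_{B'}$, and by independence of $B'$ and $E_n$,
\[
\E_\bP\big[2^n\one_{E_n}\one_{B'}\big]=2^n\,\bP(E_n)\,\bP(B')=2^n\cdot 2^{-n}\cdot\bP(B')=\bP(B')=\bP(B)>0 ,
\]
which does not tend to $0$. This is the desired contradiction.

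The main obstacle — and the only delicate point — is the passage from the hypothesis "$\ZL(\Q)$- and $\OL(\Q)$-topologies agree on $\cK$" to a statement about $\OL(\bP)$-convergence on a set that is \emph{independent of the chosen coordinate}. This is exactly what the exhaustion/localization machinery of Proposition~\ref{p2.1}, condition~\eqref{p2.13}, delivers: starting from $A=\Omega$ it produces a set $B$ of positive measure on which $\ZL$-convergence in $\cK$ implies $\OL(\bP)$-convergence on $B$, and then the structural fact about $\Sig$ lets us replace $B$ (up to null sets) by a $\Sigma_\Theta$-measurable set for countable $\Theta$, leaving an unused coordinate $\gamma$. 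Everything else is the elementary independence calculation above. I would also note that the same argument shows, more strongly, that no such $\Q$ exists even if one only asks that the two topologies agree on the single sequence $(U_{\gamma,n})_n\cup\{U_{\gamma,1}\}$ for one $\gamma$ — but the contradiction is cleanest stated as above for $\cK$ itself.
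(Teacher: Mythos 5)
Your proof is correct and follows essentially the same route as the paper's: apply Proposition~\ref{p2.1} to the hypothetical $\Q$ to get a positive-measure set on which convergence in probability within $\cK$ upgrades to $\OL(\bP)$-convergence, shrink it (up to a null set) to a $\Sigma_\Theta$-measurable set with $\Theta$ countable, choose $\gamma\notin\Theta$, and use independence to compute $\E_\bP\big[\abs{U_{\gamma,n}-U_{\gamma,1}}\one_{B'}\big]=\bP(B')\not\longrightarrow 0$, exactly as in the paper. The only caveat concerns your closing aside: for a \emph{fixed} $\gamma$ one can in fact build $\Q\sim\bP$ with $2^n\,\Q\big(\{\eta:\eta(\gamma,i)=0,\ 1\leq i\leq n\}\big)\longrightarrow 0$, so the correct strengthening is that for every $\Q\sim\bP$ there exists some $\gamma$ (depending on $\Q$) for which $(U_{\gamma,n})$ converges to $U_{\gamma,1}$ in probability but not in $\OL(\Q)$ --- the quantifier order matters.
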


\begin{proof}
If the  present proposition fails, then by Proposition \ref{p2.1}, there is a measurable set $A$ with $\bP(A) >0$ such that $\E_\bP\big[\abs{X_n-X}\one_A\big]\longrightarrow 0$ for any sequence $(X_n)$  in $\cK$ that converges in probability to some $X\in \cK$.
By replacing $A$ with a  subset having the same measure, we may assume that there exists a countable subset $\Theta$ of $\Gamma$ such that $A\in\Sigma_\Theta$.
Let $\gamma\in \Gamma\bs \Theta$.
Then $(U_{\gamma,n})_n$ is a sequence in $\cK$ that converges to $U_{\gamma,1}$ in probability.
Consider any $n\geq 2$.  Let $B_n = \big\{\eta: \eta(\gamma,i) = 0, 1\leq i\leq n\big\}$.
Since $A\in \Sigma_\Theta$ and $B_n \in \Sigma_{\{\gamma\}}$, $A$ and $B_n$ are independent sets.
Note that $U_{\gamma,n} - U_{\gamma,1} = 2^n$ on $B_n$ and $\bP(B_n)=\frac{1}{2^n}$.
Thus
\begin{align*}
\E_\bP\big[\abs{U_{\gamma,n}-U_{\gamma,1}}\one_A\big] \geq&\E_\bP\big[\abs{U_{\gamma,n}-U_{\gamma,1}}\one_{A\cap B_n}\big]
= 2^n\bP(A\cap B_n) = 2^n\bP(A)\bP(B_n) = \bP(A).
\end{align*}
This contradicts the choice of the set $A$ and concludes the proof.
\end{proof}

We now turn to the proof that the $\ZL(\bP)$-topology on $\cK$ is locally convex.

\begin{lem}\label{l5.7}
Let $X$ and $Y$ be random variables such that $\bP(\abs{X+Y}>\ep) < \ep$ for some $\ep>0$.
Assume that there exist a measurable set $A$ with $\bP(A) =\frac{1}{2}$ and a real number $c$ such that $X \leq c$ on $A$ and $X\geq c$ on $A^c$ and that $\one_A$ and $Y$ are independent.
Then $\bP(\abs{Y+c}> \ep) < 4\ep$.
\end{lem}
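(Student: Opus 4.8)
The plan is to split the event $\{\abs{Y+c}>\ep\}$ into its two tails $\{Y+c>\ep\}$ and $\{Y+c<-\ep\}$, bound each of them by $2\ep$ by comparing with the tail of $X+Y$, and then add. The key point is to pair each tail of $Y+c$ with the correct half of $\Omega$ cut out by $A$: on $A^c$ we have $X\ge c$, so the upper tail of $Y+c$ pushes $X+Y$ up, while on $A$ we have $X\le c$, so the lower tail of $Y+c$ pushes $X+Y$ down. Throughout I would use that the independence of $\one_A$ and $Y$ makes each tail event of $Y+c$ independent of $A$ (and of $A^c$), both of which have probability $\tfrac12$.

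Concretely, for the upper tail: on $A^c$, $X\ge c$, hence if also $Y+c>\ep$ then $X+Y\ge c+Y>\ep$, so
\[ \{Y+c>\ep\}\cap A^c \subseteq \{\abs{X+Y}>\ep\}. \]
By independence, $\tfrac12\,\bP(Y+c>\ep)=\bP(A^c)\,\bP(Y+c>\ep)=\bP\big(\{Y+c>\ep\}\cap A^c\big)\le \bP(\abs{X+Y}>\ep)<\ep$, so $\bP(Y+c>\ep)<2\ep$. Symmetrically, on $A$, $X\le c$, hence if also $Y+c<-\ep$ then $X+Y\le c+Y<-\ep$, so $\{Y+c<-\ep\}\cap A\subseteq\{\abs{X+Y}>\ep\}$; the same independence argument, now using $\bP(A)=\tfrac12$, gives $\bP(Y+c<-\ep)<2\ep$.

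Finally, since $\{\abs{Y+c}>\ep\}$ is the disjoint union of $\{Y+c>\ep\}$ and $\{Y+c<-\ep\}$, adding the two estimates yields $\bP(\abs{Y+c}>\ep)<4\ep$, as claimed. There is no genuine analytic difficulty here; the only thing to be careful about is the bookkeeping of orientations — matching the upper tail with $A^c$ (where $X\ge c$) and the lower tail with $A$ (where $X\le c$) — so that the hypothesis $X\le c$ on $A$, $X\ge c$ on $A^c$ is used in the right direction in each case.
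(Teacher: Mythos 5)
Your proof is correct and follows essentially the same argument as the paper: pair the lower tail of $Y+c$ with $A$ (where $X\leq c$) and the upper tail with $A^c$ (where $X\geq c$), use independence and $\bP(A)=\bP(A^c)=\tfrac12$ to bound each tail by $2\ep$, and add. No substantive differences.
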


\begin{proof}
We have
\begin{align*}
\bP(A) \bP(Y < -c-\ep) = &\bP(A\cap\{Y < -c-\ep\}) \leq \bP(\{X\leq c\}\cap\{Y < -c-\ep\})\\
\leq &\bP(X+Y< -\ep)\leq \bP(\abs{X+Y} > \ep\} < \ep.
\end{align*}
Hence, $ \bP(Y < -c-\ep) < 2\ep$.
Similarly, by considering $A^c$, we obtain that $\bP(Y> -c+\ep) < 2\ep$.
Combining these two inequalities gives the desired result.
\end{proof}

\begin{lem}\label{p5.8}
Let $\Theta$ be a  finite subset of $\Gamma$ and $X\in \cK_\Theta$.
Suppose that $\big(\al_kX_k+(1-\al_k)Y_k\big)$ converges in probability to $X$,
where $X_k\in\cK_\Theta$, $Y_k\in \cK_{\Theta^c}$ and $0\leq \al_k\leq 1$ for all $k\in\N$.
Then $(\al_kX_k)$ converges to $X$ in probability and $\big((1-\al_k)Y_k\big)$ converges to $0$ in probability.
\end{lem}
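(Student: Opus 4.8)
The plan is to exploit two structural features of the construction. First, every member of $\cK_\Theta$ is $\Sigma_\Theta$-measurable while every member of $\cK_{\Theta^c}$ is $\Sigma_{\Theta^c}$-measurable, so the two families are mutually independent. Second, every member of $\cK_\Theta$ vanishes identically on a fixed event of positive probability. For the latter I would take
\[
C = \big\{\eta:\ \eta(\gamma,1) = 1\ \text{for all}\ \gamma\in\Theta\big\},
\]
which lies in $\Sigma_\Theta$ and, since $\Theta$ is finite and the defining coordinate events are independent, satisfies $\bP(C) = 2^{-\abs{\Theta}} > 0$. For every $\gamma\in\Theta$ and $n\in\N$ one has $U_{\gamma,n} = 0$ on $C$: there $\eta(\gamma,1)=1$, so $U_{\gamma,1}=0$, and the extra indicator term defining $U_{\gamma,n}$ for $n\geq 2$ is supported on $\{\eta(\gamma,1)=0\}$ and hence also vanishes on $C$. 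Since elements of $\cK_\Theta$ are finite convex combinations of such $U_{\gamma,n}$'s, every element of $\cK_\Theta$ vanishes on $C$; in particular $X$ and all the $X_k$ vanish on $C$.

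The main step is then a short computation. Write $Z_k = \al_k X_k + (1-\al_k)Y_k$, so $Z_k\to X$ in probability. On $C$ we have $\al_k X_k = 0 = X$, hence $Z_k = (1-\al_k)Y_k$ there; as $(1-\al_k)Y_k\geq 0$, for every $\ep>0$
\[
\big\{(1-\al_k)Y_k > \ep\big\}\cap C = \big\{\abs{Z_k - X} > \ep\big\}\cap C \subseteq \big\{\abs{Z_k - X} > \ep\big\}.
\]
The event $\{(1-\al_k)Y_k>\ep\}$ is $\Sigma_{\Theta^c}$-measurable and $C\in\Sigma_\Theta$, so they are independent, giving
\[
\bP\big((1-\al_k)Y_k > \ep\big)\,\bP(C) = \bP\Big(\big\{(1-\al_k)Y_k > \ep\big\}\cap C\Big) \leq \bP\big(\abs{Z_k - X} > \ep\big)\longrightarrow 0.
\]
Since $\bP(C)>0$ this forces $\bP\big((1-\al_k)Y_k>\ep\big)\to 0$, i.e.\ $(1-\al_k)Y_k\to 0$ in probability; and then $\al_k X_k = Z_k - (1-\al_k)Y_k\to X$ in probability as well.

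The only real obstacle is spotting the test set $C$ and checking that all of $\cK_\Theta$ vanishes on it — this is where finiteness of $\Theta$ enters (an infinite $\Theta$ would give $\bP(C)=0$). Everything after that is routine manipulation of the independence of $\Sigma_\Theta$ and $\Sigma_{\Theta^c}$ together with positivity of the $X_k$. I note that one can alternatively argue via Lemma \ref{l5.7}, applying it with $A$ a ``median set'' in $\Sigma_\Theta$ for the $\Sigma_\Theta$-measurable variable $\al_k X_k - X$ to see that $(1-\al_k)Y_k$ clusters around a bounded sequence of constants $-c_k$, and then passing to a subsequence along which $\al_k$ and $c_k$ converge and using that the essential infimum of $X$ is $0$ (again a consequence of the vanishing on $C$) to force the limit to be $0$; but the direct argument above is shorter and is the one I would use.
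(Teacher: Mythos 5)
Your argument is correct, and it takes a genuinely different route from the paper. The paper first proves an auxiliary symmetrization lemma (Lemma \ref{l5.7}): it chooses a median set $A_k\in\Sigma_\Theta$ with $\bP(A_k)=\tfrac12$ for the $\Sigma_\Theta$-measurable variable $\al_kX_k-X$, uses independence of $\one_{A_k}$ and $Y_k$ to conclude that $(1-\al_k)Y_k+c_k\to 0$ in probability for suitable constants $c_k$, and only then invokes the set $B=\cap_{\gamma\in\Theta}\{\eta(\gamma,1)=1\}$ (your $C$) to force $c_k\to 0$ via $-c_k\one_B=(\al_kX_k-c_k)\one_B\to X\one_B=0$. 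You instead test directly against $C$: since every element of $\cK_\Theta$ vanishes on $C$, on that event $\abs{Z_k-X}=(1-\al_k)Y_k$, and independence of $\{(1-\al_k)Y_k>\ep\}\in\Sigma_{\Theta^c}$ from $C\in\Sigma_\Theta$ together with $\bP(C)=2^{-\abs\Theta}>0$ kills $(1-\al_k)Y_k$ at once; this bypasses Lemma \ref{l5.7} entirely and is shorter. What your shortcut buys is elementarity; what it costs is generality, since it leans on two special features of the construction — positivity of the elements of $\cK_{\Theta^c}$ (so that convergence of $\bP((1-\al_k)Y_k>\ep)$ suffices) and the fact that \emph{all} of $\cK_\Theta$ vanishes on one fixed positive-probability set — whereas the paper's median argument needs only $\Sigma_\Theta$-measurability, nonatomicity and independence, using the vanishing set merely to pin down the constants. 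In the present setting both hypotheses are available, so your proof is a valid and cleaner alternative.
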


\begin{proof}
There is a sequence $(\ep_k)$ decreasing to $0$ such that
\[ \bP\big(\abs{\al_k X_k -X + (1-\al_k)Y_k}>\ep_k\big) < \ep_k \text{ for all $k\in \N$.}
\]
Since $\al_kX_k-X$ is $\Sigma_\Theta$-measurable and $\bP$ is nonatomic on this $\sigma$-algebra, there exist a
set $A_k \in \Sigma_\Theta$ with $\bP(A_k) = \frac{1}{2}$ and a real number $c_k$ such that $\al_kX_k-X\leq c_k$ on $A_k$ and
$\al_kX_k-X\geq c_k$ on $A_k^c$.
By choice, $\one_{A_k}$ and $Y_k$ are independent.
Hence by Lemma \ref{l5.7}, $\bP\big(\abs{(1-\al_k)Y_k+c_k}> \ep_k\big) <4\ep_k$ for all $k$.
Therefore, $\big((1-\al_k)Y_k + c_k\big)$ converges to $0$ in probability.
It follows that $(\al_kX_k -c_k)$ converges to $X$ in probability.  To complete the proof, it suffices to show that $c_k\longrightarrow 0$.
Observe that since $X\in\cK_\Theta$ and $X_k\in \cK_\Theta$ for all $k$, all
$X_k$'s and $X $ vanish on the set
\[B = \cap_{\gamma\in \Theta}\big\{\eta: \eta(\gamma,1) = 1\big\}.\] Since  $\Theta$ is finite, $\bP(B) > 0$. Thus $ -c_k\one_B= (\al_kX_k-c_k)\one_B  \longrightarrow X\one_B = 0$ in $\ZL(\bP)$ implies that   $c_k \longrightarrow 0$, as desired.
\end{proof}

\begin{lem}\label{p5.9}
Let $\cK$ be as in \eqref{ceb-K}. Then no sequence in $\cK$ converges to $0$ in probability.
\end{lem}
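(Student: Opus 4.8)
The plan is to prove something substantially stronger than stated: that $\cK$ is \emph{uniformly bounded away from $0$ in probability}, i.e.\ that $\inf_{X\in\cK}\bP\big(X\geq\tfrac14\big)\geq\tfrac12$. This at once rules out any sequence in $\cK$ converging to $0$ in probability. The key structural observation to exploit is that every generator dominates its first term: $U_{\gamma,n}\geq U_{\gamma,1}=2\one_{E_\gamma}$, where $E_\gamma=\{\eta:\eta(\gamma,1)=0\}$. Hence, writing an arbitrary element of $\cK$ as a finite convex combination $X=\sum_j\lambda_jU_{\gamma_j,n_j}$ and collecting the weights of repeated indices, we get $X\geq 2V$ with $V=\sum_{\gamma\in F}\mu_\gamma\one_{E_\gamma}$, where $F\subseteq\Gamma$ is finite, $\mu_\gamma\geq0$, and $\sum_{\gamma\in F}\mu_\gamma=1$. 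So it suffices to bound $\bP\big(V\geq\tfrac14\big)$ below uniformly over all such $V$.

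The random variables $\one_{E_\gamma}$, $\gamma\in F$, are independent (they depend on disjoint blocks of coordinates) with $\bP(E_\gamma)=\tfrac12$; thus $\E_\bP[V]=\tfrac12$ and $\E_\bP\big[(V-\tfrac12)^2\big]=\tfrac14\sum_{\gamma\in F}\mu_\gamma^2\leq\tfrac14 t$, where $t=\max_{\gamma\in F}\mu_\gamma$. I would then split into two regimes. If $t\geq\tfrac18$, choose $\gamma_0\in F$ with $\mu_{\gamma_0}\geq\tfrac18$; then $X\geq 2\mu_{\gamma_0}\one_{E_{\gamma_0}}\geq\tfrac14\one_{E_{\gamma_0}}$, so $\bP\big(X\geq\tfrac14\big)\geq\bP(E_{\gamma_0})=\tfrac12$. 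If instead $t\leq\tfrac18$, then Chebyshev's inequality gives $\bP\big(\abs{V-\tfrac12}\geq\tfrac14\big)\leq 16\cdot\tfrac{t}{4}=4t\leq\tfrac12$, so $\bP\big(V\geq\tfrac14\big)\geq\tfrac12$, and since $X\geq 2V$ this yields $\bP\big(X\geq\tfrac12\big)\geq\tfrac12$. In either case $\bP\big(X\geq\tfrac14\big)\geq\tfrac12$ for every $X\in\cK$, which finishes the proof.

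There is no real obstacle here; the only point that needs a little thought is the dichotomy between a convex combination that concentrates almost all of its mass on a single index $\gamma_0$ (handled by the first case via the single event $E_{\gamma_0}$) and one that spreads its mass out (handled by the second case, where the weight vector has small $\ell^\infty$-norm, so $V$ concentrates near its mean $\tfrac12$). One could also avoid the case split entirely by applying the Paley--Zygmund inequality to $V$: since $\E_\bP[V^2]=\tfrac14+\tfrac14\sum_{\gamma\in F}\mu_\gamma^2\leq\tfrac12$, one gets $\bP\big(V>\tfrac14\big)\geq\dfrac{(\E_\bP[V])^2}{4\,\E_\bP[V^2]}\geq\tfrac18$ directly, and hence $\bP\big(X>\tfrac12\big)\geq\tfrac18$ for all $X\in\cK$.
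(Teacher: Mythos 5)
Your proof is correct, and it proceeds by a genuinely different route from the paper. You share the paper's key structural observation, namely $U_{\gamma,n}\geq U_{\gamma,1}=2\one_{E_\gamma}$ with $E_\gamma=\{\eta:\eta(\gamma,1)=0\}$, so that every $X\in\cK$ dominates $2V$ for some convex combination $V=\sum_{\gamma\in F}\mu_\gamma\one_{E_\gamma}$ of independent Bernoulli$(\tfrac12)$ indicators. From there the paper argues by contradiction along a sequence: it extracts (after a small perturbation of coefficients) a forward convex combination $(W_k)$ of $(U_{\gamma_j,1})_j$ that would converge to $0$ in probability, and then uses uniform integrability (the $U_{\gamma_j,1}$ are bounded by $2$) together with $\E_\bP[W_k]=1$ to reach a contradiction. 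You instead prove a stronger, uniform anti-concentration statement, $\inf_{X\in\cK}\bP(X\geq\tfrac14)\geq\tfrac12$, using independence of the $\one_{E_\gamma}$ and a second-moment bound (Chebyshev with the dichotomy on $t=\max_\gamma\mu_\gamma$, or Paley--Zygmund without any case split). Your computations check out: $\E_\bP[V]=\tfrac12$, $\operatorname{Var}(V)=\tfrac14\sum_\gamma\mu_\gamma^2\leq\tfrac{t}{4}$, and both cases of the dichotomy give the claimed bound, which immediately forbids convergence to $0$ in probability. What your approach buys is a quantitative and sequence-free statement obtained by completely elementary means (no Koml\'os-type relabelling, no FCC, no uniform integrability), at the price of invoking the independence of the coordinates $(\gamma,1)$, which the paper's expectation argument for this particular lemma does not need; the paper's route, on the other hand, stays closer to the FCC machinery used throughout the rest of the article.
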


\begin{proof}
Assume that some sequence $(X_k)$ in $\cK$ converges to $0$ in probability.
Choose a countable subset $\Theta$ of $\Gamma$ such that $X_k\in \cK_\Theta$ for all $k$.
Enumerate $\Theta$ as $\{\gamma_1,\gamma_2,\dots\}$ and express
\[ X_k = \sum^{m_k}_{j=1}a_{kj}V_{kj},
\]
where $m_k\in \N$, $a_{kj}\geq 0$, $\sum^{m_k}_{j=1}a_{kj} = 1$ and $V_{kj} \in \cK_{\{\gamma_j\}}$ if $1\leq j\leq m_k$.  For convenience, let $a_{kj} = 0$ if $j > m_k$.
Since $V_{kj} \in \cK_{\{\gamma_j\}}$, $V_{kj} \geq U_{\gamma_j,1}\geq 0$.
As a result, $(\sum^{m_k}_{j=1}a_{kj}U_{\gamma_j,1})_k$ converges to $0$ in probability.  In particular,
$(a_{kj})_k$ converges to $0$ for each $j$.
This allows us to perturb $\sum^{m_k}_{j=1}a_{kj}U_{\gamma_j,1}$ slightly, when $k$ is large, by removing the first few terms and adjusting coefficients of the remaining terms, ending up with a new convex combination. Thus by taking a subsequence of $k\in\N$ if necessary, we can find an FCC $(W_k)$ of $(U_{\gamma_j,1})_j$ such that
\[\E_\bP\Big[\Bigabs{\sum^{m_k}_{j=1}a_{kj}
U_{\gamma_j,1}-W_k}\Big]\longrightarrow 0\;\; \text{ as $k\to \infty$}.\]
In particular, $(W_k)$ converges to $0$ in probability.
Being bounded above by the constant $2$, $(U_{\gamma_j,1})_j$ is $\bP$-uniformly integrable, and thus so is $(W_k)$.
Therefore, $\E_\bP[W_k]\longrightarrow0$.
However, since $\E_\bP[U_{\gamma_j,1}]=1$ for all $j$, $E_\bP[W_k] =1$ for all $k$, a contradiction.
\end{proof}

We are ready to present the proof of the local convexity of the $\ZL(\bP)$-topology on $\cK$.

\begin{prop}\label{p5.10}
For any $X\in \cK$, there exists $\Q_X\sim\bP$ ($\Q_X$ depending on $X$) such that if $(X_k)$ is a sequence in $\cK$ that converges to $X$ in probability, then $(X_k)$ converges to $X$ in $\OL(\Q_X)$.
Consequently, the $\ZL(\bP)$-topology on $\cK$ is locally convex.
\end{prop}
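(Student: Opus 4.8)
The plan is to use that $X$ lies in $\cK_\Theta$ for some \emph{finite} $\Theta\subseteq\Gamma$ and, for such $\Theta$, to produce $\Q_X$ by tilting only the countably many coordinates indexed by $\Theta\times\N$. Fix $X\in\cK$ and a finite set $\Theta=\{\gamma_1,\dots,\gamma_d\}\subseteq\Gamma$ with $X\in\cK_\Theta$. Let $\Q_X$ be the product probability measure on $\Om=\{0,1\}^{\Gamma\times\N}$ making all coordinates independent, with $\eta(\gamma,i)$ having the $\bP_0$-law when $\gamma\notin\Theta$ and $\Q_X(\eta(\gamma,i)=0)=q_i:=\frac{i^2}{2(i+1)^2}$ when $\gamma\in\Theta$. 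Since $q_i\in(0,\frac12)$ and $\sum_i(q_i-\frac12)^2<\infty$, Kakutani's dichotomy gives $\Q_X\sim\bP$; moreover $Z:=\frac{\mathrm{d}\Q_X}{\mathrm{d}\bP}$ is $\Sig_\Theta$-measurable, and, writing $B^\gamma_n=\{\eta:\eta(\gamma,i)=0,\ 1\le i\le n\}$, one has $\Q_X(B^\gamma_n)=\prod_{i=1}^nq_i=\frac{1}{2^n(n+1)^2}$, that is $2^n\Q_X(B^\gamma_n)=(n+1)^{-2}$, for every $\gamma\in\Theta$.

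Next I would show that $\cK_\Theta$ is $\Q_X$-uniformly integrable. An arbitrary $W\in\cK_\Theta$ has the form $W=\sum_{j=1}^d\sum_na_{j,n}U_{\gamma_j,n}$ with $a_{j,n}\ge0$ and $\sum_{j,n}a_{j,n}=1$; since $U_{\gamma_j,1}=2\one_{B^{\gamma_j}_1}$ and $U_{\gamma_j,n}=U_{\gamma_j,1}+2^n\one_{B^{\gamma_j}_n}$ for $n\ge2$, write $W=W_0+W_1$ with $0\le W_0\le 2$ and $W_1=\sum_j\sum_{n\ge2}a_{j,n}2^n\one_{B^{\gamma_j}_n}$. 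Breaking the sum defining $W_1$ at an index $N$ and using $2^n\Q_X(B^{\gamma_j}_n)=(n+1)^{-2}$, for any $A\in\Sig$
\[
\E_{\Q_X}\big[W_1\one_A\big]\le 2^N\Q_X(A)\sum_{j,n}a_{j,n}+\frac{1}{(N+1)^2}\sum_{j,n}a_{j,n}\le 2^N\Q_X(A)+\frac{1}{(N+1)^2},
\]
uniformly over $W\in\cK_\Theta$; together with $0\le W_0\le 2$ this yields the uniform integrability. In particular $\E_{\Q_X}[\abs{X}]<\infty$ and, by Vitali's theorem (probability convergence being the same under $\bP$ and $\Q_X$), every sequence in $\cK_\Theta$ converging in probability to $X$ converges to $X$ in $\OL(\Q_X)$.

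It remains to go from $\cK_\Theta$ to $\cK$. Let $(X_k)\subseteq\cK$ with $X_k\to X$ in probability. As $\cK=\co(\cK_\Theta\cup\cK_{\Theta^c})$ with $\Theta^c\ne\emptyset$, write $X_k=\al_kX'_k+(1-\al_k)Y_k$, $X'_k\in\cK_\Theta$, $Y_k\in\cK_{\Theta^c}$, $\al_k\in[0,1]$. By Lemma \ref{p5.8}, $\al_kX'_k\to X$ and $(1-\al_k)Y_k\to0$ in probability. If $\al_k\not\to1$, then $1-\al_{k_i}\ge\delta>0$ along a subsequence, forcing $Y_{k_i}=(1-\al_{k_i})^{-1}\big[(1-\al_{k_i})Y_{k_i}\big]\to0$ in probability and contradicting Lemma \ref{p5.9}; hence $\al_k\to1$, so $X'_k\to X$ in probability and $\E_{\Q_X}[\abs{X'_k-X}]\to0$ by the previous step. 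Also $\E_{\Q_X}[Y_k]=\E_\bP[Y_kZ]=\E_\bP[Y_k]\le 2$, since $Y_k$ is $\Sig_{\Theta^c}$-measurable, $Z$ is $\Sig_\Theta$-measurable, these are independent, and $\E_\bP[Z]=1$. Then $X_k-X=\al_k(X'_k-X)+(1-\al_k)Y_k-(1-\al_k)X$ gives $\E_{\Q_X}[\abs{X_k-X}]\le\al_k\E_{\Q_X}[\abs{X'_k-X}]+(1-\al_k)\E_{\Q_X}[Y_k]+(1-\al_k)\E_{\Q_X}[\abs{X}]\to0$, which is the asserted convergence. As this is precisely Condition \eqref{lcsolid2} of Theorem \ref{lcsolid} for $\cS=\{X\}$, the relative $\ZL(\bP)$-topology on $\cK$ is locally convex-solid, hence locally convex, at each point of $\cK$.

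The delicate point is the pair (construction of $\Q_X$, uniform integrability of $\cK_\Theta$): one must tilt infinitely many coordinates so that $2^n\Q_X(B^\gamma_n)\to0$ while still keeping $\Q_X\sim\bP$, which forces $q_i\to\frac12$, and the decay $2^n\Q_X(B^\gamma_n)=(n+1)^{-2}$ has to be summable enough to absorb arbitrary convex combinations of the $U_{\gamma_j,n}$. Once this is in place, Lemmas \ref{p5.8} and \ref{p5.9} reduce everything to the finite coordinate block $\cK_\Theta$, and the rest is bookkeeping.
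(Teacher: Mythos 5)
Your proof is correct, and while its overall architecture matches the paper's, the key step is carried out by a genuinely different mechanism. The reduction from $\cK$ to the finite block $\cK_\Theta$ --- the decomposition $X_k=\al_kX_k'+(1-\al_k)Y_k$, the appeal to Lemma \ref{p5.8}, and forcing $\al_k\to1$ via Lemma \ref{p5.9} --- is exactly the paper's argument. Where you diverge is in how the measure is produced: the paper never writes down $\Q_X$ explicitly; for each $\ep>0$ it picks $n$ with $\#\Theta/2^n<\ep$ and the set $A=\cap_{\gamma\in\Theta}\{\eta:\eta(\gamma,i)=1\text{ for some }1\le i\le n\}$, on which every element of $\cK_\Theta$ is bounded by $2+2^{n-1}$ (hence $\{Y\one_A:Y\in\cK_\Theta\}$ is $\bP$-uniformly integrable), verifies $\E_\bP[\abs{X_k-X}\one_A]\to0$, and then lets the de-switching machinery of Proposition \ref{p2.1} (built on the exhaustion Lemma \ref{l2.0}) conjure $\Q_X$. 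You instead construct $\Q_X$ explicitly as a product measure tilting only the coordinates in $\Theta\times\N$ so that $2^n\Q_X\big(\{\eta:\eta(\gamma,i)=0,\,1\le i\le n\}\big)=(n+1)^{-2}$, invoke Kakutani's dichotomy for $\Q_X\sim\bP$, and conclude the stronger statement that all of $\cK_\Theta$ is $\Q_X$-uniformly integrable, after which Vitali finishes the local step. Your route buys an explicit density and bypasses Proposition \ref{p2.1} altogether, at the cost of importing Kakutani's theorem (not used in the paper) and of two points that deserve an explicit sentence: Kakutani applies here because only the countably many coordinates in $\Theta\times\N$ are altered within the uncountable product over $\Gamma\times\N$, and the density $Z=\frac{\mathrm{d}\Q_X}{\mathrm{d}\bP}$ is $\Sig_\Theta$-measurable because $\Q_X$ and $\bP$ coincide as product measures on the remaining coordinates; both are standard facts, and your estimate $\E_{\Q_X}[W_1\one_A]\le 2^N\Q_X(A)+(N+1)^{-2}$ is correct, so there is no gap --- the paper's route is simply more self-contained, while yours is more constructive.
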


\begin{proof}
The second statement is easily deduced from the first.
To prove the first statement, pick $X\in \cK$.
By Proposition \ref{p2.1}, it is enough to show that for any $\ep>0$, there is a measurable set $A$ with $\bP(A) > 1-\ep$ such that $\E_\bP[\abs{X_k-X}\one_A]\longrightarrow 0$ for any sequence $(X_k)$ in $\cK$ that converges to $X$ in probability. Let  $\ep>0$ be given. Choose a finite set $\Theta\subseteq \Gamma$ such that $X\in \cK_\Theta$.
Let $n\in \N$ be so large that $\frac{\#\Theta}{2^n} < \ep$.
Set
\begin{align*}
B =& \cup_{\gamma\in \Theta}\big\{\eta: \eta(\gamma,i) =0, 1\leq i \leq n\big\},\\
A = &B^c=\cap_{\gamma\in \Theta}\big\{\eta: \eta(\gamma,i) =1\text{ for some } 1\leq i \leq n\big\}.
\end{align*}
Then $\bP(B) < \ep$ and hence $\bP(A) > 1-\ep$.  For any $\gamma\in\Theta$ and $k\geq n$, $\one_{\{\eta:\eta(\gamma,i)=0, 1\leq i\leq k\}}=0$ on $A$, and thus $0\leq U_{\gamma,k}\one_{A}\leq 2+ 2^{n-1}$ for any $\gamma\in\Theta$ and $k\in\N$, so that $0\leq Y\one_{A}\leq 2+ 2^{n-1}$ for any $Y\in \cK_\Theta$.  Therefore, $\{Y\one_A: Y \in \cK_\Theta\}$ is $\bP$-uniformly integrable.

Let $(X_k)$ be a sequence in $\cK$ that converges to $X$ in probability.
Write
\[ X_k= \al_kY_k + (1-\al_k)Z_k, \text{ where $Y_k\in \cK_\Theta$, $Z_k \in \cK_{\Theta^c}$ and $0\leq \al_k \leq 1$ for all $k$}.\]
By Lemma \ref{p5.8}, $(\al_k Y_k)$ converges to $X$ in probability and $\big((1-\al_k)Z_k\big)$ converges to $0$ in probability.
By the above, $(Y_k\one_A)$ is $\bP$-uniformly integrable; hence, so is $(\al_kY_k\one_A)$. Thus, $$\E_\bP\big[\abs{\al_kY_k-X}\one_A\big]\longrightarrow 0.$$
If $(\al_k)$ does not converge to $1$, then, by considering a subsequence, we may assume that $(\al_k)$
converges to some $\al$ with $0\leq \al < 1$.
Then $Z_k=\frac{1}{1-\alpha_k}[(1-\alpha_k)Z_k]\longrightarrow\frac{1}{1-\alpha } \cdot0=0$ in probability, contrary to Lemma \ref{p5.9}.  Therefore, $(\al_k)$ converges to $1$.
Thus since $\E_\bP[Y]\leq 2$ for any $Y\in\cK$, $$\E_\bP\big[\abs{X_k-\alpha_kY_k}\big]=\E_\bP\big[\abs{1-\alpha_k}Z_k\big]
\longrightarrow 0.$$
It follows that $\E_\bP[\abs{X_k-X}\one_A]\longrightarrow 0$, as desired.
\end{proof}

Obviously, the set $\cK$ constructed is nonseparable in $\ZL(\bP)$.  Neither is $\cK$ closed in $\ZL(\bP)$. Indeed,   for any distinct sequence $(\gamma_j)$, $\big(2\one_{\{\eta:\eta(\gamma_j,1)=0\}}\big)$ is an independent identically distributed sequence with expectation $1$, and thus by Law of Large Numbers, $\one$ is the $\ZL(\bP)$-limit of the arithmetic averages of $(U_{\gamma_j,1})$, which all lie in $\cK$. But it is easy to see that $\one\not\in\cK$.
Thus the question (Q1') from \S1, which is a restricted version of (Q1+), remains open.

\appendix

\section{Alternative Proofs of Theorems \ref{t1} and \ref{t2}}
We close by proving Theorems \ref{t1} and \ref{t2} in the spirit of the present paper, which we believe   gives further insight
into said theorems.  We begin with one more lemma.  Once again, we will use the metric $d(X',X'') = \E_\bP[\abs{X'-X''}]$ to generate the $\ZL(\bP)$-topology.

\begin{lem}\label{l5.5}
Let $\cL$ be a convex circled set in $\OL(\bP)$.  Assume that the $\ZL(\bP)$-topology on $\cL$ is locally convex-solid at $0$.  Then it is uniformly locally convex  solid on $\cL$.
\end{lem}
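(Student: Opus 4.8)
The plan is to derive the uniform assertion directly from the pointwise one at $0$ by a single dilation, exploiting that $\cL$ is convex and symmetric to transport a neighborhood of $0$ to a neighborhood of an arbitrary point of $\cL$. No boundedness or metrizability of $\ZL(\bP)$ is needed, so the argument runs in an arbitrary topological vector space, in the same spirit as Lemma \ref{l4.7}.

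First I would fix a $\ZL(\bP)$-neighborhood $\cU$ of $0$ and apply the hypothesis --- local convex-solidity at $0$, i.e.\ uniform local convex-solidity on the singleton $\{0\}$ --- to the neighborhood $\frac12\cU$. This furnishes a convex solid set $\cW_0\subseteq\frac12\cU$ such that $\cW_0\cap\cL$ is a neighborhood of $0$ in the relative $\ZL(\bP)$-topology on $\cL$; equivalently, there is a $\ZL(\bP)$-neighborhood $\cN$ of $0$ with $\cN\cap\cL\subseteq\cW_0$. The witness I would propose for uniform local convex-solidity on all of $\cL$ is $\cW:=2\cW_0$. It is convex and solid (dilation by a positive scalar preserves both), it satisfies $\cW\subseteq\cU$, and --- the key feature of the construction --- it depends only on $\cU$, not on any chosen point of $\cL$.

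Next I would check that $(X+\cW)\cap\cL$ is a relative neighborhood of $X$ for every $X\in\cL$. Fix $X\in\cL$; since $\cL=-\cL$ we have $-X\in\cL$. If $Z\in\cL$ satisfies $Z-X\in 2\cN$, then $\frac12(Z-X)\in\cN$, and also $\frac12(Z-X)=\frac12 Z+\frac12(-X)\in\cL$ by convexity; hence $\frac12(Z-X)\in\cN\cap\cL\subseteq\cW_0$, so $Z-X\in 2\cW_0=\cW$. Therefore $(X+2\cN)\cap\cL\subseteq(X+\cW)\cap\cL$, and since $X+2\cN$ is a $\ZL(\bP)$-neighborhood of $X$, the set $(X+\cW)\cap\cL$ is a relative neighborhood of $X$. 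As $\cW$ was built from $\cU$ alone, this establishes uniform local convex-solidity on $\cL$.

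I do not anticipate a genuine obstacle; the only things to get right are the bookkeeping of the dilation factor --- absorbed in advance by replacing $\cU$ with $\frac12\cU$ --- and the observation that convexity together with $\cL=-\cL$ is exactly what lets an element $Z$ near $X$ be replaced by $\frac12(Z-X)$ near $0$ without leaving $\cL$. That single algebraic identity is the heart of the matter.
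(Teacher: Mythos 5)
Your proposal is correct and is essentially the paper's own argument: both pass from a convex-solid $\cW_0\subseteq\frac12\cU$ witnessing local convex-solidity at $0$ to the point-independent witness $2\cW_0\subseteq\cU$, using convexity together with $\cL=-\cL$ to see that $\frac12(Z-X)\in\cL$ for $Z,X\in\cL$. The only cosmetic difference is that the paper phrases the neighborhood of $0$ as a metric ball $\cB(0,r)$ while you use a general neighborhood $\cN$ and its dilate $2\cN$; the underlying step is identical.
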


\begin{proof}
Let $\cU$ be a $\ZL(\bP)$-neighborhood of $0$.  Then $\frac{\cU}{2}$ is also a $\ZL(\bP)$-neighborhood of $0$.
By assumption, there is a convex-solid set $\cW \subseteq \frac{\cU}{2}$ such that $\cW\cap \cL$ is a neighborhood of $0$ in the relative $\ZL(\bP)$-topology on $\cL$.
Thus there exists $r>0$ such that $\cB(0,r) \cap \cL \subseteq \cW\cap \cL$.
Let $X\in \cL$.  If $Y \in \cB(X,r)\cap \cL$, then $\frac{Y-X}{2} \in \cL$, since $\cL$ is convex and circled, and $\frac{Y-X}{2}\in \cB(0,r)$, since $\cB(0,r)$ is solid.
Hence, $ \frac{Y-X}{2} \in \cW\cap \cL\subseteq \cW$.
This shows that
\[ \cB(X,r)\cap \cL \subseteq (X+2\cW) \cap \cL.\]
Hence, $(X+2\cW)\cap \cL$ is  a neighborhood of $X$ in the relative $\ZL(\bP)$-topology on $\cL$.
Since $2\cW$ is a convex-solid set contained in $\cU$, the proof is complete.
\end{proof}

\begin{proof}[Proof of Theorem \ref{t1}]
The implications \eqref{t14}$\implies$\eqref{t13}$\implies$\eqref{t12}$\implies$\eqref{t11} are immediate.
Assume that \eqref{t11} holds. WLOG, assume that $(X_n) \cup \{X\}$ is bounded in $\mathbb{L}^1_+(\bP)$.
Let $R_n = X_n -X$ for any $n\in\N$ and
\[ \cL = \Big\{\sum_{k=1}^\infty a_kR_k: \sum_{k=1}^\infty \abs{a_k} \leq 1\}.\]
Note that $\cK \subseteq X + \cL$.
By Lemma \ref{l5.1}, every FCC of $(\abs{R_n})_n$ converges to $0$ in probability.
By Proposition \ref{p4.6}\eqref{p4.62}, the $\ZL(\bP)$-topology on $\cL$ is locally convex-solid at $0$.
By Lemma \ref{l5.5}, the $\ZL(\bP)$-topology on $\cL$ is uniformly locally convex-solid on $\cL$.
By Theorem \ref{t4}, there exists $\Q\sim \bP$ such that the $\ZL(\Q)$- and $\OL(\Q)$-topologies agree on $\cL$. If $(U_k)$ is an FCC of $(R_1,-R_1,R_2,-R_2,\dots)$,
then there is an FCC $(V_k)$ of $(\abs{R_1},\abs{R_1},\abs{R_2},\abs{R_2},\dots)$ such that $\abs{U_k}\leq V_k$ for all $k$. Hence  every FCC of $(R_1,-R_1,R_2,-R_2,\dots)$ also converges to $0$ in probability.
Therefore, it follows from Proposition \ref{p4.9} that $\cL$ is compact in $\ZL(\bP)$.
In particular, $\ol{\cK} \subseteq X+ \cL$.  Hence Condition \eqref{t14} of Theorem \ref{t1} holds. This proves \eqref{t11}$\implies$\eqref{t14}.
\end{proof}

\begin{proof}[Proof of Theorem \ref{t2}]
The implications \eqref{t25}$\implies$\eqref{t24}$\implies$\eqref{t23}$\implies$\eqref{t22} are immediate.  Assume that \eqref{t22} holds. Again, WLOG, assume that $\cK$ is bounded in $\mathbb{L}^1_+(\bP)$. By Proposition \ref{p5.3}, the $\ZL(\bP)$-topology on $\cK$ is locally convex-solid at $0$.  Apply Theorem \ref{lcsolid} with $\cS=\{0\}$ to conclude that there exists $\Q\sim \bP$ such that if $(X_n)$ is a sequence in $\cK$ that converges to $0$ in probability, then $(X_n)$ converges to $0$ in $\OL(\Q)$.

Let $\ep > 0$ be given. By Proposition \ref{p2.1},  there is a measurable set $A$ with $\bP(A) > 1-\ep$ such that $\E_\bP\big[\abs{X_n}\one_A\big]\longrightarrow 0$ for any sequence $(X_n)$ in $\cK$ that converges to $0$ in probability.
Let $(X_n)$ be a sequence in $\cK$ that is Cauchy in  probability.
We want to show that $\E_\bP[\abs{X_n-X_m}\one_A]\longrightarrow0$ as $n,m\longrightarrow\infty$, which implies \eqref{t25}  by Proposition \ref{p2.2}. Suppose otherwise. Then there exists $\delta>0$ and natural numbers $n_1<m_1<n_2<m_2<\cdots$ such that
\begin{align}\label{finaleq}
\E_\bP\big[\abs{X_{n_k}-X_{m_k}}\one_A]>\delta\quad\text{ for any }k\in\N.
\end{align}
On the other hand, clearly, $\big(X_{n_k}-X_{m_k}\big)_k$ converges to $0$ in probability, and hence so does the sequence $\big(\frac{\abs{X_{n_k}-X_{m_k}}}{2}\big)_k$.
Note that
\[ 0 \leq \frac{1}{2}\abs{X_{n_k}-X_{m_k}} \leq \frac{1}{2}(X_{n_k}+X_{m_k}) \in \cK.\]
Thus $\frac{\abs{X_{n_k}-X_{m_k}}}{2} \in \cK$, due to the positive solidity of $\cK$.
The choice of the set $A$ yields that $\E_\bP\big[\abs{X_{n_k}-X_{m_k}}\one_A]\longrightarrow0$, contradicting \eqref{finaleq}.
\end{proof}

\end{document}